\newtheorem{thm}{Theorem}
\newtheorem{lem}[thm]{Lemma}
\newtheorem{prop}[thm]{Proposition}
\newtheorem{cor}[thm]{Corollary}
\theoremstyle{definition}
\newtheorem{rem}[thm]{Remark}
\DeclareMathOperator{\Pe}{\mathcal{P}} 
\DeclareMathOperator{\Emp}{\mathfrak{m}} 
\newcommand{\dbar}{\bar{d}}
\DeclareMathOperator{\dist}{dist}
\newcommand{\di}{D_P}
\DeclareMathOperator{\sint}{\overline{\kern-5pt\int}} 
\DeclareMathOperator{\m}{\mathfrak m}
\newcommand{\und}[1]{\underline{#1}}
\newcommand{\om}[1]{\hat{\omega}_{\mathcal F}({#1})}
\newcommand{\Folner}{F{\o}lner }
\renewcommand{\phi}{\varphi}
\DeclareMathOperator{\M}{\mathcal{M}}
\DeclareMathOperator{\F}{\mathcal{F}}
\newcommand{\HD}{H}
\DeclareMathOperator{\Bl}{\mathcal{B}}
\DeclareMathOperator{\lang}{\Bl}
\newcommand{\alf}{\mathscr{A}}
\newcommand{\htop}{h_\textrm{top}}
\DeclareMathOperator{\Fin}{Fin}
\newcommand{\DW}{D_W}
\def\ocirc#1{\ifmmode\setbox0=\hbox{$#1$}\dimen0=\ht0
    \advance\dimen0 by1pt\rlap{\hbox to\wd0{\hss\raise\dimen0
    \hbox{\hskip.2em$\scriptscriptstyle\circ$}\hss}}#1\else
    {\accent"17 #1}\fi}
\newcommand{\eps}{\varepsilon}
\newcommand{\R}{\mathbb{R}}
\newcommand{\N}{\mathbb{N}}
\author{Martha {\L}{\k{a}}cka \and Marta Straszak}
\address[M. {\L}{\c{a}}cka]{
Faculty of Mathematics and Computer Science, Jagiellonian University in Krakow, ul. \L o\-jasiewicza 6, 30-348 Krak\'ow, Poland}\email{martha.ubik@gmail.com}
\urladdr{www2.im.uj.edu.pl/MarthaLacka/}
\address[M. Straszak]{
Faculty of Mathematics and Computer Science, Jagiellonian University in Krakow, ul. \L o\-jasiewicza 6, 30-348 Krak\'ow, Poland}\email{marta.pietrzyk@doctoral.uj.edu.pl}
\title[Quasi-uniform Convergence...]{Quasi-uniform Convergence in Dynamical Systems\\Generated by an Amenable Group Action}
\date{\today}
\begin{document}
\maketitle
\begin{abstract}
We study the Weyl pseudometric associated with an action of a countable amenable group on a compact metric space. We prove that the topological entropy and the number of minimal subsets of the closure of an orbit are both lower semicontinuous with respect to the Weyl pseudometric.
Furthermore, the simplex of invariant measures supported on the orbit closure varies continuously.
We apply the Weyl pseudometric to Toeplitz configurations for arbitrary amenable residually finite groups. We introduce the notion of a regular Toeplitz configuration and demonstrate that all regular Toeplitz configurations define minimal and uniquely ergodic systems. We prove that this family is path-connected in the Weyl pseudometric. This leads to a new proof of a theorem of F.~Krieger, saying that Toeplitz configurations can have arbitrary finite entropy.
\end{abstract}

\section{Introduction}
Given a dynamical system one may measure the distance between points by computing an averaged distance along their orbits. This defines a pseudometric and it is expected that it may be useful to describe the dynamics of the system.
An example is the \emph{Weyl pseudometric} 
\[D_W(x,y)=\limsup\limits_{N\to\infty}\frac{1}{N}\sup_{k\in\mathbb Z}\sum_{i=0}^{N-1}\rho(T^{k+i}(x),T^{k+i}(y)),\]
where $x,y\in X$, $(X,\rho)$ is a compact metric space and $T\colon X\to X$ is a homeomorphism.
We call the convergence with respect to $D_W$ the \emph{quasi-uniform convergence}.
This term was introduced by Jacobs and Keane in \cite{JK}, who proved that if $\{x_n\}_{n\in\N}$ tends to $x$ in $D_W+\rho$ and all $x_n$'s are strictly transitive (that is there is only one Borel probability invariant measure on the closure of its orbit), then so is $x$. Further dynamical consequences of the quasi-uniform convergence were studied by Downarowicz and Iwanik \cite{DI}, Blanchard, Formenti and Kůrka \cite{BFK}, and Salo and T{\"o}rm{\"a} \cite{ST}.

The  Weyl pseudometric can be extended to amenable group actions. We study the properties of the quasi-uniform convergence in this wider setting. We prove that the topological entropy and the number of minimal subsystems of the closure of an orbit of a point are both lower semicontinous functions in the Weyl pseudometric.
We show that the simplices of invariant measures supported on orbit closures vary continuously (when we endow the family of all non-empty compact sets of invariant measures with a natural topology). We also prove that for shift actions the entropy of the orbit closure is a $D_W$-continuous function. This extends results from \cite{DI} to amenable group actions.

We apply our results to systems defined as the closure of an orbit of a Toeplitz configuration over $G$, which (for $G=\mathbb Z$) were introduced by Jacobs and Keane in~\cite{JK} and have already proved to have numerous remarkable properties \cite{Williams}. Using Toeplitz configurations one can construct for instance strictly ergodic systems with positive entropy or minimal systems which are not uniquely ergodic \cite{Williams}. 

Toeplitz configurations for amenable residually finite group actions have also been studied extensively \cite{CortezPetite, Cortez, KriegerFR, Krieger}. Here we use the Weyl pseudometric to introduce regular Toeplitz configurations for such groups actions and show that they generate strictly ergodic systems with zero topological entropy.

We also show that the family of Toeplitz configurations is path-connected with respect to the Weyl pseudometric. This enables us to establish paths of Toeplitz configurations with continuously varying entropy. Thanks to that we prove the Krieger theorem \cite[Theorem 1.1]{KriegerFR} stating that if $G$ is a residually finite countable amenable group then for any number $t$ in $[0,\log k)$ there exists a Toeplitz configuration over $k$-letter alphabet with entropy $t$. This in particular means that there are uncountably many pairwise not conjugated Toeplitz dynamical systems.

\section{Notation, Basic Definitions and Setting}
Throughout this paper $G$ denotes a discrete countable amenable\footnote{We omit this assumption at the beginning of Section~\ref{section7}.} group with an identity element $e$, $(X,\rho)$ is a compact metric space with diameter bounded by $1$ and $\Fin(G)$ is the family of all non-empty finite subsets of $G$. If $G$ acts on $X$, that is, there is a continuous map $G\times X\to X$ such that $(e,x)=x$ for every $x\in X$, and $(g,(h,x))=(gh,x)$ for every $g,h\in G$ and $x\in X$, then we call the pair $(X,G)$ a dynamical system. 


We characterize countable amenable groups using \Folner sequences.  Recall that a sequence $\{F_n\}_{n\in\N}\subset\Fin(G)$ is a \emph{(left)\footnote{We used the adjective to distinguish \emph{left} \Folner sequences from their right- and two-sided analogues. In a similar vain notions related to non-commutative groups have ,,left'' and ,,right'' variants.  For brevity, we will use adjectives ,,left/right'' only in definitions and routinely omit them later, since the choice of ,,left/right'' is fixed throughout the paper.
} F{\o}lner sequence} if
\[
\lim_{n\to\infty}|gF_n\triangle F_n|/|F_n|=0\quad\text{for every $g\in G$.}
\]
We say that $G$ is \emph{amenable}, if it admits a (left) F{\o}lner sequence.
Given $F,K\in\Fin(G)$ and $\eps>0$ we say that $F$ is \emph{$(K,\eps)$-invariant} if $|KF\triangle F|\leq\eps|F|$. Note that if $\mathcal F=\{F_n\}_{n\in\N}$ is a F{\o}lner sequence, then for every $\eps>0$ and $K\in\Fin(G)$ there exists $N\in\N$ such that $F_n$ is $(K,\eps)$-invariant for every $n\geq N$.
By $\F$ we always denote a \Folner sequence and if we write $ \sup_{\mathcal{F}}$ or $\cup_{\mathcal{F}}$, we mean that the supremum or the union is taken over all \Folner sequences.


Let $X^G$ be the set of all  functions from $G$ to $X$.
For any $x\in X$ we denote by $Gx$  the \emph{orbit} of $x$, that is $Gx=\{gx:{g\in G}\}$, and
by $\und{x}_G\in X^G$, the \emph{trajectory} of $x$ i.e. $\{\und{x}_G\}_g=gx$.

Let $\M(X)$ be the set of Borel probability measures on $X$ equipped with the weak$\star$ topology given by the compatible \emph{Prokhorov metric} 
\[
\di(\mu,\nu)=\inf\{\eps>0\,:\,\text{for every Borel set }B\subset X\text{ one has }\mu(B)\leq\nu(B^{\eps})+\eps\},
\]
where $B^{\eps}=\{y\in X\,:\,\text{there exists }x\in B\text{ such that }\rho(x,y)<\eps\}$ and $\mu,\nu\in\M(X)$.
Thus $(\M(X), \di)$ is a compact metric space. Let $\M_G(X)\subseteq\M(X)$ be the simplex of $G$-invariant measures on $X$. Amenability of $G$ implies that $\M_G(X)$ is non-empty. A measure $\mu\in\M_G(X)$ is ergodic if for every $G$-invariant set $A\subset X$ one has either $\mu(A)=0$ or $\mu(A)=1$. We denote by $\M_G^e(X)\subseteq\M_G(X)$ the family of all ergodic measures. A point $x\in X$ is \emph{generic} for $\mu\in\M_G(X)$ along $\mathcal F=\{F_n\}_{n\in\N}$ if 
\[\frac{1}{|F_n|}\sum_{f\in F_n}\hat\delta_{fx}\to\mu\text{ as }n\to\infty,\]
where $\hat\delta_{fx}$ denotes the Dirac measure supported at $fx$.
A $\mu$-generic point $x\in X$ is \emph{regular} if it belongs to the support of $\mu$. 

A \Folner sequence $\mathcal F=\{F_n\}_{n=1}^{\infty}$ is \emph{tempered} if for some $C>0$ and all $n\in\N$ one has
$|\bigcup_{k\leq n}F_k^{-1}F_{n+1}|\leq C\cdot\left|F_{n+1}\right|$.
Every F{\o}lner sequence has a~tempered subsequence \cite[Proposition 1.5.]{Lindenstrauss01}. It follows from the pointwise ergodic theorem for amenable group actions \cite[Theorem 1.3.]{Lindenstrauss01} that given a tempered F{\o}lner sequence $\mathcal F$ and $\mu\in\M_G^e(X)$ one can find a generic point for $\mu$ along $\mathcal F$.

The Hausdorff distance $\HD$ between two non-empty compact subsets $A,B$ of a metric space $(X,\rho)$ is given by
\[\HD(A,B)=\max\big\{\inf\{\eps>0\,:A\subset B^{\eps}\},\inf\{\eps>0\,:B\subset A^{\eps}\}\big\}.\]
We consider $\M(X)$ as a metric space with the Hausdorff distance.

Recall that a pseudometric on a set $Y$ is a function $p\colon Y\times Y\to\mathbb R_+$ which obeys the triangle inequality and is symmetric.  We say that $f\colon (Y,p)\to \mathbb R\cup\{\infty\}$ is \emph{lower semicontinuous} if whenever $p(x_n,x)\to 0$ as $n\to\infty$ one has $\liminf\limits_{n\to\infty}f(x_n)\geq f(x)$. 

We say that a set $S\subset G$ is \emph{(left) syndetic} if there exists $F\in\Fin(G)$ such that $FS=G$. A set $T\subset G$ is \emph{(right) thick} if for every $F\in\Fin(G)$ there exists $\gamma\in T$ such that $F^{-1}\gamma\subset T$. The properties of being syndetic and thick are dual:
\begin{itemize}[leftmargin=*]
\item  $S\subset G$ is left syndetic if and only if $S\cap T\neq\emptyset$ for every right thick  $T\subset G$.
\item $T\subset G$ is right thick if and only if $T\cap S\neq\emptyset$ for every left syndetic $S\subset G$.
\end{itemize}
In this paper by \emph{syndetic} and \emph{thick} we always mean \emph{left syndetic} and \emph{right thick}.

A countable group $G$ is \emph{residually finite} if there exists a nested sequence of finite index normal subgroups whose intersection is trivial. We write $H<_f G$ ($H\triangleleft_f G$) if $H$ is a finite index (normal) subgroup of $G$.

A point $x\in X$ is called \emph{transitive} if the set $Gx$ is dense in $X$. We say that $(X,G)$ is \emph{minimal} if every point in $X$ is transitive.

Let $\alf$ be a finite discrete space and $\alf^G$ be a product space. By $(\alf^G,G)$ we mean a dynamical system determined by the shift action $(h,(x_g)_{g\in G})\mapsto (x_{gh})_{g\in G}.$
For $a\in\alf$ let $[a]=\{\und x\in\alf^G\,:\,x_e=a\}$. We equip $\alf^G$ with a compatible metric such that $\dist([a],[b])>1/2$ for $a\neq b$.

Let $\Psi$ be a family of functions from a (pseudo)metric space $(Y,p_Y)$ to a (pseudo)\-metric space $(Z,p_Z)$. We say that $\Psi$ is \emph{uniformly equicontinuous} if for every $\eps>0$ there exists a function $\mathbb R_+\ni\eps\to \delta(\eps)\in\mathbb R_+$ such that for every $f\in\Psi$ and all $y_1,y_2\in Y$ satisfying $p_Y(y_1,y_2)<\delta(\eps)$ one has $p_Z(f(y_1), f(y_2))<\eps$. We call the function $\delta$ a \emph{modulus of equicontinuity} of $\Psi$. If $\{\Psi(k)\,:\,k\in\mathcal K\}$ is a family of families of functions, we say that \emph{modulus of equicontinuity do not depend on $k\in\mathcal K$} if there exists a function $\delta$ which is a modulus of equicontinuity of $\Psi(k)$ for every $k\in\mathcal K$. Analogously we define a \emph{modulus of uniform equivalence} of (pseudo)metrics.
\section{Besicovitch Pseudometric and Weyl Pseudometric}
The main aim of this section is Theorem~\ref{tog} in which we express the Weyl pseudometric in several equivalent ways. But first we recall some notions useful in the further considerations.
\subsection{Densities}
For $F\in\Fin(G)$ and $A\subset G$ define
\[
D_F(A)=|A\cap F|/|F|, \quad D_{*}^F(A)=\inf_{g\in G} D_{Fg}(A)\quad\text{and}\quad D^*_F(A)=\sup_{g\in G} D_{Fg}(A).
\]
The \emph{upper asymptotic density} of $A$ with respect to $\mathcal F=\{F_n\}_{n\in\N}$ is given by
\[\overline{d}_{\mathcal F}(A)=\limsup_{N\to\infty}D_{F_N}(A).\]
The \emph{upper Banach density} of $A$ is defined as $D^*(A)=\inf\{D^*_F(A)\,:\,F\in\Fin(G)\}$.
If $\mathcal F=\{F_n\}_{n\in\N}$ is a \Folner sequence, then we have (see \cite[Lemma 2.9]{DHZ})
\[
D^*(A)=\lim_{n\to\infty} D^*_{F_n}(A).
\]
In particular, the above limit exists and does not depend on the choice of $\mathcal F$. We also have \cite[Lemma 3.3]{BBF} that
\[
D^*(A) = \sup_{\F} \limsup_{n\to\infty}D_{F_n}(A).
\]
The \emph{lower Banach density} of $A$ is given by
$D_*(A)=1-D^*(G\setminus A)$. Note that \begin{multline*}
D_*(A)=\sup\big\{D_*^F(A)\,:\,F\in\Fin(G)\big\}=\lim\limits_{n\to\infty}D^{F_n}_*(A)=\\=\inf\left\{\limsup\limits_{n\to\infty}D_{H_n}(A)\,:\,\{H_n\}_{n\in\N}\text{ is a \Folner sequence}\right\},
\end{multline*}
where $\mathcal F=\{F_n\}_{n\in\N}$ is any \Folner sequence.
If $D_*(A)=D^*(A)$, then we say that $A$ has the \emph{Banach density} $D(A)=D^*(A)$.
\subsection{Besicovitch Pseudometric}
This way of measuring distance between trajectories is motivated by a notion used by Besicovitch in his studies of almost periodic functions. The Besicovitch pseudometric appeared also in \cite{Auslander59,Fomin,FGJ,Oxtoby52}.

The Besicovitch pseudometric on $X^{G}$ along $\mathcal F=\{F_n\}_{n\in\N}$, denoted by $D_{B,\mathcal F}$, is defined for $\und{x}=\{x_g\}_{g\in G}$, $\underline{x}'=\{x'_g\}_{g\in G}\in X^{G}$ by
	\[
D_{B,\mathcal F}(\underline x,\underline x')=\limsup_{N\to\infty}\frac{1}{|F_N|}\sum_{g\in F_N}\rho(x_g,x_g').
	\]
For $x,x'\in X$ we define $D_{B,\mathcal F}(x,x')$ to be equal the $D_{B,\mathcal F}$-distance between trajectories of $x$ and $x'$.





Repeating the proof of \cite[Lemma 2]{KLOg} we get the following:
\begin{lem}\label{lem:db-prim}
Fix $\mathcal F$ and define
$D'_B$ on $X^G$ as
\[
D'_{B,\mathcal F}(\und{x}_G,\und{z}_G)=\inf
\{\delta>0: \dbar_{\mathcal F}(\{g\in G: \rho(x_g,z_g)\ge \delta\})<\delta\}.
\]
Then $D_{B,\mathcal F}$ and $D_{B,\mathcal F}'$ are uniformly equivalent on $X^G$. Moreover, the modulus of uniform equivalence does not depend on the choice of a F{\o}lner sequence.
\end{lem}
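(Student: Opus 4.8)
The plan is to establish the two one‑sided estimates
\[
D_{B,\mathcal F}(\underline x,\underline z)\le 2\,D'_{B,\mathcal F}(\underline x,\underline z)
\qquad\text{and}\qquad
D'_{B,\mathcal F}(\underline x,\underline z)\le \sqrt{D_{B,\mathcal F}(\underline x,\underline z)}
\]
for all $\underline x=\{x_g\}_{g\in G}$ and $\underline z=\{z_g\}_{g\in G}$ in $X^G$, with the constant $2$ and the function $t\mapsto\sqrt t$ visibly independent of $\mathcal F$. Throughout the proof I abbreviate $A_\delta=\{g\in G:\rho(x_g,z_g)\ge\delta\}$ and use the standing assumption that $\rho\le 1$ on $X\times X$, together with the identity $\overline d_{\mathcal F}(A)=\limsup_{N\to\infty}D_{F_N}(A)$.

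For the first inequality, set $d=D'_{B,\mathcal F}(\underline x,\underline z)$ and fix any $\delta>d$. By definition of the infimum defining $D'_{B,\mathcal F}$, there is $\delta'\in(0,\delta)$ with $\overline d_{\mathcal F}(A_{\delta'})<\delta'$. Splitting the average over $F_N$ according to whether $g\in A_{\delta'}$, where $\rho(x_g,z_g)\le 1$, or $g\notin A_{\delta'}$, where $\rho(x_g,z_g)<\delta'$, yields
\[
\frac{1}{|F_N|}\sum_{g\in F_N}\rho(x_g,z_g)\le D_{F_N}(A_{\delta'})+\delta'.
\]
Taking $\limsup_{N\to\infty}$ gives $D_{B,\mathcal F}(\underline x,\underline z)\le \overline d_{\mathcal F}(A_{\delta'})+\delta'<2\delta'<2\delta$, and letting $\delta\downarrow d$ finishes this direction.

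For the second inequality, set $c=D_{B,\mathcal F}(\underline x,\underline z)$, fix $\delta>\sqrt c$, and choose $\eta$ with $\sqrt c<\eta<\delta$. Since $\limsup_{N\to\infty}\tfrac{1}{|F_N|}\sum_{g\in F_N}\rho(x_g,z_g)=c<\eta^2$, for all large $N$ we have $\sum_{g\in F_N}\rho(x_g,z_g)<\eta^2|F_N|$, and Chebyshev's inequality gives
\[
|A_\delta\cap F_N|\le \frac{1}{\delta}\sum_{g\in F_N}\rho(x_g,z_g)<\frac{\eta^2}{\delta}\,|F_N|.
\]
Hence $\overline d_{\mathcal F}(A_\delta)\le \eta^2/\delta<\delta$ (using $\eta<\delta$), so $\delta$ belongs to the set defining $D'_{B,\mathcal F}$, whence $D'_{B,\mathcal F}(\underline x,\underline z)\le\delta$; letting $\delta\downarrow\sqrt c$ completes the estimate.

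Combining the two inequalities, $D'_{B,\mathcal F}<\eps$ is implied by $D_{B,\mathcal F}<\eps^2$ and $D_{B,\mathcal F}<\eps$ is implied by $D'_{B,\mathcal F}<\eps/2$, so both pseudometrics are uniformly equivalent with the common modulus $\delta(\eps)=\min\{\eps/2,\eps^2\}$, which does not involve $\mathcal F$. The argument is routine; the only point demanding a little care is the strict inequalities built into the definition of $D'_{B,\mathcal F}$ and into the $\limsup$ defining $D_{B,\mathcal F}$, which is why one passes through the auxiliary parameters $\delta'$ and $\eta$ lying strictly between the relevant values rather than using the extremal ones directly.
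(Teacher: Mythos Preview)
Your proof is correct and follows the standard route—splitting the average according to $A_{\delta'}$ for one direction and a Chebyshev/Markov estimate for the other—which is precisely the argument the paper defers to by citing \cite[Lemma~2]{KLOg} rather than writing it out. The explicit bounds $D_{B,\mathcal F}\le 2D'_{B,\mathcal F}$ and $D'_{B,\mathcal F}\le\sqrt{D_{B,\mathcal F}}$, and the care with the auxiliary parameters $\delta',\eta$ to handle the strict inequalities, are exactly what is needed.
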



\begin{cor}
Let $\tilde\rho$ be a compatible metric on $X$ and $\tilde D_{B,\mathcal F}$ be defined as $D_{B,\mathcal F}$ above with $\tilde\rho$  in place of $\rho$. Then
$\tilde D_{B,\mathcal F}$ and $D_{B,\mathcal F}$ are uniformly equivalent on $X^G$. 
\end{cor}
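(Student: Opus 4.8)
The plan is to reduce the statement to the elementary fact that any two compatible metrics on a compact space are uniformly equivalent as metrics on $X$, and then to transfer this uniform equivalence to the averaged pseudometrics. First I would record that, since $X$ is compact, the identity map from $(X,\rho)$ to $(X,\tilde\rho)$ and its inverse are both uniformly continuous; hence there is a function $\eps\mapsto\delta(\eps)\in(0,\eps]$ such that $\rho(x,y)<\delta(\eps)$ implies $\tilde\rho(x,y)<\eps$ and $\tilde\rho(x,y)<\delta(\eps)$ implies $\rho(x,y)<\eps$, for all $x,y\in X$. Compactness also supplies a constant $M$ with $\tilde\rho\le M$ on $X\times X$.

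Next, fix $\eps>0$ and $\und x=\{x_g\}_{g\in G}$, $\und z=\{z_g\}_{g\in G}$ in $X^G$. For every $g\in G$ one has the pointwise estimate
\[
\tilde\rho(x_g,z_g)\le\eps+M\cdot\mathbf 1_{\{h\in G\,:\,\rho(x_h,z_h)\ge\delta(\eps)\}}(g),
\]
because $\rho(x_g,z_g)<\delta(\eps)$ forces $\tilde\rho(x_g,z_g)<\eps$, while always $\tilde\rho(x_g,z_g)\le M$. I would average this over $F_N$, bound the density of the ``bad'' set by Chebyshev's inequality,
\[
\frac{|\{g\in F_N\,:\,\rho(x_g,z_g)\ge\delta(\eps)\}|}{|F_N|}\le\frac{1}{\delta(\eps)}\cdot\frac{1}{|F_N|}\sum_{g\in F_N}\rho(x_g,z_g),
\]
and then take the $\limsup$ over $N$, obtaining
\[
\tilde D_{B,\mathcal F}(\und x,\und z)\le\eps+\frac{M}{\delta(\eps)}\,D_{B,\mathcal F}(\und x,\und z).
\]
Hence $D_{B,\mathcal F}(\und x,\und z)<\delta(\eps)\eps/M$ implies $\tilde D_{B,\mathcal F}(\und x,\und z)<2\eps$, and exchanging the roles of $\rho$ and $\tilde\rho$ gives the reverse implication; this is exactly uniform equivalence, with a modulus manufactured from $\delta$ and $M$ alone.

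There is essentially no serious obstacle here: the argument is a routine combination of uniform continuity on a compact space with a Chebyshev-type estimate. The only points one must not overlook are that $\tilde\rho$ need not have diameter at most $1$, so the constant $M=\diam_{\tilde\rho}(X)$ has to be carried through the computation, and that every inequality above is uniform in $N$, so the resulting modulus of uniform equivalence is automatically independent of the F{\o}lner sequence $\mathcal F$. Alternatively, one could route the same argument through Lemma~\ref{lem:db-prim}: the pseudometric $D'_{B,\mathcal F}$ depends on the underlying metric only through the super-level sets $\{g\,:\,\rho(x_g,z_g)\ge\delta\}$, and the inclusions between the $\rho$- and $\tilde\rho$-super-level sets furnished by uniform equivalence of the two metrics translate at once into uniform equivalence of $D'_{B,\mathcal F}$ and $\tilde D'_{B,\mathcal F}$; Lemma~\ref{lem:db-prim} then transports the conclusion to $D_{B,\mathcal F}$ and $\tilde D_{B,\mathcal F}$.
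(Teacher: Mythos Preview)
Your argument is correct. The paper states the corollary without proof, intending it as an immediate consequence of Lemma~\ref{lem:db-prim}: this is precisely the alternative route you sketch at the end, where one observes that $D'_{B,\mathcal F}$ depends on the metric only through the super-level sets $\{g:\rho(x_g,z_g)\ge\delta\}$, and uniform equivalence of $\rho$ and $\tilde\rho$ on the compact space $X$ immediately gives inclusions between these sets with a shift in the threshold.

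Your main argument via the pointwise bound and Chebyshev is a genuinely different, more self-contained route: it bypasses the auxiliary pseudometric $D'_{B,\mathcal F}$ entirely and works directly with the averages. What you gain is independence from Lemma~\ref{lem:db-prim}; what the paper's route buys is brevity, since once Lemma~\ref{lem:db-prim} is in place the corollary really is a one-line observation. Both approaches yield a modulus of uniform equivalence that depends only on the uniform-continuity data between $\rho$ and $\tilde\rho$ (and the $\tilde\rho$-diameter $M$), hence is independent of $\mathcal F$, which you correctly note.
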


\subsection{Weyl Pseudometric}
After some preparations we characterize Weyl pseudometric and its relatives in Theorem~\ref{tog}.

For $\und{x},\und{x}' \in X^G$ and $F \in \Fin(G)$ define
\[
\Delta_F(\und{x},\und{x}') = \sum_{f\in F} \rho (\und{x}_f,\und{x}_f')\quad\text{and}\quad\Delta_F^*(x,x') = \sup_{g \in G} \Delta_{Fg}(\und{x},\und{x}').
\]
Given $\F=\{F_n\}_{n=1}^\infty$ we define the \emph{(right) Weyl pseudometric} on $X^G$ as
\[D_W\big(\und{x},\und{z}\big)=\limsup_{n\to\infty}\frac{1}{|F_n|}\sup_{g\in G}\sum_{f\in F_ng}\rho(x_f,z_f)=\limsup\limits_{n\to\infty}\frac{1}{|F_n|}\Delta_{F_n}^{*}(\underline x,\underline z).\]

\begin{rem}
It follows from Lemma~\ref{lS} below that $D_W(x,x')$ does not depend on the choice of $\mathcal F$ and by Corollary~\ref{limitexists} the upper limit above is a limit.
\end{rem}

The Weyl pseudometric induces a pseudometric on $(X,G)$, namely given $x,z\in X$ we set $\DW(x,z)=D_W(\und{x}_G,\und{z}_G)$. The convergence in $D_W$ on $(X,G)$ is called the \emph{quasi-uniform  convergence}. 

For a yet another characterization of Weyl pseudometric we need some machinery from \cite{DFR}.

A \emph{$k$-cover} $(k\in\N)$ of a set $F\in \Fin(G)$ is a tuple $(K_1,\ldots,K_r)$ of elements of $\Fin(G)$ such that for each $g\in F$ the set $\{ 1 \leq i \leq r \colon g\in K_i\}$ has at least $k$ elements.
A function $H\colon\Fin(G)\cup\{\emptyset\} \to [0,\infty)$:
\begin{itemize}[leftmargin=*]
\item \emph{satisfies Shearer's inequality} (see \cite{DFR}) if for any $F\in \Fin(G)$ and any $k$-cover $(K_1,\ldots,K_r)$ of $F$, we have
$H(F) \leq (1/ k) ( H(K_1)+\ldots + H(K_r))$,
\item  is \emph{$G$-invariant} if $H(Fg) = H(F)$ for every $g\in G$ and $F\in \Fin(G)$,
\item is \emph{monotone} if for all $A, B\in \Fin(G)$ with $A\subset B$ one has $H(A)\leq H(B)$,
\item is \emph{subadditive} if for all $A, B\in \Fin(G)$ with $A\cap B=\emptyset$ one has \[H(A\cup B)\leq H(A)+H(B).\]
\end{itemize}

\begin{lem}\label{lS}
If $\und{x},\und{x}' \in X^G$ and the function $H\colon \Fin(G) \to [0,\infty)$ is given by \newline
$H(F) = \Delta_F^*(\und{x},\und{x}')$,  then $H$ is $G$-invariant, satisfies Shearer's inequality and obeys the \emph{infimum rule}, that is, for each F{\o}lner sequence $\mathcal F=\{F_n\}_{n\in\N}$ one has
\[
D_W\big(\und{x},\und{x'}\big)=\limsup_{n\to\infty} H(F_n)/|F_n| = \inf_{F\in \Fin(G)} H(F)/|F|.
\]
\end{lem}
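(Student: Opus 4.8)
The plan is to verify the three properties of $H$ in turn and then invoke the abstract "infimum rule" machinery from \cite{DFR}. First, $G$-invariance of $H$ is immediate: since $\Delta_{Fh}^*(\und x,\und x') = \sup_{g\in G}\Delta_{Fhg}(\und x,\und x') = \sup_{g'\in G}\Delta_{Fg'}(\und x,\und x')$ (reindex $g' = hg$), we get $H(Fh) = H(F)$. Second, for Shearer's inequality, let $(K_1,\dots,K_r)$ be a $k$-cover of $F\in\Fin(G)$. For each fixed $g\in G$, the tuple $(K_1 g,\dots,K_r g)$ is a $k$-cover of $Fg$, so every element of $Fg$ lies in at least $k$ of the $K_i g$; hence $\sum_{i=1}^r \mathbf 1_{K_i g}(f) \ge k$ for every $f\in Fg$. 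Multiplying by $\rho(x_f,z_f)\ge 0$ and summing over $f\in Fg$ gives
\[
k\,\Delta_{Fg}(\und x,\und x') \le \sum_{i=1}^r \sum_{f\in K_i g}\rho(x_f,z_f) = \sum_{i=1}^r \Delta_{K_i g}(\und x,\und x') \le \sum_{i=1}^r H(K_i),
\]
where the last inequality uses the definition of $H(K_i)$ as a supremum over translates. Taking the supremum over $g\in G$ on the left yields $k\,H(F)\le \sum_{i=1}^r H(K_i)$, which is Shearer's inequality.

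Next I would note that $H$ is monotone and subadditive: monotonicity is clear since $A\subset B$ implies $\Delta_{Ag}\le \Delta_{Bg}$ for each $g$, and subadditivity for disjoint $A,B$ follows from $\Delta_{(A\cup B)g} = \Delta_{Ag} + \Delta_{Bg} \le H(A)+H(B)$ after taking the supremum over $g$. Now the key input is the general fact — this is where \cite{DFR} does the work — that a $G$-invariant function $H$ on $\Fin(G)$ satisfying Shearer's inequality (one also needs $H$ finite-valued and, say, monotone, which holds here) obeys the infimum rule: for every \Folner sequence $\{F_n\}$,
\[
\lim_{n\to\infty} \frac{H(F_n)}{|F_n|} = \inf_{F\in\Fin(G)} \frac{H(F)}{|F|}.
\]
In particular the limit exists and is independent of the \Folner sequence. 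Since by definition $D_W(\und x,\und x') = \limsup_{n\to\infty} H(F_n)/|F_n|$, combining these gives both displayed equalities in the statement, and simultaneously justifies the Remark that $D_W$ does not depend on $\mathcal F$.

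The main obstacle, conceptually, is not any of the verifications above — those are short and routine — but rather making sure the hypotheses of the infimum-rule theorem from \cite{DFR} are exactly met. In particular one should confirm that the version of the Ornstein–Weiss / Downarowicz–Frej–Romagnoli lemma being cited applies to real-valued (not just entropy-like) set functions and requires only Shearer's inequality plus $G$-invariance (plus perhaps monotonicity or boundedness by $C|F|$, which here holds with $C=1$ since $\rho\le 1$), rather than full subadditivity in the strong form. I would therefore state precisely which result of \cite{DFR} is invoked and check that $H(F)\le |F|$ supplies whatever normalization/finiteness hypothesis is needed; everything else then follows formally.
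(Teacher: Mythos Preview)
Your proposal is correct and follows essentially the same route as the paper: verify $G$-invariance trivially, prove Shearer's inequality by the same pointwise counting argument (the paper compresses your indicator-function computation into a single chain of inequalities, bounding $\sum_i H(K_i)\ge \sup_g\sum_i\Delta_{K_ig}\ge \sup_g k\,\Delta_{Fg}=kH(F)$), and then invoke \cite[Proposition~3.3]{DFR}. Regarding your concern about hypotheses, the paper cites that proposition as applying to any \emph{$G$-invariant, non-negative} function on $\Fin(G)$ satisfying Shearer's inequality, so neither monotonicity nor the bound $H(F)\le|F|$ is needed---your verification of monotonicity and subadditivity is harmless but unnecessary for this lemma (the paper uses those properties only later, via Theorem~\ref{LW}, to give a second proof that the limit exists).
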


\begin{proof}
It is obvious that $H$ is $G$-invariant. We will show that it satisfies Shearer's inequality.
Let $F\in\Fin(G)$ and let $(K_1,\ldots,K_r)$ be a $k-$cover of $F$. Because every element of $F$ belongs to $K_i$ for at least $k$ indices $1\le i \le r$, we have
\begin{align*}
{1\over k}(H(K_1)+\ldots +H(K_r)) &\geq {1\over k} \sup_{g\in G}(\Delta_{K_1g}(\und{x},\und{x}')+\ldots+\Delta_{K_rg}(\und{x},\und{x}'))
\\
&\geq   {1\over k}\sup_{g\in G}(k \Delta_{Fg}(\und{x},\und{x}')) 
= H(F). 
\end{align*}
By \cite[Proposition 3.3]{DFR} every $G$-invariant, non-negative function on $\Fin(G)$ that satisfies Sharer's inequality, obeys the infimum rule.
\end{proof}

We will need the following theorem which comes from \cite[Theorem 6.1]{LW}.
\begin{thm}\label{LW}
If $H\colon\Fin(G)\to[0,\infty)$ is $G$-invariant, monotone, subadditive and such that $H(\emptyset)=0$, then for any \Folner sequence $\mathcal F=\{F_n\}_{n\in\N}$ the expression $H(F_n)/|F_n|$ converges as $n\to\infty$ and is independent of $\mathcal F$.
\end{thm}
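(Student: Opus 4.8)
The plan is to deduce the statement from the Ornstein--Weiss quasi-tiling machinery. First I would reduce everything to the single inequality
\[
\limsup_{n\to\infty}\frac{H(F_n)}{|F_n|}\le\liminf_{m\to\infty}\frac{H(F'_m)}{|F'_m|}\quad\text{for all F{\o}lner sequences }\mathcal F=\{F_n\},\ \mathcal F'=\{F'_m\}:
\]
specialising to $\mathcal F'=\mathcal F$ gives convergence of $H(F_n)/|F_n|$, and applying the inequality in both directions to a genuine pair of sequences gives independence of the limit. A harmless preliminary reduction is the crude bound $H(F)\le H(\{e\})\,|F|$, obtained by writing $F$ as a disjoint union of singletons and using subadditivity together with $G$-invariance; set $c=H(\{e\})$, so every quantity in sight lies in $[0,c]$.

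Next I would fix $\eps>0$, put $\beta=\liminf_m H(F'_m)/|F'_m|$, and invoke the Ornstein--Weiss quasi-tiling theorem in the following form: for a suitable $N=N(\eps)$ there is a ``tower'' of invariance conditions such that, whenever $T_1,\dots,T_N\in\Fin(G)$ satisfy them, every sufficiently invariant $F\in\Fin(G)$ admits finitely many translates $C_1=T_{i(1)}g_1,\dots,C_L=T_{i(L)}g_L$ which are contained in $F$, are $\eps$-disjoint (each $C_j$ meets the union of the previous ones in at most $\eps|C_j|$ points), and together cover at least $(1-\eps)|F|$ points of $F$. The key observation is that I may take the tiles from $\mathcal F'$ itself: since $\mathcal F'$ is F{\o}lner and $\beta$ is an accumulation value of $H(F'_m)/|F'_m|$, I can choose $m_1<\dots<m_N$ inductively (translating each set so that it contains $e$) so that $T_i:=F'_{m_i}$ obeys the tower conditions \emph{and} $H(T_i)/|T_i|\le\beta+\eps$ for every $i$.

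Finally I would run the counting argument. Apply the quasi-tiling to $F=F_n$ with $n$ so large that $F_n$ is sufficiently invariant, and write $C_j'=C_j\setminus(C_1\cup\dots\cup C_{j-1})$, $R=F_n\setminus\bigcup_jC_j'$; then the $C_j'$ are pairwise disjoint, $|R|\le\eps|F_n|$, and $|C_j|\le(1-\eps)^{-1}|C_j'|$. Iterating subadditivity, then using monotonicity and $G$-invariance ($H(C_j')\le H(C_j)=H(T_{i(j)})$) and the bound $H(R)\le c|R|$, and finally the choice of the $T_i$,
\[
H(F_n)\le\sum_{j=1}^{L}H(C_j')+H(R)\le\sum_{j=1}^{L}H(T_{i(j)})+c|R|\le(\beta+\eps)\sum_{j=1}^{L}|C_j|+c\eps|F_n|,
\]
and since $\sum_j|C_j|\le(1-\eps)^{-1}\sum_j|C_j'|\le(1-\eps)^{-1}|F_n|$ this gives $H(F_n)/|F_n|\le(\beta+\eps)/(1-\eps)+c\eps$. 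Letting $n\to\infty$ and then $\eps\to0$ yields the required inequality.

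The genuinely hard step is the quasi-tiling theorem itself; everything after it is bookkeeping with subadditivity, monotonicity and invariance, so in practice one simply cites the quasi-tiling theorem of Ornstein and Weiss, as \cite{LW} do. One minor point that needs care is the matching of one-sided conventions: the F{\o}lner sequences here are left F{\o}lner while $H$ is invariant under right translations, so one should use the version of quasi-tiling that tiles by right translates (equivalently, replace $\{F_n\}$ by $\{F_n^{-1}\}$ and $H$ by $A\mapsto H(A^{-1})$), which is routine.
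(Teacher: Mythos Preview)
Your proof is correct, and in fact it is essentially the argument of Lindenstrauss and Weiss. Note, however, that the paper does not give its own proof of this theorem: it is stated as a citation of \cite[Theorem~6.1]{LW} and used as a black box. So there is nothing to compare your argument against within the paper itself; your sketch simply unpacks the content of the cited reference, and the Ornstein--Weiss quasi-tiling step you single out as ``the genuinely hard step'' is precisely what \cite{LW} also invoke.
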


\begin{cor}\label{limitexists}
The upper limit in the definition of the Weyl pseudometric is a limit.
\end{cor}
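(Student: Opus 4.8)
The plan is to deduce the statement directly from Theorem~\ref{LW} applied to the function $H\colon\Fin(G)\cup\{\emptyset\}\to[0,\infty)$ given by $H(F)=\Delta_F^*(\und{x},\und{x}')$, where $\und{x},\und{x}'\in X^G$ are fixed. By Lemma~\ref{lS} we already know that $H$ is $G$-invariant, so it remains only to check that $H$ is monotone, subadditive, and satisfies $H(\emptyset)=0$; then Theorem~\ref{LW} tells us that $H(F_n)/|F_n|$ converges as $n\to\infty$ for every \Folner sequence $\mathcal F=\{F_n\}_{n\in\N}$ (with limit independent of $\mathcal F$), which is precisely the assertion that the upper limit in the definition of $D_W(\und{x},\und{x}')$ is a limit.

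The three verifications are elementary and rest on the fact that $\rho\ge 0$ and that right translation $A\mapsto Ag$ is a bijection of $G$ preserving inclusions, disjointness, and unions. First, $H(\emptyset)=\sup_{g\in G}\sum_{f\in\emptyset}\rho(x_f,x_f')=0$. Next, if $A\subset B$ in $\Fin(G)$, then $Ag\subset Bg$ for every $g$, so $\Delta_{Ag}(\und{x},\und{x}')\le\Delta_{Bg}(\und{x},\und{x}')$ because the summands are nonnegative; taking the supremum over $g\in G$ yields $H(A)\le H(B)$. Finally, if $A,B\in\Fin(G)$ are disjoint, then $(A\cup B)g=Ag\sqcup Bg$, hence $\Delta_{(A\cup B)g}(\und{x},\und{x}')=\Delta_{Ag}(\und{x},\und{x}')+\Delta_{Bg}(\und{x},\und{x}')\le H(A)+H(B)$ for every $g$, and taking the supremum over $g$ gives $H(A\cup B)\le H(A)+H(B)$.

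There is no real obstacle here: the content is entirely carried by Theorem~\ref{LW}, and the work above is a routine check of its hypotheses. I would also remark that the same conclusion reproves the independence of $D_W(\und{x},\und{x}')$ from the choice of \Folner sequence announced earlier, since Theorem~\ref{LW} asserts that the limiting value does not depend on $\mathcal F$; alternatively, this independence also follows from the infimum rule in Lemma~\ref{lS}, which identifies the limit with $\inf_{F\in\Fin(G)}H(F)/|F|$.
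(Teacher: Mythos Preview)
Your proof is correct and follows exactly the approach intended by the paper: the corollary is placed immediately after Theorem~\ref{LW} and is meant as its direct application to $H(F)=\Delta_F^*(\und{x},\und{x}')$, with the paper leaving the routine verifications of monotonicity, subadditivity, and $H(\emptyset)=0$ implicit. Your write-up simply makes these checks explicit, and they are all correct.
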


The proof of the next lemma follows the same lines as \cite[Lemma 3.3]{BBF}.

\begin{lem}\label{ano}
For any $\und{x},\und{x}'\in X^G$ one has
\[
D_W\big(\und{x},\und{x'}\big)=\sup_{\F}D_{B,\mathcal F}(\und x, \und{x'})=\sup_{\F}\limsup_{n\to \infty} {1\over | F_n|} \sum_{f\in F_n} \rho(x_f,x'_f).
\]
\end{lem}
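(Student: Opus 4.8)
The second equality in the statement is nothing but the definition of $D_{B,\mathcal F}$, so the whole content is the identity $D_W(\underline x,\underline x')=\sup_{\mathcal F}D_{B,\mathcal F}(\underline x,\underline x')$. The plan is to deduce it from Lemma~\ref{lS} together with the elementary fact that right translation preserves the left-F\o lner property, following the strategy of \cite[Lemma 3.3]{BBF}. Throughout, write $H(F)=\Delta_F^*(\underline x,\underline x')$ as in Lemma~\ref{lS}, so that $D_W(\underline x,\underline x')=\lim_{n}H(F_n)/|F_n|=\inf_{F\in\Fin(G)}H(F)/|F|$ along every F\o lner sequence.

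First I would dispose of the inequality $\sup_{\mathcal F}D_{B,\mathcal F}(\underline x,\underline x')\le D_W(\underline x,\underline x')$, which is immediate: for any F\o lner sequence $\mathcal F=\{F_n\}$ and any $n$, taking $g=e$ in the supremum defining $\Delta_{F_n}^*$ gives $\sum_{f\in F_n}\rho(x_f,x'_f)=\Delta_{F_n}(\underline x,\underline x')\le\Delta_{F_n}^*(\underline x,\underline x')=H(F_n)$; dividing by $|F_n|$ and passing to the upper limit yields $D_{B,\mathcal F}(\underline x,\underline x')\le\lim_{n}H(F_n)/|F_n|=D_W(\underline x,\underline x')$ by Lemma~\ref{lS}.

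For the reverse inequality, fix an arbitrary F\o lner sequence $\mathcal F=\{F_n\}$ and $\eps>0$. Since $\Delta_{F_n}^*(\underline x,\underline x')=\sup_{g\in G}\Delta_{F_ng}(\underline x,\underline x')$ and this supremum is finite (as $\rho\le 1$ forces $\Delta_{F_n}^*(\underline x,\underline x')\le|F_n|$), for each $n$ I may pick $g_n\in G$ with $\Delta_{F_ng_n}(\underline x,\underline x')>\Delta_{F_n}^*(\underline x,\underline x')-\eps|F_n|$. Put $H_n=F_ng_n$. Because $|H_n|=|F_n|$ and, for every $g\in G$, right translation by $g_n$ commutes with left translation by $g$ and with the symmetric difference, $|gH_n\triangle H_n|=|(gF_n\triangle F_n)g_n|=|gF_n\triangle F_n|$, so $\mathcal H=\{H_n\}$ is again a F\o lner sequence. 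Then
\[
D_{B,\mathcal H}(\underline x,\underline x')=\limsup_{n\to\infty}\frac{\Delta_{F_ng_n}(\underline x,\underline x')}{|F_n|}\ge\limsup_{n\to\infty}\Big(\frac{H(F_n)}{|F_n|}-\eps\Big)=D_W(\underline x,\underline x')-\eps,
\]
the last equality again by Lemma~\ref{lS}. Hence $\sup_{\mathcal F}D_{B,\mathcal F}(\underline x,\underline x')\ge D_W(\underline x,\underline x')-\eps$, and letting $\eps\to 0$ completes the argument; choosing instead $g_n$ with $\Delta_{F_ng_n}(\underline x,\underline x')>\Delta_{F_n}^*(\underline x,\underline x')-1/n$ shows that the supremum is in fact attained.

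The proof is short, and I do not anticipate a genuine obstacle: the two points that require a moment of care are the verification that $\{F_ng_n\}$ remains a F\o lner sequence --- this is exactly where the \emph{right} Weyl pseudometric (and the corresponding left F\o lner convention) is used --- and the legitimacy of the choice of $g_n$, which is guaranteed by the finiteness of $\Delta_{F_n}^*$. Everything else is bookkeeping with the definitions and two appeals to Lemma~\ref{lS}.
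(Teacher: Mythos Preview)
Your proof is correct. Both you and the paper handle the inequality $\sup_{\mathcal F}D_{B,\mathcal F}\ge D_W$ by the same device: right-translate the F\o lner sets, observing that $\{F_ng_n\}$ remains a (left) F\o lner sequence and choosing $g_n$ to nearly realise the supremum in $\Delta_{F_n}^*$. The difference lies in the opposite inequality. You obtain $D_{B,\mathcal F}\le D_W$ from the pointwise triviality $\Delta_{F_n}\le\Delta_{F_n}^*$ together with the fact (Lemma~\ref{lS}) that $\lim_n\Delta_{F_n}^*/|F_n|$ exists and is F\o lner-independent; this is essentially a one-line argument. The paper instead invokes the infimum form of Lemma~\ref{lS}, $D_W=\inf_{K\in\Fin(G)}H(K)/|K|$, and then runs an averaging argument (adapted from \cite[Lemma~3.3]{BBF}) to show that for every finite $K$ and every $\beta<\alpha:=\sup_{\mathcal F}D_{B,\mathcal F}$ there is a translate $Kt$ with average distance exceeding $\beta$, whence $H(K)/|K|\ge\alpha$. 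Your route is shorter and uses Lemma~\ref{lS} more economically; the paper's route has the minor by-product of exhibiting, for any fixed finite window $K$, a specific translate on which the averaged distance is large, but for the statement of the lemma this extra information is not needed.
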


\begin{proof}
Note that for every \Folner sequence $\{F_n\}_{n\in\N}$ and $\{g_n\}_{n\in\N}\subset G$ the sequence $\{F_ng_n\}_{n\in\N}$ is also F{\o}lner. Therefore the right hand side of the requested equality is greater than or equal to the left one.

By Lemma~\ref{lS} to prove the converse inequality it is enough to show that for every finite set $K\subset G$ there exists $t\in G$ such that for every $\beta\in(0,1)$ satisfying
\[ \alpha:=\sup_{\F}\limsup_{n\to \infty} {1\over | F_n|} \sum_{f\in F_n} \rho(x_f,x'_f)>\beta\text{ one has }
{1\over | K|} \sum_{f\in Kt}\rho(x_f, x'_f)>\beta. \] To this end choose a \Folner sequence $\{F_n\}_{n\in\N}$ and $\{k_n\}_{n\in\N}\nearrow\infty$ satisfying \[{1\over | F_{k_n}|} \sum_{f\in F_{k_n}} \rho(x_f,x'_f) >\frac{\beta+\alpha}{2}\text{ for every }n\in\N.\]  Since $K$ is finite, there exists $n\geq N$ such that for every $k\in K$ one has
\[
|F_n\setminus kF_n|/|F_n|\leq |kF_n\triangle F_n|/|F_n| < (\alpha-\beta)/2.
\]
Consequently, for $n$ large enough and any $k\in K$ one has
\[
{1\over |F_{k_n}|} \sum_{f\in kF_{k_n}} \rho(x_f,x'_f) \geq {1\over | F_{k_n}|} \sum_{f\in F_{k_n}} \rho(x_f,x'_f) - {1\over |F_{k_n}|} \sum_{f\in F_{k_n}\setminus kF_{k_n}} \rho(x_f,x'_f) > \beta,
\]
which leads to
\[
\sum_{t\in F_{k_n}} \sum_{f\in Kt}\rho(x_f,x'_f) = \sum_{k\in K} \sum_{f\in kF_{k_n}}\rho(x_f,x'_f) > | K| | F_{k_n}| \beta.
\]
Hence there exists $t\in F_{k_n}$ with the required property and the claim follows.
\end{proof}

Define $D_W'\colon X^G\times X^G\to\mathbb R_+$ as
\[D_W'\big(\und{x},\und{z}\big)=\inf\Big\{\eps>0\,:\,D^*\big(\big\{g\in G\,:\,\rho(x_g, z_g)>\eps\big\}\big)<\eps\Big\}.
\]
Lemma \ref{lem:db-prim} yields immediately the following corollaries.
\begin{cor}\label{cor:eqi}
The pseudometrics $D_W$ and $D_W'$ are uniformly equivalent.
\end{cor}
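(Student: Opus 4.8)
The plan is to deduce Corollary~\ref{cor:eqi} by chaining together three facts that are already at our disposal: the representation $D_W=\sup_{\mathcal F}D_{B,\mathcal F}$ from Lemma~\ref{ano}; the uniform equivalence of $D_{B,\mathcal F}$ and $D'_{B,\mathcal F}$ from Lemma~\ref{lem:db-prim}, \emph{with a modulus that does not depend on $\mathcal F$}; and the identity $D^*(A)=\sup_{\mathcal F}\dbar_{\mathcal F}(A)$ for $A\subset G$ (\cite[Lemma 3.3]{BBF}). Concretely, I would show that $D_W$ is uniformly equivalent to $\sup_{\mathcal F}D'_{B,\mathcal F}$, and then that $\sup_{\mathcal F}D'_{B,\mathcal F}=D_W'$.

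For the first step, note that passing to the supremum over $\mathcal F$ preserves uniform equivalence precisely because the modulus in Lemma~\ref{lem:db-prim} is common to all $\mathcal F$: if $\delta(\cdot)$ is a modulus of uniform equivalence of $D_{B,\mathcal F}$ and $D'_{B,\mathcal F}$ valid for every $\mathcal F$, then $D_W(\und x,\und z)=\sup_{\mathcal F}D_{B,\mathcal F}(\und x,\und z)<\delta(\eta)$ forces $D_{B,\mathcal F}(\und x,\und z)<\delta(\eta)$, hence $D'_{B,\mathcal F}(\und x,\und z)<\eta$, for every $\mathcal F$, so $\sup_{\mathcal F}D'_{B,\mathcal F}(\und x,\und z)\le\eta$; replacing $\eta$ by $\eta/2$ turns this into a strict inequality, and the reverse implication is symmetric. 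Thus $\tilde\delta(\eta):=\delta(\eta/2)$ witnesses the uniform equivalence of $D_W$ and $\sup_{\mathcal F}D'_{B,\mathcal F}$.

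For the second step I would prove $\sup_{\mathcal F}D'_{B,\mathcal F}=D_W'$ directly from the definitions, writing $A_\delta=\{g:\rho(x_g,z_g)\ge\delta\}$ and $A_\delta^{>}=\{g:\rho(x_g,z_g)>\delta\}$ and using that both are non-increasing in $\delta$ with $A_{\delta'}\subset A_\delta^{>}\subset A_\delta$ whenever $\delta'>\delta$. For ``$\le$'', take $\eps>D_W'(\und x,\und z)$, so $D^*(A_\eps^{>})<\eps$; then for any $\eps'>\eps$ we have $A_{\eps'}\subset A_\eps^{>}$, hence $\dbar_{\mathcal F}(A_{\eps'})\le D^*(A_\eps^{>})<\eps<\eps'$ for every $\mathcal F$, giving $D'_{B,\mathcal F}(\und x,\und z)\le\eps'$ for all $\mathcal F$ and all $\eps'>\eps$, so $\sup_{\mathcal F}D'_{B,\mathcal F}(\und x,\und z)\le\eps$. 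For ``$\ge$'', take $\eps>\sup_{\mathcal F}D'_{B,\mathcal F}(\und x,\und z)$, pick $\eps''$ strictly between these two numbers; for each $\mathcal F$ there is $\delta_{\mathcal F}<\eps''$ with $\dbar_{\mathcal F}(A_{\delta_{\mathcal F}})<\delta_{\mathcal F}<\eps''$, and $A_{\eps''}\subset A_{\delta_{\mathcal F}}$ yields $\dbar_{\mathcal F}(A_{\eps''})<\eps''$ for every $\mathcal F$; then $D^*(A_\eps^{>})\le D^*(A_{\eps''})=\sup_{\mathcal F}\dbar_{\mathcal F}(A_{\eps''})\le\eps''<\eps$, so $D_W'(\und x,\und z)\le\eps$. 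Combining the two steps proves the corollary.

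The only genuinely non-formal point—and hence the part to watch—is the bookkeeping in the last step: reconciling the strict inequalities appearing in the definitions of $D_W'$ and of $D'_{B,\mathcal F}$ with the non-strict inequality built into the sets $A_\delta$, while simultaneously passing through the density identity $D^*=\sup_{\mathcal F}\dbar_{\mathcal F}$. As indicated above, this is handled uniformly by inserting an auxiliary threshold $\eps'$ or $\eps''$ strictly between the relevant levels; everything else is a formal consequence of Lemmas~\ref{ano} and~\ref{lem:db-prim}.
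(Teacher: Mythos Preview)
Your proposal is correct and follows exactly the route the paper intends: the paper records Corollary~\ref{cor:eqi} as an immediate consequence of Lemma~\ref{lem:db-prim} (implicitly together with Lemma~\ref{ano} and the identity $D^*=\sup_{\mathcal F}\dbar_{\mathcal F}$), and you have simply spelled out those immediate steps in full, including the careful handling of the strict/non-strict threshold discrepancy between $D'_{B,\mathcal F}$ and $D_W'$. There is nothing to add.
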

\begin{cor}\label{cor:equiv_m}
Let $\tilde\rho$ be another compatible metric on $X$ and $\tilde D_{W}$ be defined as $D_{W}$ above with $\tilde\rho$  in place of $\rho$. Then
$D_{W}$ and $\tilde D_{W}$ are uniformly equivalent on $X^G$.
\end{cor}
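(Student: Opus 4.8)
The plan is to reduce everything to the intrinsic pseudometric $D_W'$ and to use that a change of compatible metric affects $D_W'$ only through a uniformly continuous reparametrization of the sublevel sets $\{g\in G\colon\rho(x_g,z_g)>\eps\}$. The first ingredient is the elementary observation that, since $\rho$ and $\tilde\rho$ are both compatible metrics on the compact metrizable space $X$, the identity map is a homeomorphism of $(X,\rho)$ onto $(X,\tilde\rho)$, hence uniformly continuous in both directions. Thus there is a function $(0,1)\ni\eps\mapsto\eta(\eps)\in(0,\eps]$ such that for all $a,b\in X$ one has $\rho(a,b)\le\eta(\eps)\Rightarrow\tilde\rho(a,b)<\eps$ and $\tilde\rho(a,b)\le\eta(\eps)\Rightarrow\rho(a,b)<\eps$.

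Writing $\tilde D_W'$ for the pseudometric defined exactly as $D_W'$ but with $\tilde\rho$ in place of $\rho$, the key step is to show that $D_W'$ and $\tilde D_W'$ are uniformly equivalent on $X^G$. Fix $\eps\in(0,1)$ and suppose $D_W'(\und{x},\und{z})<\eta(\eps/2)$. By definition of $D_W'$ there is $\eps'<\eta(\eps/2)$ with $D^*(\{g\colon\rho(x_g,z_g)>\eps'\})<\eps'$, and since $\eps'<\eta(\eps/2)$ this gives $D^*(\{g\colon\rho(x_g,z_g)>\eta(\eps/2)\})<\eta(\eps/2)$. The defining property of $\eta$ yields the inclusion $\{g\colon\tilde\rho(x_g,z_g)\ge\eps/2\}\subseteq\{g\colon\rho(x_g,z_g)>\eta(\eps/2)\}$, so by monotonicity of the upper Banach density $D^*$ we obtain $D^*(\{g\colon\tilde\rho(x_g,z_g)>\eps/2\})<\eta(\eps/2)\le\eps/2$, that is, $\tilde D_W'(\und{x},\und{z})\le\eps/2<\eps$. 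Interchanging the roles of $\rho$ and $\tilde\rho$ gives the symmetric estimate, so $D_W'$ and $\tilde D_W'$ are uniformly equivalent.

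Finally I would combine this with Corollary~\ref{cor:eqi}: that corollary states that $D_W$ and $D_W'$ are uniformly equivalent, and since its proof (via Lemma~\ref{lem:db-prim} and Lemma~\ref{ano}) uses nothing about $\rho$ beyond its being a compatible metric on $X$, the same argument applied to $\tilde\rho$ shows that $\tilde D_W$ and $\tilde D_W'$ are uniformly equivalent. Transitivity of uniform equivalence then yields that $D_W$ and $\tilde D_W$ are uniformly equivalent on $X^G$, as claimed. I do not expect a genuine obstacle here; the only points requiring a little care are the bookkeeping of strict versus non-strict inequalities in the definition of $D_W'$ (absorbed by replacing $\eps$ with $\eps/2$) and the remark that Corollary~\ref{cor:eqi} transfers verbatim to the metric $\tilde\rho$.
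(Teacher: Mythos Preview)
Your argument is correct and follows essentially the same route as the paper: both reduce to the ``primed'' pseudometric, use that compatible metrics on a compact space are uniformly equivalent (so sublevel sets nest into one another after a uniform reparametrization), and then invoke the uniform equivalence of the primed and unprimed pseudometrics. The only organizational difference is that the paper phrases the reduction at the Besicovitch level (Lemma~\ref{lem:db-prim} and its Corollary on $\tilde D_{B,\mathcal F}$, with the modulus independent of $\mathcal F$) and then passes to $D_W$ via the supremum formula of Lemma~\ref{ano}, whereas you carry out the same $\eta$-bookkeeping directly for $D_W'$ and then quote Corollary~\ref{cor:eqi}; the content is identical.
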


It occurs that if $(\alf^G,G)$ is a shift space,
then the Besicovitch pseudometric is uniformly equivalent with the $\dbar$-pseudometric
measuring the upper density of the set of coordinates at which two symbolic sequences differ. As a corollary we get an analogous claim for the Weyl pseudometric and the upper Banach density.
We omit the proof of Theorem~\ref{thm:familyF} as it follows the same lines as \cite[Proposition 1, Corollary 1]{DI} (see also \cite[Theorem 4]{KLOg}).

Given a continuous function $f\colon X\to\R$ define
\[
f(\und{x}_G)=\{f(gx)\}_{g\in G}\in \R^G.
\]
For a family $\mathcal{K}$ of continuous functions from $X$ to $\mathbb{R}$ and a \Folner sequence $\mathcal F$ set
\[
D^{\mathcal{K}}_W(x,x')=\sup_{\phi\in\mathcal{K}} D_W(\phi(\und{x}_G),\phi(\und{x}'_G)),\quad D^{\mathcal{K}}_{B,\mathcal F}(x,x')=\sup_{\phi\in\mathcal{K}} D_{B,\mathcal F}(\phi(\und{x}_G),\phi(\und{x}'_G)).
\]
\begin{thm}\label{thm:familyF}
Let $(X,G)$ be a dynamical system and let $\mathcal{K}$ be a uniformly equicontinuous and uniformly bounded family of real-valued functions on $X$ such that
$\mathcal{K}_G=\{x\mapsto\phi(gx): \phi\in\mathcal{K},\, g\in G\}$ separates the points of $X$. Then for any \Folner sequence $\mathcal F$ the pseudometrics $D_{B,\mathcal F}$ and $D^\mathcal{K}_{B,\mathcal F}$ are uniformly equivalent on $X$. Moreover, the modulus of uniform equivalence does not depend on the choice of $\mathcal F$ and consequently, the pseudometrics $D_W$ and $D^{\mathcal K}_W$ are also uniformly equivalent.
\end{thm}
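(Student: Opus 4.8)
The plan is to reduce everything to the Besicovitch level, since the ``moreover'' part about $D_W$ and $D_W^{\mathcal K}$ will then follow automatically from Lemma~\ref{ano}: indeed, if $D_{B,\mathcal F}$ and $D_{B,\mathcal F}^{\mathcal K}$ are uniformly equivalent with a modulus not depending on $\mathcal F$, then taking the supremum over all F{\o}lner sequences on both sides and using $D_W(\und x,\und z)=\sup_{\mathcal F}D_{B,\mathcal F}(\und x,\und z)$ (and the analogous identity for the $\mathcal K$-versions, which holds because the supremum over $\mathcal F$ commutes with the supremum over $\phi\in\mathcal K$) yields the claim with the same modulus. So the whole content is the statement about the Besicovitch pseudometrics.

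For the easy direction, $D^{\mathcal K}_{B,\mathcal F}\preceq D_{B,\mathcal F}$: fix $\phi\in\mathcal K$ with equicontinuity modulus $\delta$ and bound $M$. Given $\eps>0$, if $D_{B,\mathcal F}(x,x')$ is small one uses Lemma~\ref{lem:db-prim} to pass to $D'_{B,\mathcal F}$: on a set of $g$'s of upper density at least $1-\eta$ we have $\rho(gx,gx')<\delta(\eps)$, hence $|\phi(gx)-\phi(gx')|<\eps$ there, while on the complementary set of density at most $\eta$ the difference is at most $2M$. Averaging over a F{\o}lner sequence gives $D_{B,\mathcal F}(\phi(\und x_G),\phi(\und x'_G))\le \eps+2M\eta$, and since this holds for every $\phi\in\mathcal K$ simultaneously (the modulus $\delta$ is uniform over $\mathcal K$), we get control of $D^{\mathcal K}_{B,\mathcal F}$. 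Note $\eta$ here is essentially $D'_{B,\mathcal F}(x,x')$, itself controlled by $D_{B,\mathcal F}(x,x')$ via Lemma~\ref{lem:db-prim}, uniformly in $\mathcal F$.

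The hard direction is $D_{B,\mathcal F}\preceq D^{\mathcal K}_{B,\mathcal F}$, and this is where the hypothesis that $\mathcal K_G$ separates points does the work. The key step is a compactness argument yielding a ``reverse modulus of continuity'': for every $\gamma>0$ there are finitely many functions $\psi_1,\dots,\psi_m\in\mathcal K_G$ (say $\psi_j(y)=\phi_j(g_jy)$) and a $\kappa>0$ such that whenever $y,y'\in X$ satisfy $|\psi_j(y)-\psi_j(y')|\le\kappa$ for all $j\le m$, then $\rho(y,y')\le\gamma$. This follows by a standard argument: the sets $\{(y,y')\in X\times X:\rho(y,y')\ge\gamma\}$ is compact, disjoint from the diagonal, $\mathcal K_G$ separates points, so finitely many $\psi_j$ already separate enough, and compactness upgrades this to a uniform $\kappa$. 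Having this, one argues as in the easy direction but in reverse: if $D^{\mathcal K}_{B,\mathcal F}(x,x')$ is small, then for each $j$ the set $\{g:|\phi_j(g_jgx)-\phi_j(g_jgx')|>\kappa\}$ has small upper $\mathcal F$-density; one must then translate by $g_j^{-1}$, using that $\dbar_{\mathcal F}$ is translation-quasi-invariant along a F{\o}lner sequence (a finite translate changes densities by an arbitrarily small amount once $F_n$ is sufficiently invariant), to conclude the set $\{g:|\phi_j(gx)-\phi_j(gx')|>\kappa \text{ after reindexing}\}$ is small; intersecting over the finitely many $j$ gives a set $S$ of $g$'s of density close to $1$ on which $\rho(gx,gx')\le\gamma$, hence $\dbar_{\mathcal F}(\{g:\rho(gx,gx')>\gamma\})$ is small, and Lemma~\ref{lem:db-prim} converts this back to a bound on $D_{B,\mathcal F}(x,x')$. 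The main obstacle is bookkeeping the translations $g_j$ and checking that the resulting modulus depends only on $\gamma$ (through $m$, $\kappa$, and the $g_j$) and not on $\mathcal F$ — this is fine because the $g_j$ are finitely many fixed group elements and any F{\o}lner sequence is eventually $(\{g_1,\dots,g_m\}^{-1},\eps)$-invariant, so the density distortion from translating is eventually below any prescribed threshold, uniformly in the tail. The appeal to \cite[Proposition 1, Corollary 1]{DI} is exactly this circle of ideas in the $G=\Z$ case, and the amenable-group version is obtained by replacing intervals with F{\o}lner sets throughout.
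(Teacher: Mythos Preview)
Your proposal is correct and follows precisely the route the paper has in mind: the paper omits the proof and simply points to \cite[Proposition 1, Corollary 1]{DI} and \cite[Theorem 4]{KLOg}, and what you have sketched is exactly that argument transported to amenable $G$ --- the easy direction via the common modulus of equicontinuity and uniform bound, the hard direction via the compactness step extracting finitely many separating $\psi_j\in\mathcal K_G$, and then density bookkeeping through Lemma~\ref{lem:db-prim}. One small simplification: you need not speak of the translation distortion being ``eventually below any threshold'' --- for a left F{\o}lner sequence the upper density $\bar d_{\mathcal F}$ is \emph{exactly} invariant under left translation by any fixed $g_j$ (since $|g_j^{-1}F_n\triangle F_n|/|F_n|\to 0$ and the limsup kills the error), so no residual $\mathcal F$-dependence enters at that step.
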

Observe that any finite family $\mathcal{K}$ is uniformly equicontinuous and uniformly bounded.
In particular, taking $\mathcal{K}=\{\iota_e\}$, where $\iota_e(\und x_G)=x_e$ we obtain:
\begin{cor}\label{cor:uniformlyequiv}


If we consider the dynamical system $(\alf^G,G)$, then the Besicovitch pseudometric and
\emph{$\bar{d}_{\mathcal F}$-pseudometric} given by
\[
\bar{d}_{\mathcal F}(x,x')=\bar{d}_{\mathcal F}(\{g\in G : x_g\neq x'_g\})=\limsup_{n\to\infty}\big|\{f\in F_n : x_f\neq x'_f\}\big|/|F_n|,
\]
are uniformly equivalent and a modulus of uniform equivalence does not depend on $\mathcal F$.
The same is true for the Weyl pseudometric: $D_W$ is uniformly equivalent to the pseudometric $D^*$ given for $\und x, \und x'\in \alf^G$ by
\[
D^*(\und x,\und x')=D^*(\{g\in G : x_g\neq x'_g\})=\lim_{n\to\infty}\sup_{g\in G}\big|\{f\in F_ng : x_f\neq x'_f\}\big|/|F_n|.
\]
\end{cor}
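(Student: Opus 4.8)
The plan is to derive Corollary~\ref{cor:uniformlyequiv} directly from Theorem~\ref{thm:familyF} by choosing the single-function family $\mathcal{K}=\{\iota_e\}$ where $\iota_e(\und x_G)=x_e$, and then identifying the resulting pseudometrics $D^{\mathcal{K}}_{B,\mathcal F}$ and $D^{\mathcal{K}}_W$ with the density pseudometrics $\bar d_{\mathcal F}$ and $D^*$. First I would verify the hypotheses of Theorem~\ref{thm:familyF}: a one-element family of continuous real functions is trivially uniformly equicontinuous and uniformly bounded (here $\iota_e$ takes values in the finite discrete alphabet $\alf$ viewed inside $\R$), and $\mathcal{K}_G=\{\und x_G\mapsto x_g:g\in G\}$ consists of the coordinate projections, which obviously separate points of $\alf^G$. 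Hence the theorem applies and gives that $D_{B,\mathcal F}$ is uniformly equivalent to $D^{\{\iota_e\}}_{B,\mathcal F}$ with a modulus independent of $\mathcal F$, and likewise $D_W$ is uniformly equivalent to $D^{\{\iota_e\}}_W$.

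Next I would unwind the definitions. By construction $D^{\{\iota_e\}}_{B,\mathcal F}(x,x')=D_{B,\mathcal F}(\iota_e(\und x_G),\iota_e(\und x'_G))=\limsup_{n\to\infty}\frac{1}{|F_n|}\sum_{f\in F_n}|x_f-x'_f|$, where $x_f,x'_f$ range over the alphabet $\alf\subset\R$. Because $\alf$ is a finite set of distinct reals, there are constants $0<c\le C$ with $c\,\mathbf 1[x_f\ne x'_f]\le |x_f-x'_f|\le C\,\mathbf 1[x_f\ne x'_f]$ for all $f$; summing and taking the $\limsup$ shows $c\,\bar d_{\mathcal F}(x,x')\le D^{\{\iota_e\}}_{B,\mathcal F}(x,x')\le C\,\bar d_{\mathcal F}(x,x')$, so these two pseudometrics are uniformly equivalent with a modulus depending only on $\alf$, in particular not on $\mathcal F$. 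Chaining this with the equivalence from Theorem~\ref{thm:familyF} gives the first assertion. For the Weyl case, the same elementary sandwich applied inside $\Delta^*_{F_n}$ (the supremum over translates is monotone in the summand) yields $c\,D^*(\{g:x_g\ne x'_g\})\le D^{\{\iota_e\}}_W(x,x')\le C\,D^*(\{g:x_g\ne x'_g\})$; here I would also invoke Corollary~\ref{cor:uniformlyequiv}'s companion facts—namely that the $\limsup$ in $D_W$ is a genuine limit independent of $\mathcal F$ (Corollary~\ref{limitexists}, Lemma~\ref{lS})—to justify writing $D^*(\{g:x_g\ne x'_g\})=\lim_{n\to\infty}\sup_{g\in G}|\{f\in F_ng:x_f\ne x'_f\}|/|F_n|$. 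Combining with the equivalence $D_W\sim D^{\{\iota_e\}}_W$ from Theorem~\ref{thm:familyF} finishes the proof.

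The only genuine content beyond bookkeeping is the observation that on a finite alphabet the Euclidean distance $|a-b|$ and the discrete distance $\mathbf 1[a\ne b]$ are bi-Lipschitz equivalent with explicit constants; everything else is just substitution into already-proved statements. Consequently I expect no serious obstacle—the main care needed is to keep track of which equivalences are ``with a modulus independent of $\mathcal F$'' so that the final statement about $\bar d_{\mathcal F}$ having an $\mathcal F$-independent modulus is actually licensed, and to make sure the identity $D^*(\{g:x_g\ne x'_g\})=\lim_n\sup_g|\{f\in F_ng:x_f\ne x'_f\}|/|F_n|$ is quoted in the form already established for $D_W$ rather than re-derived. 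Since the statement is flagged as ``In particular'', I would keep the write-up to a couple of sentences, essentially: apply Theorem~\ref{thm:familyF} with $\mathcal K=\{\iota_e\}$, note the bi-Lipschitz equivalence of the two metrics on $\alf$, and conclude.
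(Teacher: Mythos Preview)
Your proposal is correct and follows exactly the route the paper takes: the paper simply says ``taking $\mathcal K=\{\iota_e\}$, where $\iota_e(\und x_G)=x_e$'' and declares the corollary. Your additional remark about the bi-Lipschitz equivalence between $|a-b|$ and $\mathbf 1[a\ne b]$ on the finite alphabet is a detail the paper leaves implicit, but it is the right thing to check and costs nothing.
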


To sum up this section we gather the facts about $D_W$ in one theorem.
\begin{thm}\label{tog}
Let $G$ be a countable amenable group acting on a compact metric space $(X,\rho)$. Fix any \Folner sequence $\{H_n\}_{n\in\N}$. Then for any $\und x, \und z\in X^G$ one has
\begin{multline*}D_W(\und x, \und z)=\lim_{n\to\infty}\sup_{g\in G}\frac{1}{|H_n|}\sum_{f\in H_ng}\rho(x_f,z_f)=\sup_{\mathcal F}D_{B,\mathcal F}(\und x, \und z)=\\=\sup_{\mathcal F}\limsup\limits_{n\to\infty}\frac{1}{|F_n|}\sum_{f\in F_N}\rho(x_{f},z_{f})=\inf_{F\in\Fin(G)}\frac{1}{|F|}\sup_{g\in G}\sum_{f\in F}\rho(x_{fg}, z_{fg}).\end{multline*}
Moreover, $D_W$ is uniformly equivalent to $D_W'$ given by \[D_W'\big(\und{x},\und{z}\big)=\inf\left\{\eps>0\,:\,\lim_{N\to\infty}\sup_{g\in G}\frac{1}{|F_N|}|\{f\in F_Ng\,:\,\rho(fx, fz)>\eps\}|<\eps\right\}.\]
If $X=\alf^G$ and $G$ acts by the shift, then $D_W$ and $D_W'$ are uniformly equivalent to the pseudmetric
\[
D^*(\und x,\und z)=D^*(\{g\in G : x_g\neq z_g\}).
\]
\end{thm}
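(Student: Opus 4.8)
The plan is to assemble Theorem~\ref{tog} directly from the lemmas and corollaries already proved in this section, so essentially no new argument is required; the work is in checking that each displayed expression matches one of the characterizations we have in hand. First I would record that, by Lemma~\ref{lS} applied to $H(F)=\Delta_F^*(\und x,\und z)$, the function $H$ is $G$-invariant and satisfies Shearer's inequality, hence obeys the infimum rule; this simultaneously gives that $D_W(\und x,\und z)=\lim_{n\to\infty}H(H_n)/|H_n|$ for the fixed \Folner sequence $\{H_n\}_{n\in\N}$ (so the $\limsup$ in the definition is a genuine limit, independent of the \Folner sequence, matching the first displayed equality) and that it equals $\inf_{F\in\Fin(G)}H(F)/|F|$. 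The only cosmetic point here is that $\inf_F \tfrac1{|F|}\sup_{g}\sum_{f\in F}\rho(x_{fg},z_{fg})$ is literally $\inf_F H(F)/|F|$ after rewriting $\sum_{f\in Fg}\rho(x_f,z_f)=\sum_{f\in F}\rho(x_{fg},z_{fg})$, which is a change of summation variable.

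Next I would invoke Lemma~\ref{ano}, which gives exactly the two middle equalities $D_W(\und x,\und z)=\sup_{\mathcal F}D_{B,\mathcal F}(\und x,\und z)=\sup_{\mathcal F}\limsup_{n\to\infty}\tfrac1{|F_n|}\sum_{f\in F_n}\rho(x_f,z_f)$ (the typo ``$F_N$'' in the summation index in the statement of Theorem~\ref{tog} should be read as $F_n$). That dispatches the displayed multi-line equality entirely. For the ``moreover'' clause, the uniform equivalence of $D_W$ and $D_W'$ is Corollary~\ref{cor:eqi}, and one just has to observe that the expression for $D_W'$ written in Theorem~\ref{tog} coincides with the one defining $D_W'$ before Corollary~\ref{cor:eqi}: indeed $D^*(A)=\lim_{N\to\infty}D^*_{F_N}(A)=\lim_{N\to\infty}\sup_{g\in G}|A\cap F_Ng|/|F_N|$ by \cite[Lemma 2.9]{DHZ}, applied to the set $A=\{g\in G:\rho(gx,gz)>\eps\}$, so the infimum over $\eps$ of the condition ``$D^*(A)<\eps$'' is the same in both formulations.

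Finally, for the symbolic case $X=\alf^G$ with the shift action, I would apply Corollary~\ref{cor:uniformlyequiv} (equivalently Corollary~\ref{cor:eqi} combined with Theorem~\ref{thm:familyF} for $\mathcal K=\{\iota_e\}$): it states precisely that $D_W$ is uniformly equivalent to $D^*(\{g\in G: x_g\neq z_g\})$, with the modulus of equivalence independent of the \Folner sequence. Composing with the equivalence $D_W\sim D_W'$ gives that all three are mutually uniformly equivalent.

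There is no real obstacle: the theorem is a summary, and every clause is a restatement or trivial reindexing of an already-established result. The one point requiring a line of care is the bookkeeping identifying the ``$\sup_g \tfrac1{|F_n|}|\{f\in F_ng:\dots\}|$'' form of the upper Banach density with the intrinsic $D^*(A)=\inf\{D^*_F(A):F\in\Fin(G)\}$ used earlier — but that identification is exactly \cite[Lemma 2.9]{DHZ}, quoted in the Densities subsection, so even this is immediate.
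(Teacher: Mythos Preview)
Your proposal is correct and matches the paper's approach exactly: the paper presents Theorem~\ref{tog} explicitly as a summary of the section (``To sum up this section we gather the facts about $D_W$ in one theorem'') and gives no separate proof, so assembling the displayed equalities from Lemma~\ref{lS}, Lemma~\ref{ano}, Corollary~\ref{cor:eqi}, and Corollary~\ref{cor:uniformlyequiv} is precisely what is intended. Your bookkeeping remarks (the reindexing $\sum_{f\in Fg}=\sum_{f\in F}\rho(x_{fg},z_{fg})$ and the identification of the $D^*$ formula via \cite[Lemma 2.9]{DHZ}) are the only things one might add, and you handle them correctly.
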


\section{Quasi-uniform Convergence and Simplices of Measures}

Given $F\in\Fin( G)$ and $\und x=\{x_g\}_{g\in G}\in X^G$ we define the \emph{empirical measure} of $x$ with respect to $F$ as
$$\m(\und x, F)=\frac{1}{|F|}\sum_{g\in F} \hat{\delta}_{x_g}\in\M(X).$$

Fix 
$\mathcal F=\{F_n\}_{n\in\N}$. 
A measure $\mu\in \M(X)$ is a \emph{distribution measure} for $\{x_g\}_{g\in G}\in X^G$ if $\mu\in\M(X)$ is a limit of a subsequence of
$\{\Emp(\underline{x},F_n)\}_{n=1}^\infty$. The set of all distribution measures of $\underline{x}$ (along $\mathcal F$) is denoted by $\om{\underline{x}}$. Clearly $\om{\underline{x}}$ is a closed and nonempty subset of $\M(X)$. For $x\in X$ we put $\om{x}=\om{\und x_G}$.

\begin{rem}
If $\F$ satisfies $|F_n|/|F_{n+1}|\to 1\text{ as }n\to\infty$ and $F_n\subset F_{n+1}$ for every $n\in\N$, then for every $\und{x}\in X^G$ the set $\om{\und{x}}$ is connected. 
 \end{rem}

\begin{proof}
 Fix $\und{x}\in X^G$. Let $B$ be a Borel set.
 One has \begin{multline*}\m(\und{x},F_{n+1})(B)
 =\frac{|F_{n}|}{|F_{n+1}|}\cdot\frac{1}{|F_{n}|}\sum_{f\in F_{n+1}}\hat{\delta}_{x_f}(B)\leq \m(\und{x}, F_{n})(B)+ \frac{|F_{n+1}|-|F_n|}{|F_{n+1}|}.
 \end{multline*}
Hence $\di(\m(\und{x}, F_{n+1}),\m(\und{x},F_{n}))\leq (|F_{n+1}|-|F_n|)/|F_{n+1}|\to 0\text{ as }n\to\infty$
\newline
and so $\om{\und{x}}$ is connected by~\cite[Theorem 1]{Schaefer}.
 \end{proof}
 
\begin{prop}
If $\eps>0$ and $\mathcal F=\{F_n\}_{n\in\N}$ is a F{\o}lner sequence with \[\lim\limits_{n\to\infty}|F_n|/|F_{n+1}|=1-\eps \text{  and  } F_n\subset F_{n+1},\]
then there are a dynamical system $(X,G)$ and $x\in X$ such that $\hat\omega_{\mathcal F}(x)$ is not connected.
\end{prop}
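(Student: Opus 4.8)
The plan is to construct an explicit example, most naturally in a symbolic system $(\alf^G,G)$ with $\alf=\{0,1\}$ and $G=\Z$ (to keep things concrete; a general amenable $G$ works the same way since a single base \Folner sequence is fixed and we only use the gaps $|F_{n+1}\setminus F_n|$), where the empirical distributions along $\mathcal F$ are forced to oscillate between two well-separated regions of $\M(X)$ without being able to pass continuously between them. The idea is that the prescribed ratio $|F_n|/|F_{n+1}|\to 1-\eps$ means that when we pass from $F_n$ to $F_{n+1}$ we add a block of relative size $\eps>0$ all at once; if on that added block $\und x$ is constant (say all $1$'s) while on a suitable earlier region it was all $0$'s, the empirical measure jumps by a definite amount, and by arranging the combinatorics we can keep $\om{\und x}$ inside the union of two disjoint closed balls.

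Concretely, I would take $\mathcal F=\{F_n\}$ with $F_n\subset F_{n+1}$, $|F_n|/|F_{n+1}|\to 1-\eps$, and choose a point $\und x\in\{0,1\}^{\Z}$ whose symbol on each annulus $F_{n+1}\setminus F_n$ is identically equal to $a_n\in\{0,1\}$, where the sequence $(a_n)$ is chosen so that the running averages $\m(\und x,F_n)(\{y:y_e=1\}) = |F_n\cap x^{-1}(1)|/|F_n|$ stay alternately close to, say, $\le 1/4$ for infinitely many $n$ and $\ge 3/4$ for infinitely many $n$. Because each annulus has relative size about $\eps$ and the averages are Cesàro-type averages of the $a_n$'s weighted by the annulus sizes $|F_{n+1}|-|F_n|$, a greedy choice of the $a_n$ (keep appending the current minority symbol until the frequency crosses the relevant threshold, then switch) realizes this oscillation provided $\eps$ is positive; here one uses that the weights $(|F_{n+1}|-|F_n|)/|F_{n+1}|$ are bounded below by roughly $\eps$ eventually, which prevents the averages from being ``frozen'' and guarantees they can be steered across any fixed gap. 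Then every $\mu\in\om{\und x}$ satisfies $\mu(\{y:y_e=1\})\le 1/4$ or $\mu(\{y:y_e=1\})\ge 3/4$, so $\om{\und x}$ is contained in the union of two disjoint closed sets $\{\mu:\mu(\{y:y_e=1\})\le 1/4\}$ and $\{\mu:\mu(\{y:y_e=1\})\ge 3/4\}$, each meeting $\om{\und x}$, hence $\om{\und x}$ is disconnected.

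The main point to verify carefully — and what I expect to be the only real obstacle — is the realizability of the oscillating frequency sequence: one must check that with the greedy switching strategy the frequency $c_n:=|F_n\cap x^{-1}(1)|/|F_n|$ genuinely comes back below $1/4$ and above $3/4$ infinitely often, and in particular that a single step cannot overshoot from below $1/4$ to above $3/4$ (which would spoil the dichotomy by landing in the forbidden middle band). This is where the hypothesis $|F_n|/|F_{n+1}|\to 1-\eps$ is used in both directions: the step sizes $|c_{n+1}-c_n|$ are of order $\eps$ and in fact $c_{n+1}$ is a convex combination $c_{n+1}=(1-\eta_n)c_n+\eta_n a_n$ with $\eta_n=(|F_{n+1}|-|F_n|)/|F_{n+1}|\to\eps$, so for $\eps$ small enough the increments are small enough to avoid overshooting the $[1/4,3/4]$ band, while still being bounded below so the average is eventually dragged past any threshold. (If $\eps\ge 1/2$ one can instead work with three or more symbols, or rescale the thresholds; but it is cleanest to note that it suffices to prove the statement for small $\eps$, or simply to replace the band $[1/4,3/4]$ by $[1/2-\delta,1/2+\delta]$ for $\delta$ depending on $\eps$.) Once this combinatorial bookkeeping is done, the conclusion that $\om{\und x}$ lies in a union of two disjoint closed balls in $(\M(X),\di)$, each of which it meets, is immediate, and disconnectedness follows.
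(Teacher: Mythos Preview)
Your construction has the right shape but the central step is mis-argued. Write $c_n=\m(\und x,F_n)([1])$ and $\eta_n=(|F_{n+1}|-|F_n|)/|F_{n+1}|\to\eps$, so that $c_{n+1}=(1-\eta_n)c_n+\eta_n a_n$ as you observe. For $\om{\und x}$ to sit inside $\{\mu:\mu([1])\le 1/4\}\cup\{\mu:\mu([1])\ge 3/4\}$ you need the \emph{entire tail} of $(c_n)$ to avoid the open interval $(1/4,3/4)$, not merely to visit each side infinitely often. Your greedy rule (append $1$'s until $c_n\ge 3/4$, then $0$'s until $c_n\le 1/4$) forces $c_n$ to \emph{walk through} $(1/4,3/4)$ on every crossing, because the one-step increment satisfies $|c_{n+1}-c_n|=\eta_n|a_n-c_n|\le\eta_n\approx\eps$; for $\eps<1/2$ there are infinitely many $n$ with $c_n$ in the middle band, and these produce limit measures there. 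Your parenthetical has the logic inverted: a step that jumps from below $1/4$ to above $3/4$ is precisely what you would \emph{want} (it skips the forbidden band), whereas small increments cause, rather than prevent, landings in it.

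The paper sidesteps this by taking the simplest choice of $(a_n)$: strict alternation, i.e.\ $x_g=1$ on $F_1$ and on each annulus $F_{2k+1}\setminus F_{2k}$, and $x_g=0$ on the remaining annuli. A direct computation then gives $c_{2n}\to(1-\eps)/(2-\eps)$ and $c_{2n+1}\to 1/(2-\eps)$. Since every convergent subsequence of $\{\m(\und x,F_n)\}$ has a further subsequence of constant parity, one gets $\om{\und x}=\mathcal E\cup\mathcal O$, where $\mathcal E$ and $\mathcal O$ are the closed sets of even- and odd-index limits, separated in $\di$ because $[1]$ is clopen. No threshold-crossing analysis is needed at all. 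Your scheme can be repaired by shrinking the forbidden band to width less than $\eps/(2-\eps)$ \emph{and} switching $a_n$ whenever $c_n$ crosses $1/2$ (rather than at the outer thresholds); one then checks that after the first crossing the sequence alternates and converges to the $2$-cycle $\{(1-\eps)/(2-\eps),\,1/(2-\eps)\}$ --- which is exactly the paper's construction.
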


\begin{proof}
Let $X=\{0,1\}^{G}$ with the shift action of $G$. Let $x=\{x_g\}_{g\in G}$ satisfy $x_g=1$ if $g\in F_1$ or $g\in F_{2n+1}\setminus F_{2n}$ for some $n\in\N$ and $x_g=0$ otherwise.  
Define \[\mathcal E=\{\mu\in\om{x}\,:\,\mu=\lim\limits_{n\to\infty}\m(x,F_{2k_n})\text{ for some }k_n\nearrow\infty\},\]\[\mathcal O=\{\nu\in\om{x}\,:\,\nu=\lim\limits_{n\to\infty}\m(x,F_{2l_n+1})\text{ for some }l_n\nearrow\infty\}.\]
Clearly, $\mathcal O\cup\mathcal E=\om{x}$.
Moreover, if $\mu\in\mathcal E$, then for $\eta<1/2$ one has\footnote{Recall that by $[1]^{\eta}$ we mean the set of all points within $\eta$ of the cylinder $[1]$.} $\mu([1]^{\eta})=\mu([1])=(1-\eps)/(2-\eps)$ and if $\nu\in\mathcal O$, then $\nu([1])=1/(2-\eps)$ (we leave the computations to the reader). Therefore $\dist_{\di}(\mathcal O,\mathcal E)>0$ and so $\mathcal O\cap\mathcal E=\emptyset$. Since both $\mathcal O$ and $\mathcal E$ are non-empty and closed, $\om{x}$ is not connected.
 \end{proof}



For $\und x\in X^G$ let \[\M_G(\und x)=\bigcup_{\mathcal F}\om{\und x}.\]
Denote also $\M_G(x)=\M_G(\underline x_G)$.
\begin{rem}\label{domk}
For any $x\in X$ the set $\M_G(x)$ is closed.
\end{rem} 

\begin{proof}
Fix $x\in X$ and pick $\{\mu_k\}_{k\in\N}\subset\M_G(x)$ such that $\mu_k\to\mu$ as $k\to\infty$ for some $\mu\in\M(X)$. We will show that $\mu\in\M_G(x)$. For every $k\in\N$ let $\mathcal F^{(k)}=\{F^{(k)}_n\}_{n\in\N}$ be a \Folner sequence such that $\m(x,F^{(k)}_n)\to\mu_k$ as $n\to\infty$. Enumerate elements of $G$ as $g_1,g_2,\ldots$. For every $k\in\N$ choose $n(k)\in\N$ such that $\di\left(\m(x,F^{(k)}_{n(k)}),\mu_k\right)<1/k$ and for every $i\leq k$ one has $|g_iF_{n(k)}^{(k)}\triangle F_{n(k)}^{(k)}|/|F_{n(k)}^{(k)}|<1/k$.
Define $\mathcal F=\{F_{n(k)}^{(k)}\}_{k\in\N}$. Then $\mathcal F$ is a \Folner sequence and $\om{x}=\{\mu\}$. 
\end{proof}

Downarowicz and Iwanik proved that if $x_n\to x$ quasi-uniformly, then $\M_{\mathbb Z}(x_n)$ tends to $\M_{\mathbb Z}(x)$ in $\HD$ \cite[Theorem 2]{DI}. We generalize this result.

\begin{rem}
In \cite{BFK} it was proved that, in general, the topology generated by the Weyl pseudometric is neither separable, nor complete. In particular, it is not compact.
\end{rem}

Repeating the proof of \cite[Theorem 7]{KLOg} we get:

\begin{thm}\label{HD}
For every $\eps>0$ there is $\delta>0$ such that if $\underline{x},\underline{x}'\in X^G$ and $D_{B,\mathcal F}(\underline x,
\underline x')<\delta$ then $\HD(\om{\underline x},\om{\underline x'})<\eps$. Moreover, $\delta$ is independent of $\mathcal F$.
\end{thm}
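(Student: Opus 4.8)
The plan is to relate the Prokhorov distance between empirical measures $\m(\und x,F)$ and $\m(\und x',F)$ to the Besicovitch-type quantity $\frac{1}{|F|}\sum_{g\in F}\rho(x_g,x_g')$, and then control the Hausdorff distance between the sets of subsequential limits. First I would recall the elementary fact that for any finite set $F$ one has a direct Prokhorov estimate: if $A=\{g\in F:\rho(x_g,x_g')\geq\eta\}$, then for every Borel set $B$,
\[
\m(\und x,F)(B)=\frac{1}{|F|}\Big(\big|\{g\in F:x_g\in B\}\big|\Big)\leq \m(\und x',F)(B^{\eta})+\frac{|A|}{|F|},
\]
because every $g\in F\setminus A$ with $x_g\in B$ contributes $x_g'\in B^{\eta}$. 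Hence $\di\big(\m(\und x,F),\m(\und x',F)\big)\leq\max\{\eta,|A|/|F|\}$. Combining this with Lemma~\ref{lem:db-prim}, which tells us that $D_{B,\mathcal F}(\und x,\und x')<\delta$ forces $\dbar_{\mathcal F}\big(\{g\in G:\rho(x_g,x_g')\geq\delta'\}\big)<\delta'$ for a $\delta'$ depending only on $\delta$ (with modulus independent of $\mathcal F$), we obtain: for $n$ large enough, $\di\big(\m(\und x,F_n),\m(\und x',F_n)\big)<\delta''$ where $\delta''\to 0$ as $\delta\to 0$, uniformly in $\mathcal F$.

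Next I would upgrade this pointwise (in $n$) estimate to the Hausdorff-distance statement about the limit sets. Given $\mu\in\om{\und x}$, pick $n_k\to\infty$ with $\m(\und x,F_{n_k})\to\mu$. By compactness of $\M(X)$ pass to a further subsequence so that $\m(\und x',F_{n_k})\to\nu$ for some $\nu\in\om{\und x'}$; since $\di\big(\m(\und x,F_{n_k}),\m(\und x',F_{n_k})\big)<\delta''$ for all large $k$, we get $\di(\mu,\nu)\leq\delta''$, so $\mu\in(\om{\und x'})^{\delta''}$ — wait, I must be careful: I need $\delta''$ to depend only on the prescribed $\eps$ and not on the choice of $\mu$ or on $\mathcal F$, which is exactly what the uniform modulus from Lemma~\ref{lem:db-prim} provides. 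This shows $\om{\und x}\subset(\om{\und x'})^{\eps}$ once $\delta$ is chosen small enough; by symmetry $\om{\und x'}\subset(\om{\und x})^{\eps}$, and therefore $\HD(\om{\und x},\om{\und x'})<\eps$. The independence of $\delta$ from $\mathcal F$ is inherited directly from the corresponding independence in Lemma~\ref{lem:db-prim}.

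The main obstacle, and the only place requiring genuine care, is the uniformity: one must ensure that the threshold $\delta$ is extracted as a function of $\eps$ alone, valid simultaneously for all F{\o}lner sequences and for all pairs $\und x,\und x'$. This is handled by invoking the modulus of uniform equivalence in Lemma~\ref{lem:db-prim} (which is $\mathcal F$-independent) rather than any sequence-specific estimate, and by noting that the elementary Prokhorov bound above has no hidden dependence on $X$ beyond $\diam X\leq 1$. A secondary subtlety is the subsequence-of-a-subsequence argument: one should phrase it so that each $\mu\in\om{\und x}$ is matched with \emph{some} $\nu\in\om{\und x'}$ within $\eps$, which is all that the definition of $\HD$ requires; no uniform matching is needed. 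Everything else is bookkeeping, following the template of \cite[Theorem 7]{KLOg}.
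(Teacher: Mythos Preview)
Your proposal is correct and follows precisely the approach the paper intends: the paper gives no independent argument but simply writes ``Repeating the proof of \cite[Theorem 7]{KLOg}'' before stating the theorem, and your proof --- the elementary Prokhorov bound $\di\big(\m(\und x,F),\m(\und x',F)\big)\le\max\{\eta,|A|/|F|\}$, combined with the $\mathcal F$-independent modulus from Lemma~\ref{lem:db-prim} and a compactness subsequence argument to match limit points --- is exactly that template. The care you take with uniformity in $\mathcal F$ is the right emphasis and is handled correctly.
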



\begin{cor}\label{cor:D_B-zero}If $\und{x},\und{x}'\in X^G$ and $D_{B,\mathcal F}(\underline x,\underline x')=0$, then $\om{\underline x}=\om{\underline x'}$ for every $\F$. In particular, if $x,z\in X$, $x$ is generic for $\mu\in\M_G(X)$ along $\F$ and $D_{B,\mathcal F}(x,z)=0$, then also $z$ is generic for $\mu$ along $\mathcal F$.
\end{cor}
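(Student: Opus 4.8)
The plan is to deduce Corollary~\ref{cor:D_B-zero} directly from Theorem~\ref{HD} by a limiting argument. Fix a \Folner sequence $\mathcal F$ and suppose $D_{B,\mathcal F}(\und x,\und x')=0$. For every $\eps>0$ Theorem~\ref{HD} provides a $\delta>0$ with the stated property; since $D_{B,\mathcal F}(\und x,\und x')=0<\delta$, we conclude $\HD(\om{\und x},\om{\und x'})<\eps$. As $\eps>0$ was arbitrary, $\HD(\om{\und x},\om{\und x'})=0$, and because $\om{\und x}$ and $\om{\und x'}$ are closed (indeed compact, being closed subsets of the compact space $\M(X)$), the Hausdorff distance being zero forces $\om{\und x}=\om{\und x'}$. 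This handles the first assertion, and it holds for every $\mathcal F$ because the quantifier ``for every $\F$'' simply means we run the argument with each choice of \Folner sequence in turn.

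For the second assertion, take $x,z\in X$ with $x$ generic for $\mu$ along $\mathcal F$ and $D_{B,\mathcal F}(x,z)=0$. By definition $D_{B,\mathcal F}(x,z)$ is the $D_{B,\mathcal F}$-distance between the trajectories $\und x_G$ and $\und z_G$, so the hypothesis is exactly $D_{B,\mathcal F}(\und x_G,\und z_G)=0$, and the first part gives $\om{\und x_G}=\om{\und z_G}$, i.e. $\om{x}=\om{z}$. Saying that $x$ is generic for $\mu$ along $\mathcal F$ means precisely that $\m(\und x_G,F_n)\to\mu$, equivalently $\om{x}=\{\mu\}$. Hence $\om{z}=\{\mu\}$ as well, which says $\m(\und z_G,F_n)\to\mu$, i.e. $z$ is generic for $\mu$ along $\mathcal F$.

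I do not expect any genuine obstacle here: the corollary is a routine specialization of Theorem~\ref{HD}. The only point requiring a moment's care is the passage from $\HD(\om{\und x},\om{\und x'})=0$ to set equality, which uses that both distribution sets are closed subsets of the compact metric space $(\M(X),\di)$ and that on closed sets the Hausdorff ``distance'' is a genuine metric; both facts are already recorded in the excerpt (the remark that $\om{\und x}$ is closed and nonempty, and the setup declaring $(\M(X),\di)$ compact). Everything else is unwinding definitions.
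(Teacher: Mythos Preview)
Your proof is correct and matches the paper's intended argument: the paper states the corollary without proof, treating it as an immediate consequence of Theorem~\ref{HD}, and your write-up simply spells out that deduction together with the observation that $\om{\und x}$ and $\om{\und x'}$ are closed so that $\HD=0$ forces equality.
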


Theorem~\ref{w} is a straightforward consequence of Theorem~\ref{HD} and Lemma~\ref{ano}.
\begin{thm}\label{w}
The function $(X,D_W)\ni x\to \M_G(x)\in(2^{\M(X)},\HD)$ is uniformly continuous.
\end{thm}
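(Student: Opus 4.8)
The plan is to deduce Theorem~\ref{w} directly from Theorem~\ref{HD} and Lemma~\ref{ano}, since these two results do essentially all the work. Recall that Theorem~\ref{HD} gives, for each $\eps>0$, a threshold $\delta>0$ (independent of the \Folner sequence $\mathcal F$) such that $D_{B,\mathcal F}(\und x,\und x')<\delta$ forces $\HD(\om{\und x},\om{\und x'})<\eps$. Lemma~\ref{ano} asserts that $D_W(\und x,\und x')=\sup_{\mathcal F}D_{B,\mathcal F}(\und x,\und x')$, so $D_W$ dominates every $D_{B,\mathcal F}$ simultaneously. The first step is therefore to fix $\eps>0$, obtain the corresponding $\delta$ from Theorem~\ref{HD}, and observe that if $D_W(x,z)<\delta$ then $D_{B,\mathcal F}(x,z)<\delta$ for \emph{every} \Folner sequence $\mathcal F$, hence $\HD(\om x,\om z)<\eps$ for every $\mathcal F$.

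The second step is to pass from the uniform control of the individual $\om{\cdot}$ sets to control of their union $\M_G(x)=\bigcup_{\mathcal F}\om x$. Here I would argue that $\HD$ is well-behaved under unions: if $\HD(A_i,B_i)<\eps$ for every index $i$ in some family, then $\HD(\bigcup_i A_i,\bigcup_i B_i)\le\eps$. Indeed, each point of $\bigcup_i A_i$ lies in some $A_i$, hence within $\eps$ of some point of $B_i\subset\bigcup_i B_i$, and symmetrically; so $\bigcup_i A_i\subset(\bigcup_i B_i)^{\eps}$ and vice versa. Applying this with $A_{\mathcal F}=\om x$, $B_{\mathcal F}=\om z$ ranging over all \Folner sequences, we get $\HD(\M_G(x),\M_G(z))\le\eps$. (By Remark~\ref{domk} these unions are closed, so the Hausdorff distance is genuinely a metric on them, though the inequality above does not even need closedness.) Combining the two steps: for every $\eps>0$ there is $\delta>0$, independent of everything else, such that $D_W(x,z)<\delta$ implies $\HD(\M_G(x),\M_G(z))\le\eps$, which is precisely uniform continuity of $x\mapsto\M_G(x)$.

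There is essentially no serious obstacle here, since Theorem~\ref{HD} already packages the hard analytic content (the $\mathcal F$-independence of the modulus is exactly what makes the argument go through). The only point requiring a modicum of care is the union step: one must be sure the Hausdorff-distance bound is uniform across the uncountable family of \Folner sequences, which is guaranteed precisely because the $\delta$ in Theorem~\ref{HD} does not depend on $\mathcal F$. A secondary cosmetic point is that Theorem~\ref{HD} is stated with a strict inequality $D_{B,\mathcal F}<\delta$ and $\HD<\eps$; taking the supremum over $\mathcal F$ turns the strict bound on each $\om{\cdot}$ into a non-strict bound $\HD(\M_G(x),\M_G(z))\le\eps$ on the union, which is harmless — one may either accept $\le\eps$ or re-run the argument with $\eps/2$ to recover a strict inequality.

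\begin{proof}
Fix $\eps>0$ and let $\delta>0$ be the constant provided by Theorem~\ref{HD} for $\eps/2$; recall $\delta$ does not depend on the \Folner sequence. Suppose $x,z\in X$ satisfy $D_W(x,z)<\delta$. By Lemma~\ref{ano}, $D_W(x,z)=\sup_{\mathcal F}D_{B,\mathcal F}(x,z)$, so $D_{B,\mathcal F}(x,z)<\delta$ for every \Folner sequence $\mathcal F$. Theorem~\ref{HD} then gives $\HD(\om x,\om z)<\eps/2$ for every $\mathcal F$.

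We claim $\HD(\M_G(x),\M_G(z))\le\eps/2$. Let $\mu\in\M_G(x)=\bigcup_{\mathcal F}\om x$; then $\mu\in\om x$ for some $\mathcal F$, and since $\HD(\om x,\om z)<\eps/2$ there is $\nu\in\om z\subset\M_G(z)$ with $\di(\mu,\nu)<\eps/2$. Hence $\M_G(x)\subset\M_G(z)^{\eps/2}$, and by symmetry $\M_G(z)\subset\M_G(x)^{\eps/2}$. Therefore $\HD(\M_G(x),\M_G(z))\le\eps/2<\eps$. Since $\eps>0$ was arbitrary and $\delta$ depends only on $\eps$, the map $x\mapsto\M_G(x)$ is uniformly continuous from $(X,D_W)$ to $(2^{\M(X)},\HD)$.
\end{proof}
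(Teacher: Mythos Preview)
Your proof is correct and follows exactly the route the paper indicates: the paper simply states that Theorem~\ref{w} is a straightforward consequence of Theorem~\ref{HD} and Lemma~\ref{ano}, and you have spelled out precisely those details, including the union step and the passage from strict to non-strict inequalities.
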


To prove Lemma~\ref{Mx} we need two technical lemmas. We omit their easy proofs.
\begin{lem}\label{szac2}
Let $F\in\Fin(G)$ and $x,y\in X$. If for every $f\in F$ one has $\rho(fx, fy)\leq~\eps$, then $\di(\m(x, F), \m(y,F))\leq\eps$.
\end{lem}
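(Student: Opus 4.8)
The plan is to unwind both empirical measures against the definition of the Prokhorov metric $\di$. Recall that $\m(x,F)$ is the uniform average $\tfrac{1}{|F|}\sum_{f\in F}\hat\delta_{fx}$, so for every Borel set $B\subset X$ one has
\[
\m(x,F)(B)=\frac{|\{f\in F: fx\in B\}|}{|F|},
\]
and likewise for $y$. First I would fix an arbitrary $\delta>\eps$ and an arbitrary Borel set $B\subset X$. If $f\in F$ satisfies $fx\in B$, then since $\rho(fx,fy)\le\eps<\delta$ the point $fy$ lies in $B^{\delta}$; hence $\{f\in F: fx\in B\}\subseteq\{f\in F: fy\in B^{\delta}\}$, and dividing by $|F|$ gives $\m(x,F)(B)\le\m(y,F)(B^{\delta})\le\m(y,F)(B^{\delta})+\delta$.

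Since $B$ was an arbitrary Borel set, this shows that $\delta$ belongs to the set over which the infimum defining $\di(\m(x,F),\m(y,F))$ is taken; and since $\delta>\eps$ was arbitrary, letting $\delta\downarrow\eps$ yields $\di(\m(x,F),\m(y,F))\le\eps$. The hypothesis $\rho(fx,fy)\le\eps$ is symmetric in $x$ and $y$, so the same computation bounds $\m(y,F)(B)$ by $\m(x,F)(B^{\delta})+\delta$; this can either be invoked directly or subsumed by the symmetry of $\di$.

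There is essentially no obstacle here; the only point requiring a moment's care is that the inflation $B^{\delta}$ in the Prokhorov metric uses the \emph{strict} inequality $\rho(w,z)<\delta$, so one cannot take $\delta=\eps$ outright when some $\rho(fx,fy)$ equals $\eps$. Passing to $\delta>\eps$ and then using that $\di$ is defined as an infimum removes this minor issue.
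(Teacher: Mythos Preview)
Your argument is correct: the paper itself omits the proof of this lemma (labelling it ``easy''), and what you have written is exactly the natural direct verification from the definition of $\di$, including the care needed with the strict inequality in $B^{\delta}$.
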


\begin{lem}\label{szac}
Let $\alpha_1,\ldots,\alpha_k, \beta_1,\ldots, \beta_k \in[0,1]$ be  such that $\sum_{i=1}^k\alpha_i=\sum_{i=1}^k\beta_i=1$.
Then for all $\mu_1,\ldots,\mu_k,\nu_1,\ldots,\nu_k\in\mathcal M(X)$ one has 
$$\di\left(\sum_{i=1}^k\alpha_i\mu_i,\sum_{i=1}^k\beta_i\nu_i\right)\leq\frac{1}{2}\sum_{i=1}^k|\alpha_i-\beta_i|+\max\big\{\di(\mu_i,\nu_i)\,:\,1\leq i\leq k\big\}.$$
\end{lem}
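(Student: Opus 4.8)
The final statement is Lemma~\ref{szac}, a convexity-type estimate for the Prokhorov metric on convex combinations of measures. Here is how I would prove it.

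\medskip

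The plan is to reduce to the two extreme cases: first compare $\sum_i \alpha_i\mu_i$ with $\sum_i \alpha_i\nu_i$ (same weights, perturbed measures), then compare $\sum_i \alpha_i\nu_i$ with $\sum_i \beta_i\nu_i$ (perturbed weights, same measures), and finally combine the two by the triangle inequality for $\di$.

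\medskip

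For the first comparison, set $\eps_0 = \max_i \di(\mu_i,\nu_i)$ and let $\eps > \eps_0$. For any Borel set $B\subset X$ and each $i$ we have $\mu_i(B) \leq \nu_i(B^\eps) + \eps$; multiplying by $\alpha_i$ and summing over $i$ gives $\sum_i\alpha_i\mu_i(B) \leq \sum_i \alpha_i\nu_i(B^\eps) + \eps$, since $\sum_i\alpha_i = 1$. As $B$ was arbitrary and $\eps > \eps_0$ arbitrary, $\di\bigl(\sum_i\alpha_i\mu_i,\sum_i\alpha_i\nu_i\bigr) \leq \eps_0 = \max_i\di(\mu_i,\nu_i)$.

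\medskip

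For the second comparison, I would use that $\di(\lambda,\lambda') \leq \|\lambda-\lambda'\|_{TV}$ for finite (signed) measures, together with the standard fact that for probability vectors $\|\alpha-\beta\|_{\ell^1} = \sum_i|\alpha_i-\beta_i|$ equals $2\sum_i(\alpha_i-\beta_i)^+$, which yields $\|\sum_i\alpha_i\nu_i - \sum_i\beta_i\nu_i\|_{TV} \leq \frac12\sum_i|\alpha_i-\beta_i|$ after splitting the difference $\sum_i(\alpha_i-\beta_i)\nu_i$ into its positive and negative parts and bounding each by a genuine probability measure of the appropriate mass. Concretely, for any Borel $B$, $\sum_i\alpha_i\nu_i(B) - \sum_i\beta_i\nu_i(B) \leq \sum_i(\alpha_i-\beta_i)^+\nu_i(B) \leq \sum_i(\alpha_i-\beta_i)^+ = \frac12\sum_i|\alpha_i-\beta_i|$, and since $\nu_i(B) \leq \nu_i(B^\eps)$ trivially, this gives $\di\bigl(\sum_i\alpha_i\nu_i,\sum_i\beta_i\nu_i\bigr) \leq \frac12\sum_i|\alpha_i-\beta_i|$. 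Adding the two bounds via the triangle inequality for $\di$ gives the claim. The only mildly delicate point is the TV-versus-$\ell^1$ bookkeeping for the weights, and degenerate cases where some $\alpha_i$ or $\beta_i$ vanish — but these cause no real trouble since one only ever uses $\nu_i(B)\le 1$ and $\nu_i(B)\le\nu_i(B^\eps)$. Since the paper explicitly calls this proof easy and omits it, a one-line pointer to this splitting argument suffices.
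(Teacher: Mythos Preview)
Your argument is correct: the triangle-inequality split into ``same weights, different measures'' and ``same measures, different weights'' is exactly the natural route, and both pieces are handled cleanly. Since the paper explicitly omits the proof of Lemma~\ref{szac} as easy, there is nothing to compare against; your write-up would serve perfectly well as the missing proof.
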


\begin{lem}\label{Mx}
For every $x\in X$ one has
$\mathcal{M}_G(x)=\mathcal{M}_G(\overline{Gx})$.
\end{lem}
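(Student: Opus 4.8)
The statement asserts that the set of all distribution measures of the trajectory of $x$ (taken over all F\o lner sequences) equals the set of all distribution measures of trajectories of points in the orbit closure $\overline{Gx}$. The inclusion $\M_G(x)\subseteq\M_G(\overline{Gx})$ is essentially immediate from the definitions: any F\o lner sequence along which empirical measures of $\und x_G$ converge witnesses the same limit for a point of $\overline{Gx}$ (namely $x$ itself), so I would dispose of this direction in one sentence. The content is the reverse inclusion $\M_G(\overline{Gx})\subseteq\M_G(x)$: given $y\in\overline{Gx}$, a F\o lner sequence $\F=\{F_n\}$, and $\mu=\lim_n\m(\und y_G,F_n)$, I must produce a (possibly different) F\o lner sequence along which the empirical measures of $\und x_G$ converge to $\mu$.

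\textbf{Key steps.} First, fix $y\in\overline{Gx}$ and a sequence $g_k\in G$ with $g_k x\to y$. The idea is a diagonal argument: approximate $\m(\und y_G,F_n)$ by $\m(\und{(g_kx)}_G,F_n)=\m(\und x_G,F_ng_k)$ for $k$ large depending on $n$. Concretely, for each $n$ I would choose $k(n)$ so large that (i) $\rho(f g_{k(n)}x, fy)$ is small for all $f$ in a suitable finite set — but here is the subtlety: the action need not be equicontinuous, so $g_kx\to y$ does \emph{not} give $fg_kx\to fy$ uniformly over an infinite collection of $f$'s. However, for each \emph{fixed} finite set $F\subset G$, continuity of the action gives $fg_kx\to fy$ for all $f\in F$ simultaneously (a finite intersection of conditions), hence by Lemma~\ref{szac2} $\di(\m(\und x_G,Fg_k),\m(\und y_G,F))\to 0$ as $k\to\infty$. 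So for each $n$, picking $F=F_n$ and then $k(n)$ large enough, I get $\di(\m(\und x_G,F_ng_{k(n)}),\m(\und y_G,F_n))<1/n$. Combined with $\m(\und y_G,F_n)\to\mu$, this yields $\m(\und x_G,F_ng_{k(n)})\to\mu$. Since $\{F_ng_{k(n)}\}_n$ is a right-translate sequence of a F\o lner sequence, it is itself F\o lner (this is noted at the start of the proof of Lemma~\ref{ano}). Hence $\mu\in\M_G(x)$.

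\textbf{Main obstacle.} The one place requiring care is exactly the non-equicontinuity point above: one cannot translate $g_kx\to y$ into uniform control of $\und{(g_kx)}_G\to\und y_G$ in, say, the product topology on all of $X^G$, so a single $k$ will not work for all $n$ at once — the diagonal choice $k(n)$ is essential, and it is legitimate precisely because each $F_n$ is finite. A secondary, purely bookkeeping point: after choosing $k(n)$ one should also, if desired, verify directly that $\{F_n g_{k(n)}\}$ is F\o lner, but since $|g F_n g_{k(n)}\triangle F_n g_{k(n)}| = |gF_n\triangle F_n|$ (right multiplication is a bijection preserving cardinality of symmetric differences), the F\o lner property transfers verbatim from $\F$. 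Everything else is a direct application of Lemma~\ref{szac2} and the triangle inequality for $\di$. I do not expect to need Lemma~\ref{szac}, though it could be used to phrase the approximation in terms of convex combinations if one preferred; the cleaner route is via Lemma~\ref{szac2} applied to the finite set $F_n$.
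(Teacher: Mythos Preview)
You have misread the right-hand side of the identity. In the paper, $\M_G(Y)$ for a compact $G$-space $Y$ denotes the \emph{simplex of $G$-invariant Borel probability measures on $Y$} (see the paragraph introducing $\M_G(X)$ in Section~2), whereas $\M_G(x)$ for a point is the union over all F\o lner sequences of the distribution-measure sets $\om{\und x_G}$. These are two different pieces of notation. Thus $\M_G(\overline{Gx})$ is \emph{not} $\bigcup_{y\in\overline{Gx}}\M_G(y)$; it is the full set of invariant measures on the orbit closure. Your argument establishes only that every distribution measure of every $y\in\overline{Gx}$ lies in $\M_G(x)$, which (via the pointwise ergodic theorem along a tempered F\o lner sequence) recovers the \emph{ergodic} measures on $\overline{Gx}$, but says nothing directly about non-ergodic ones. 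Since $\M_G(x)$ is not a priori convex, this does not close the gap.

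The paper's proof supplies exactly the missing piece. One fixes $\mu\in\M_G(\overline{Gx})$, uses Krein--Milman to write $\mu$ as a limit of rational convex combinations $\sum_j (p_j/q_j)\nu_j$ of ergodic measures, and then---for each such combination---builds a F\o lner set for $x$ by taking a disjoint union of translates $F_k g_i^{(j)}$, where each $g_i^{(j)}$ pushes $x$ close to a regular generic point of $\nu_j$, and the number of translates assigned to $\nu_j$ is proportional to $p_j/q_j$. Lemma~\ref{szac2} controls each summand (this part is in the spirit of your diagonal argument), but Lemma~\ref{szac}---which you explicitly set aside---is what lets one pass from the individual approximations $\di(\m(x,F_kg_i^{(j)}),\nu_j)\le\eps$ to the desired $\di(\m(x,F_{k,\eps}),\sum_j(p_j/q_j)\nu_j)\le\eps$. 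Closedness of $\M_G(x)$ (Remark~\ref{domk}) then finishes. Your translate trick $F_n\mapsto F_n g_{k(n)}$ is the special case of this construction with a single ergodic component; the substance of the lemma lies in handling the convex combinations.
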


\begin{proof}
It is obvious that $\mathcal{M}_G(x)\subset\mathcal{M}_G(\overline{Gx})$. 
Fix $\mu\in \mathcal{M}_G(\overline{Gx})$ and a tempered \Folner sequence $\mathcal F=\{F_n\}_{n\in\N}$.
From the Krein-Milman theorem we get \[\mu=\lim\limits_{n\to\infty}(p_1^{(n)}/q_1^{(n)})\nu_1^{(n)}+\ldots+(p_n^{(n)}/q_n^{(n)})\nu_n^{(n)},\] where for every $n\in\N$ and $1\leq j\leq n$ one has $p_{j}^{(n)},q_{j}^{(n)}\in\N$, $\sum_{j=1}^{n}p_{j}^{(n)}/q_{j}^{(n)} = 1$ and $\nu_{j}^{(n)}\in\M^e_G(\overline{Gx})$.
Fix $n\in\N$. It follows from Remark~\ref{domk} that it is enough to prove that for every $\eps>0$ there exists a \Folner sequence $\mathcal F_{\eps}=\{F_{k,\eps}\}_{k\in\N}$ such that for every $k\in\N$ large enough one has
\[\di\left(\frac{p_1^{(n)}}{q_1^{(n)}}\nu_1^{(n)}+\ldots+\frac{p_n^{(n)}}{q_n^{(n)}}\nu_n^{(n)},\, \m\left(x, F_{k,\eps}\right)\right)<\eps.\]
For $j=1\ldots n$ let $x_j$ be a regular generic point for $\nu_j^{(n)}$ along $\mathcal F$. Pick $K\in\N$ such that if $k\geq K$ and $1\leq i\leq n$ then $\di\left(\nu^{(n)}_i, \m(x_i, F_k)\right)<\eps/2.$
Fix $k\geq K$. Let $\delta>0$ be such that if $f\in F_k$, $a,b\in X$ and $\rho(a,b)<\delta$ then $\rho(fa, fb)<\eps/2$.
Denote $L_j=p_j^{(n)}\cdot(q_1^{(n)}\cdot\ldots\cdot q_n^{(n)})/q_j^{(n)}.$ Using Lemma~\ref{szac2} and the fact that for every $1\leq j\leq n$ there exists infinitely many $g\in G$ such that $\rho(gx, x_j)<\delta$, one can find $g_{1}^{(1)},\ldots, g_{L_1}^{(1)},\ldots, g_{1}^{(n)},\ldots, g_{L_n}^{(n)}\in G$ such that $F_k g_{i}^{(j)}\cap F_kg_{i'}^{(j')}=\emptyset$ for $(i,j)\neq(i',j')$
and for every $1\leq j\leq n$ and $1\leq i\leq L_j$ one has\begin{multline*}
\di\left(\m(x, F_kg_{i}^{(j)}),\nu_j^{(n)}\right)\leq\di\left(\m(x, F_kg_{i}^{(j)}),\m(x_j, F_k)\right)+\di\left(\m(x_j, F_k),\nu_j^{(n)}\right)\leq\eps.\end{multline*}
We set \[F_{k,\eps}=\bigsqcup_{j=1}^{n}\bigsqcup_{i=1}^{L_j}F_kg_{i}^{(j)}.\]
Clearly $\mathcal F_{\eps}=\{F_{k,\eps}\}_{k\in\N}$ is a \Folner sequence as the number of shifted copies of \Folner sets which form $\mathcal F_{k,\eps}$ does not depend on $k$.
By Lemma~\ref{szac} we get
\begin{multline*}\di\left(\m(x,F_{k,\eps}),\, (p_1^{(n)}/q_1^{(n)})\nu_1^{(n)}+\ldots+(p_n^{(n)}/q_n^{(n)})\nu_n^{(n)}\right)=\\=\di\left(\sum_{j=1}^n\sum_{i=1}^{L_j}(\prod_{s=1}^nq_s^{(n)})^{-1}\cdot\m(x, F_kg_{i}^{(j)}),\, \sum_{j=1}^n\sum_{i=1}^{L_j}(\prod_{s=1}^nq_s^{(n)})^{-1}\cdot\nu^{(n)}_j\right)\leq \eps.\qedhere
\end{multline*}
\end{proof}

As a corollary of Theorem~\ref{w} and Lemma~\ref{Mx} we get the following.
\begin{cor}\label{thm:ciagwe'}
The function $(X,D_W)\ni x\to \M_G(\overline{Gx})\in(2^{\M(X)},\HD)$ is uniformly continuous.
\end{cor}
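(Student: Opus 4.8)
The plan is to deduce Corollary~\ref{thm:ciagwe'} by simply composing the two results we already have in hand: Theorem~\ref{w}, which says that $x\mapsto\M_G(x)$ is uniformly continuous as a map $(X,D_W)\to(2^{\M(X)},\HD)$, and Lemma~\ref{Mx}, which identifies $\M_G(x)$ with $\M_G(\overline{Gx})$ for every $x\in X$. Since the two maps $x\mapsto\M_G(x)$ and $x\mapsto\M_G(\overline{Gx})$ are pointwise equal on $X$, uniform continuity of one transfers verbatim to the other.

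Concretely, I would argue as follows. Fix $\eps>0$. By Theorem~\ref{w} there is $\delta>0$ such that for all $x,z\in X$ with $D_W(x,z)<\delta$ one has $\HD(\M_G(x),\M_G(z))<\eps$. Now let $x,z\in X$ satisfy $D_W(x,z)<\delta$. By Lemma~\ref{Mx} we have $\M_G(\overline{Gx})=\M_G(x)$ and $\M_G(\overline{Gz})=\M_G(z)$, hence
\[
\HD\big(\M_G(\overline{Gx}),\M_G(\overline{Gz})\big)=\HD\big(\M_G(x),\M_G(z)\big)<\eps.
\]
Since $\delta$ depends only on $\eps$, this establishes uniform continuity of $x\mapsto\M_G(\overline{Gx})$, which is exactly the claim.

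There is no real obstacle here: the statement is an immediate corollary once Theorem~\ref{w} and Lemma~\ref{Mx} are available, and both are proved earlier in the excerpt. The only point worth a sentence of care is that $\M_G(\overline{Gx})$ is a well-defined element of $(2^{\M(X)},\HD)$ — i.e.\ a non-empty compact (indeed closed, by Remark~\ref{domk}) subset of $\M(X)$ — so that the Hausdorff distance is meaningful; this is guaranteed because $\M_G(\overline{Gx})=\M_G(x)$ and $\M_G(x)$ is closed (Remark~\ref{domk}) and non-empty (it contains any distribution measure along any tempered \Folner sequence). If one wished to make the write-up a touch more self-contained one could also recall, from Lemma~\ref{ano}, that $D_W$ dominates each $D_{B,\mathcal F}$, so that the $\delta$ furnished by Theorem~\ref{HD} feeds directly into Theorem~\ref{w}; but since Theorem~\ref{w} is already stated in the form we need, invoking it together with Lemma~\ref{Mx} suffices.
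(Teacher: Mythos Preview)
Your proof is correct and matches the paper's approach exactly: the corollary is stated there as an immediate consequence of Theorem~\ref{w} and Lemma~\ref{Mx}, which is precisely what you do. The extra remarks about $\M_G(\overline{Gx})$ being a non-empty closed subset are fine but not strictly needed.
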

It follows from Corollary~\ref{thm:ciagwe'} that:
\begin{cor}
The number of ergodic invariant measures supported at $\overline{Gx}$ is lower semicontinuous with respect to $D_W$.
\end{cor}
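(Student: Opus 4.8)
The plan is to deduce the lower semicontinuity of the map $x\mapsto\#\M_G^e(\overline{Gx})$ from the uniform continuity of $x\mapsto\M_G(\overline{Gx})$ established in Corollary~\ref{thm:ciagwe'}, by showing that if $\overline{Gx}$ carries at least $N$ distinct ergodic invariant measures, then so does $\overline{Gy}$ for every $y$ with $\DW(x,y)$ sufficiently small. First I would observe that the ergodic measures are exactly the extreme points of the simplex $\M_G(\overline{Gx})=\conv\M_G^e(\overline{Gx})$, and that by the Krein--Milman / Choquet theory the set of extreme points of $\M_G(\overline{Gx})$ equals $\M_G^e(\overline{Gx})$; in particular $\M_G(\overline{Gx})$ is the closed convex hull of $\M_G^e(\overline{Gx})$, and since $\M_G^e(\overline{Gx})\subset\M_G(\overline{Gx})$ as a closed (by Remark~\ref{domk} and Lemma~\ref{Mx}, together with the fact that the ergodic measures form a $G_\delta$) set, ``$\#\M_G^e(\overline{Gx})\ge N$'' just means there are $N$ pairwise distinct ergodic measures in $\M_G(\overline{Gx})$.

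Next, suppose $\#\M_G^e(\overline{Gx})\ge N$ and fix $N$ distinct ergodic measures $\mu_1,\dots,\mu_N\in\M_G(\overline{Gx})$; let $r=\tfrac12\min_{i\ne j}\di(\mu_i,\mu_j)>0$. By Corollary~\ref{thm:ciagwe'} there is $\delta>0$ such that $\DW(x,y)<\delta$ forces $\HD(\M_G(\overline{Gx}),\M_G(\overline{Gy}))<r/2$, so for each $i$ there is $\nu_i\in\M_G(\overline{Gy})$ with $\di(\mu_i,\nu_i)<r/2$; by the triangle inequality the $\nu_i$ are pairwise distinct, and moreover each ball $B_{\di}(\mu_i,r/2)$ meets $\M_G(\overline{Gy})$ in a set lying in a single ``slice'' of the simplex. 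Then I would argue that the $N$ points $\nu_i$ force at least $N$ extreme points of $\M_G(\overline{Gy})$: if $\M_G(\overline{Gy})$ had fewer than $N$ ergodic measures $\sigma_1,\dots,\sigma_M$ ($M<N$), then each $\nu_i$ is a (Choquet) average of the $\sigma_k$'s, so by the pigeonhole principle two of the $\nu_i$, say $\nu_{i}$ and $\nu_{j}$, are averages whose barycentric coordinates are close (one can arrange, by a compactness/covering argument on the simplex $\conv\{\sigma_1,\dots,\sigma_M\}$ with $M<N$ and the separation $2r$ of the $\mu_i$, a contradiction with $\di(\nu_i,\nu_j)>r$). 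Concretely, the cleanest route is: the diameter of a simplex is attained between two of its vertices, so $\di(\nu_i,\nu_j)\le\max_{k,l}\di(\sigma_k,\sigma_l)$ does not immediately help; instead use that $\{\mu_i\}$ being $2r$-separated and each $\mu_i$ being within $r/2$ of a point of the $M$-vertex simplex $\conv\{\sigma_k\}$ implies, via a standard volumetric/combinatorial estimate, that the simplex has at least $N$ vertices.

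Alternatively, and more robustly, I would phrase the last step using the following general fact, proved by a short Krein--Milman argument: if a compact convex set $C$ in a metric vector space has $M$ extreme points and a subset $D\subset C$ has the property that $D$ is $\eta$-separated and every point of $D$ is within $\eta/3$ of an extreme point of $C$, then $\#D\le M$; apply this with $C=\M_G(\overline{Gy})$, $D=\{\nu_i\}$ after perturbing each $\nu_i$ to a nearby ergodic measure (using that $\M_G^e$ is dense in the set of extreme points in the appropriate sense, or simply that extreme points of $\M_G(\overline{Gy})$ are the ergodic ones and every $\nu_i$ has a barycentric decomposition concentrated near one of them). The main obstacle is precisely this last combinatorial-geometric step: translating ``$N$ well-separated measures persist under a small $\HD$-perturbation of the simplex'' into ``the perturbed simplex still has $\ge N$ vertices'' requires care because a priori the $\nu_i$ need not be extreme in $\M_G(\overline{Gy})$, only close to $\M_G(\overline{Gx})$; I expect the right fix is to note that $\HD$-closeness of the two simplices forces closeness of their extreme-point sets in the sense that every extreme point of one is near the (closed) set of extreme points of the other — this is where I would invoke that $\M_G^e(\overline{Gx})$ is closed (Remark~\ref{domk}, Lemma~\ref{Mx}) — and then the $\mu_i$ themselves may be chosen extreme, so that $N$ separated extreme points of $\M_G(\overline{Gx})$ each sit near an extreme point of $\M_G(\overline{Gy})$, giving $N$ distinct ones there. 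Once that bookkeeping is done, $\liminf_{n}\#\M_G^e(\overline{Gx_n})\ge\#\M_G^e(\overline{Gx})$ follows immediately, which is the assertion.
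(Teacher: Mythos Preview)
Your reduction via Corollary~\ref{thm:ciagwe'} to the statement ``the number of extreme points of a compact convex set is $\HD$-lower semicontinuous'' is exactly the paper's first move. The gap is in the second half: none of your proposed ways to pass from ``$N$ well-separated points $\nu_i$ sit in $\M_G(\overline{Gy})$'' to ``$\M_G(\overline{Gy})$ has at least $N$ extreme points'' actually works as stated. A simplex with $M<N$ vertices can certainly contain $N$ points that are $r$-separated, so pigeonhole on barycentric coordinates gives nothing without further input; and the fallback claim that $\HD$-closeness of two compact convex sets forces closeness of their extreme-point sets is false. For instance, with $K_n$ the triangle with vertices $(0,0),(1,0),(1/2,1/n)$ and $K_0=[0,1]\times\{0\}$ one has $\HD(K_n,K_0)\to0$, yet the vertex $(1/2,1/n)$ of $K_n$ stays far from $\ext K_0=\{(0,0),(1,0)\}$. (Incidentally, $\M_G^e(\overline{Gx})$ is in general only a $G_\delta$, not closed, so that part of the bookkeeping also needs adjustment.)

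The paper sidesteps all of this by arguing the contrapositive with a compactness/limit argument rather than a fixed-$\delta$ perturbation. If $K_n\to K_0$ in $\HD$ and $\liminf_n s_n=m<\infty$, pass to a subsequence along which each $K_{l_n}=\conv\{s_1^n,\dots,s_m^n\}$; extract further so that $s_i^n\to s_i\in K_0$ for each $i$. Any $x\in K_0$ is a limit of points $\sum_i\alpha_i^{(n)}s_i^n\in K_{l_n}$, and a further subsequence of the coefficient vectors converges, giving $x=\sum_i\alpha_i s_i$. Hence $K_0=\conv\{s_1,\dots,s_m\}$ has at most $m$ extreme points. This avoids having to locate the extreme points of the approximating sets near those of the limit, which, as the example shows, one cannot do.
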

\begin{proof}
It is enough to show that the number of extreme points
of a compact convex subset of a locally compact space varies lower semicontinuously with respect to $\HD$. To see this assume that $K_n$ for $n\in\N$ are compact and convex sets such that $H(K_n,K_0)\to 0$ as $n\to\infty$. For every $n\in\N\cup\{0\}$ denote by $s_n$ the number of extreme points of $K_n$. We can assume that $\liminf\limits_{n\to\infty}s_n=m$ for some $m\in\N$ as the claim in the opposite case is obviously true. Let $(l_n)_{n\in N}$ be such that $\liminf\limits_{n\to\infty}s_{l_n}=m$. For every $n\in\N$ let $S^n=(s_1^n,\ldots, s_m^n)$ be a tuple such that $\text{conv}S^n=K_{l_n}$. We can assume that for every $1\leq i\leq m$ one has $s_i^n\to s_i$ for some $s_i\in K_0$. We will show that $\text{conv}\{s_1,\ldots, s_m\}=K_0$. To this end fix $x\in K_0$. For $1\leq i\leq m$ and $n\in\N$ pick $\alpha^{(n)}_m\in[0,1]$ such that:
\begin{itemize}
\item for every $n\in\N$ one has $\alpha_1^{(n)}+\ldots+\alpha_m^{(n)}=1$,
\item $\alpha_1^{(n)}s_1^{n}+\ldots+\alpha_m^{(n)}s_m^{n}\to x$ as $n\to\infty$.
\end{itemize}
Without lost of generality for every $1\leq i\leq m$ let $\alpha_i^{(n)}\to\alpha_i$ for some $\alpha_i\in[0,1]$. Then
$\alpha_1+\ldots+\alpha_m=1$ and $\alpha_1s_1+\ldots+\alpha_ms_m=x$.
This finishes the proof.
\end{proof}

\section{Topological Entropy}
This section generalizes \cite[Section 5]{DI}.
For an open cover $\mathcal U$ of the space $X$ let $\mathcal N(\mathcal U)$ be the minimal cardinality of a subcover of $\mathcal U$. Define $\mathcal U\vee\mathcal V=\{U\cap V\colon U\in\mathcal U, V \in\mathcal V\}$ and note that $\vee$ is an associative operation. Given $F=\{f_1,\ldots,f_s\}\subseteq G$ and $f\in F$ we write $f^{-1}\mathcal U=\{f^{-1}U\colon U\in \mathcal U\}$ and $\mathcal U^F = (f_1^{-1}\mathcal{U})\vee\ldots\vee (f_s^{-1}\mathcal{U})$. By Theorem~\ref{LW}, the sequence $\log\mathcal N(\mathcal U^{F_n})/|F_n|$ has a limit $h(X, G, \mathcal U)$ which is independent of $\F$.
The \emph{topological entropy} of $(X,G)$ is defined as 
\[
\htop(X)=\htop(X,G)=\sup\{h(X,G,\mathcal U)\,:\,\mathcal U\text{ is an open cover of }X\}.
\]

Let $F\in\Fin(G)$  and $\delta, \eps\in(0,1]$. Let $\rho_F(x,y)= \max\{\rho(gx,gy)\colon g\in F\}$ for $x,y\in X$. A set $Z\subset X$ is called
\begin{itemize}[leftmargin=*]
\item \emph{$(F,\eps)$-separated} if $\rho_F(x,z)>\eps$ for $x,z\in Z$ with $x\neq z$,
\item \emph{$(F,\eps)$-spanning} if for each $x\in X$ there is $z\in Z$ such that $\rho_F(x,z)\leq\eps$,
\item \emph{$(F,\eps,\delta)$-spanning} if for every $x\in X$ there exists $z\in Z$ such that 
\[|\{g\in F\,:\,\rho(gx, gz)\leq\eps\}|>(1-\delta)\cdot|F|,\]
\item \emph{$(F,\eps,\delta)$-separated} if for every $x,z\in Z$ either $x=z$ or
\[
|\{g\in F\,:\,\rho(gx,gz)>\eps\}|>\delta\cdot|F|.
\]
\end{itemize}
Fix a \Folner sequence $\mathcal F=\{F_n\}_{n\in\N}$.
Denote by $s(n,\eps)$, $r(n,\eps)$, $\tilde{s}(n,\eps,\delta)$, $\tilde{r}(n,\eps, \delta)$ the maximal cardinality of an $(F_n, \eps)$-separated set, minimal cardinality of an $(F_n, \eps)$-spanning set, maximal cardinality of an $(F_n, \eps, \delta)$-separated set, and the minimal cardinality of an $(F_n, \eps, \delta)$-spanning set, respectively.
Define
\begin{align*}
h_s(X,G)&=\lim\limits_{\eps\searrow 0^+}\limsup\limits_{n\to\infty}\frac{\log s(n,\eps)}{|F_n|},& \tilde{h}_s(X,G)&=\lim\limits_{\delta\searrow 0^+}\lim\limits_{\eps\searrow 0^+}\limsup\limits_{n\to\infty}\frac{\log \tilde s(n,\eps, \delta)}{|F_n|},\\
h_r(X,G)&=\lim\limits_{\eps\searrow 0^+}\limsup\limits_{n\to\infty}\frac{\log r(n,\eps)}{|F_n|},&\tilde{h}_r(X,G)&=\lim\limits_{\delta\searrow 0^+}\lim\limits_{\eps\searrow 0^+}\limsup\limits_{n\to\infty}\frac{\log \tilde r(n,\eps, \delta)}{|F_n|}.
\end{align*}
 It is known that $h_s(X,G)=h_r(X,G)=\htop(X,G)$ and we will show that $\tilde{h}_s(X,G)=\tilde{h}_r(X,G)=\htop(X,G)$.

\begin{lem}[\cite{Shields}, Lemma I.5.4]\label{suma}
Let $E_S(\eps)=-\eps\log \eps - (1-\eps)\log (1-\eps)$. If $0<\eps\le 1/2$, then
\[
\sum_{j=0}^{\lfloor n \eps \rfloor}{\binom{n}{j}}\le 2^{n\cdot E_S(\eps)} \quad\text{for $n\ge 1$}.
\]
\end{lem}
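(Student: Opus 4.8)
The plan is to derive the bound from the binomial theorem by a weighting argument. First I would rewrite the right-hand side multiplicatively: interpreting $\log$ as the base-$2$ logarithm (as is forced by the factor $2^{n\cdot E_S(\eps)}$), one has
\[
2^{-n\cdot E_S(\eps)}=\eps^{n\eps}(1-\eps)^{n(1-\eps)},
\]
so the asserted bound is equivalent to
\[
\eps^{n\eps}(1-\eps)^{n(1-\eps)}\sum_{j=0}^{\lfloor n\eps\rfloor}\binom{n}{j}\le 1.
\]
The idea is to dominate the common factor $\eps^{n\eps}(1-\eps)^{n(1-\eps)}$ by the individual binomial weights $\eps^{j}(1-\eps)^{n-j}$ for each $j$ in the range of summation, and then invoke the identity $\sum_{j=0}^{n}\binom{n}{j}\eps^{j}(1-\eps)^{n-j}=1$.

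Next I would establish the pointwise inequality: for every integer $j$ with $0\le j\le\lfloor n\eps\rfloor$,
\[
\eps^{n\eps}(1-\eps)^{n(1-\eps)}\le \eps^{j}(1-\eps)^{n-j}.
\]
To see this, factor the left-hand side as
\[
\eps^{n\eps}(1-\eps)^{n(1-\eps)}=\eps^{j}(1-\eps)^{n-j}\cdot\left(\frac{\eps}{1-\eps}\right)^{n\eps-j}.
\]
This is exactly where the hypothesis $\eps\le 1/2$ enters: it guarantees $\eps\le 1-\eps$, hence $\eps/(1-\eps)\le 1$; and since $j\le n\eps$ the exponent $n\eps-j$ is nonnegative, so the last factor is at most $1$. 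This yields the desired pointwise bound.

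Finally I would sum the pointwise inequality over $0\le j\le\lfloor n\eps\rfloor$ and extend the (nonnegative) binomial sum to all $0\le j\le n$:
\[
\eps^{n\eps}(1-\eps)^{n(1-\eps)}\sum_{j=0}^{\lfloor n\eps\rfloor}\binom{n}{j}\le\sum_{j=0}^{\lfloor n\eps\rfloor}\binom{n}{j}\eps^{j}(1-\eps)^{n-j}\le\sum_{j=0}^{n}\binom{n}{j}\eps^{j}(1-\eps)^{n-j}=1,
\]
which is the reformulated claim. The argument is essentially self-contained; the only genuine obstacle is spotting the correct factorization and recognizing that the monotonicity of $t\mapsto t^{\,n\eps-j}$ combined with $\eps/(1-\eps)\le 1$ is precisely what makes the single hypothesis $\eps\le 1/2$ carry the whole proof. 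No delicate estimation of binomial coefficients is required, so I expect no substantive difficulty beyond this observation.
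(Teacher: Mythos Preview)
Your argument is correct: the factorization
\[
\eps^{n\eps}(1-\eps)^{n(1-\eps)}=\eps^{j}(1-\eps)^{n-j}\Bigl(\tfrac{\eps}{1-\eps}\Bigr)^{n\eps-j}
\]
together with $\eps/(1-\eps)\le 1$ and $n\eps-j\ge 0$ gives the pointwise bound, and the binomial identity finishes it. Your reading of $\log$ as base~$2$ inside $E_S$ is the only consistent one given the $2^{n\cdot E_S(\eps)}$ on the right, and it matches how the paper later applies the lemma (the extra factor $\log 2$ appearing in the estimates in Lemma~\ref{lm1} and Theorem~\ref{entropiaciagla} confirms that the ambient $\log$ in the paper is not base~$2$, while the one inside $E_S$ is).

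As for comparison with the paper: the paper does not prove this lemma at all---it merely quotes it from Shields. Your proof is exactly the classical weighting (Chernoff-type) argument that Shields gives, so there is nothing to contrast.
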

Lemma~\ref{lm1} is an analog of \cite[Lemma 3]{DI}.
\begin{lem}\label{lm1}
For any finite open cover $\mathcal U$ of $X$ one has $h(\mathcal U, G)\leq \tilde{h}_r(X,G)$.
\end{lem}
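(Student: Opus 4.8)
The plan is to relate a finite open cover $\mathcal U$ to the $(F_n,\eps,\delta)$-spanning sets by choosing $\eps$ to be a Lebesgue number of $\mathcal U$, so that every $\rho$-ball of radius $\eps$ is contained in some member of $\mathcal U$. The key point is that an $(F_n,\eps,\delta)$-spanning set $Z$ gives us, for each $x\in X$, a $z\in Z$ with $\rho(gx,gz)\le\eps$ for all $g$ in a subset $F\subseteq F_n$ with $|F|>(1-\delta)|F_n|$; hence $x$ lies in the atom of $\mathcal U^{F}$ determined by the choices of cover elements containing $gz$ for $g\in F$. So $X$ is covered by the sets of the form (atom of $\mathcal U^{F}$) as $z$ ranges over $Z$ and $F$ ranges over the $\binom{|F_n|}{\lfloor\delta|F_n|\rfloor}$ or fewer subsets of $F_n$ of size $>(1-\delta)|F_n|$; refining each such atom arbitrarily to an atom of the full $\mathcal U^{F_n}$ multiplies the count by at most $\mathcal N(\mathcal U)^{\delta|F_n|}$ (the missing $\le\delta|F_n|$ coordinates each contribute a factor $\le\mathcal N(\mathcal U)$).

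First I would fix a finite open cover $\mathcal U$, let $\eps>0$ be smaller than a Lebesgue number of $\mathcal U$, and fix $\delta\in(0,1/2]$. Then I would carry out the counting above to obtain, for each $n$,
\[
\mathcal N(\mathcal U^{F_n})\le \tilde r(n,\eps,\delta)\cdot\sum_{j=0}^{\lfloor\delta|F_n|\rfloor}\binom{|F_n|}{j}\cdot\mathcal N(\mathcal U)^{\lfloor\delta|F_n|\rfloor}.
\]
Applying Lemma~\ref{suma} bounds the binomial sum by $2^{|F_n|E_S(\delta)}$. Taking $\frac1{|F_n|}\log(\cdot)$ and letting $n\to\infty$ yields
\[
h(X,G,\mathcal U)\le \limsup_{n\to\infty}\frac{\log\tilde r(n,\eps,\delta)}{|F_n|}+E_S(\delta)\log 2+\delta\log\mathcal N(\mathcal U).
\]
Now let $\eps\searrow 0$ (the left side does not depend on $\eps$, and the $\limsup$ term only increases, so actually I should be a little careful: I want to take $\eps$ small, so I bound the $\eps$-term by $\lim_{\eps\searrow0}\limsup_n\frac{\log\tilde r(n,\eps,\delta)}{|F_n|}$, which is legitimate since the Lebesgue-number condition only requires $\eps$ small), and then let $\delta\searrow 0$, using $E_S(\delta)\to 0$ and $\delta\log\mathcal N(\mathcal U)\to 0$. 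This gives $h(X,G,\mathcal U)\le\tilde h_r(X,G)$, as desired.

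The main obstacle is the bookkeeping in the covering argument: one must check that the collection of atoms of $\mathcal U^{F}$ over all large subsets $F\subseteq F_n$, each refined to atoms of $\mathcal U^{F_n}$, genuinely covers $X$ and has cardinality bounded as claimed. Concretely, for a fixed $z\in Z$ the number of atoms of $\mathcal U^{F_n}$ whose projection to the coordinates in some fixed $F$ equals the prescribed atom of $\mathcal U^{F}$ is at most $\mathcal N(\mathcal U)^{|F_n\setminus F|}\le\mathcal N(\mathcal U)^{\lfloor\delta|F_n|\rfloor}$, and there are at most $\sum_{j\le\lfloor\delta|F_n|\rfloor}\binom{|F_n|}{j}$ choices of the complement $F_n\setminus F$; multiplying by $|Z|$ and minimizing over spanning sets $Z$ gives the bound. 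Everything else is the routine passage to the limit via Theorem~\ref{LW} (which guarantees $\log\mathcal N(\mathcal U^{F_n})/|F_n|\to h(X,G,\mathcal U)$) and Lemma~\ref{suma}.
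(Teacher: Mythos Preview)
Your proof is correct and follows essentially the same route as the paper's: the paper also takes $\eps$ to be half a Lebesgue number of $\mathcal U$, and for each $z\in Z$ and each large subset $K\subseteq F_n$ forms the set $V(z,K)=\bigcap_{g\in K}g^{-1}\{a:\rho(gz,a)<\eps\}$ (contained in an atom of $\mathcal U^K$), then refines by $\bigvee_{g\in F_n\setminus K}g^{-1}\mathcal U$ to obtain a cover $\mathcal W$ refining $\mathcal U^{F_n}$ of cardinality at most $\tilde r(n,\eps,\delta)\cdot 2^{|F_n|E_S(\delta)}\cdot|\mathcal U|^{\delta|F_n|}$, exactly your three-factor bound. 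Your use of $\mathcal N(\mathcal U)$ in place of $|\mathcal U|$ is a harmless sharpening, and your explicit handling of the $\eps\searrow 0$ step (which the paper leaves implicit via the monotonicity of $\tilde r(n,\eps,\delta)$ in $\eps$) is correct.
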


\begin{proof}
Let $2\eps$ be the Lebesgue number of $\mathcal U$. Fix $\delta\in(0,1/2)$.
We will show that \begin{equation}\label{e1}
h(X,\mathcal U, G)\leq \limsup\limits_{n\to\infty}\frac{1}{|F_n|}\log r(n,\eps,\delta)+\log 2\cdot E_S(\delta)+\delta.
\end{equation}
Fix $n\in\N$. Let $Z$ be a $(F_n,\eps, \delta)$-spanning set such that $|Z|=r(n,\eps,\delta)$.
For every $K\subset F_n$ and $y\in Z$ define \[V(y,K)=\bigcap_{g\in K}g^{-1}\big(\{a\in X\,:\,\rho(gy,a)<\eps\}),\]
Notice that $V(y,K)\subset \bigvee_{g\in K}g^{-1}\mathcal U$.
Therefore \[\mathcal W=\bigcup_{K\subset F_n,\\ |K|>|F_n|(1-\delta)}\left(\{V(y,K)\,:\,y\in Z\}\vee\bigvee_{g\in F_n\setminus K}g^{-1}\mathcal U\right)\]
is a refinement of a cover $\mathcal U^{F_n}$. Hence
\begin{multline*}
\mathcal N\left(\mathcal U^{F_n}\right)\leq\mathcal N(\mathcal W)\leq \big|\big\{K\subset F_n\,:\,|K|>|F_n|\cdot(1-\delta)\big\}\big|\cdot |Z|\cdot |\mathcal U|^{|F_n|\cdot\delta}=\\
=\sum_{k=0}^{\lceil |F_n|\delta\rceil}{| F_n| \choose k}\cdot |Z|\cdot |\mathcal U|^{|F_n|\cdot\delta}
\leq 2^{|F_n|E_S(\delta)}\cdot r(n,\eps,\delta)\cdot |\mathcal U|^{|F_n|\cdot\delta}.
\end{multline*}
This implies (\ref{e1}). To finish the proof note that $E_S(\delta)\to 0$ as $\delta\to 0$.
\end{proof}

\begin{thm}
One has $\tilde{h}_s(X,G)=\tilde{h}_r(X,G)=\htop(X,G)$.
\end{thm}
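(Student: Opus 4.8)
The plan is to deduce the two asserted equalities from the chain
\[
\htop(X,G)\ \le\ \tilde h_r(X,G)\ \le\ \tilde h_s(X,G)\ \le\ \htop(X,G).
\]
As a preliminary I would record that $\tilde s(n,\eps,\delta)$ and $\tilde r(n,\eps,\delta)$ are non-increasing in each of $\eps$ and $\delta$, so that all the iterated limits occurring here exist in $[0,\infty]$ and equal the corresponding suprema over $\eps,\delta>0$; this is what allows one to compare the three quantities while taking $n\to\infty$, $\eps\searrow0$, $\delta\searrow0$ in whatever order is convenient.

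For the first inequality I would combine Lemma~\ref{lm1} with the observation that $\htop(X,G)$ is already the supremum of $h(X,\mathcal U,G)$ over \emph{finite} open covers: any open cover of the compact space $X$ has a finite subcover $\mathcal U'$, and since $\mathcal U'$ refines $\mathcal U$ one has $h(X,\mathcal U,G)\le h(X,\mathcal U',G)$. Taking the supremum over finite open covers $\mathcal U$ in the estimate $h(X,\mathcal U,G)\le\tilde h_r(X,G)$ of Lemma~\ref{lm1} then gives $\htop(X,G)\le\tilde h_r(X,G)$.

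For the middle inequality I would use the classical fact that a set $Z$ of maximal cardinality among $(F_n,\eps,\delta)$-separated sets is maximal with respect to inclusion (such sets are finite, by compactness of $X$ and continuity of the action), and that such a $Z$ is automatically $(F_n,\eps,\delta')$-spanning for every $\delta'>\delta$: for $x\notin Z$ maximality produces $z\in Z$, $z\neq x$, with $|\{g\in F_n:\rho(gx,gz)>\eps\}|\le\delta|F_n|<\delta'|F_n|$, hence $|\{g\in F_n:\rho(gx,gz)\le\eps\}|>(1-\delta')|F_n|$ (and $x\in Z$ is trivial). Therefore $\tilde r(n,\eps,\delta')\le\tilde s(n,\eps,\delta)$; letting $n\to\infty$, then $\eps\searrow0$, and finally $\delta\searrow0$ along, say, $\delta'=2\delta$ yields $\tilde h_r(X,G)\le\tilde h_s(X,G)$.

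The last inequality is the one that needs a little care, although it is short; the subtlety is to see that the $\delta$-relaxation of the separation condition does not actually enlarge separated sets beyond what a cover of small mesh already controls. Given $\eps>0$ I would fix, by compactness, a finite open cover $\mathcal U_\eps$ of $X$ all of whose members have diameter less than $\eps$. If two points $x,z$ lie in a common element $\bigcap_{g\in F_n}g^{-1}U_g$ of $\mathcal U_\eps^{F_n}$, then $\rho(gx,gz)<\eps$ for \emph{every} $g\in F_n$, so $\{g\in F_n:\rho(gx,gz)>\eps\}=\emptyset$; since $\delta|F_n|>0$ for any $\delta>0$, $x$ and $z$ cannot be distinct elements of an $(F_n,\eps,\delta)$-separated set. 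Hence each element of a minimal subcover of $\mathcal U_\eps^{F_n}$ contains at most one point of such a set, so $\tilde s(n,\eps,\delta)\le\mathcal N(\mathcal U_\eps^{F_n})$, and consequently
\[
\limsup_{n\to\infty}\frac{\log\tilde s(n,\eps,\delta)}{|F_n|}\ \le\ \lim_{n\to\infty}\frac{\log\mathcal N(\mathcal U_\eps^{F_n})}{|F_n|}\ =\ h(X,\mathcal U_\eps,G)\ \le\ \htop(X,G),
\]
the limit existing by Theorem~\ref{LW}. Since this bound is independent of $\delta$ and of $\eps$, letting $\eps\searrow0$ and then $\delta\searrow0$ gives $\tilde h_s(X,G)\le\htop(X,G)$, closing the chain and hence proving the theorem.
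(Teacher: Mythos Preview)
Your proof is correct. The overall architecture matches the paper's: both argue via Lemma~\ref{lm1} that $\htop(X,G)\le\tilde h_r(X,G)$, and both close the chain with an upper bound $\tilde h_s(X,G)\le\htop(X,G)$.

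The one genuine difference is in that upper bound. The paper observes directly that every $(F_n,\eps,\delta)$-separated set is in particular $(F_n,\eps)$-separated (a single $g$ with $\rho(gx,gz)>\eps$ suffices, and the $\delta$-condition gives more than $\delta|F_n|>0$ of them), so $\tilde s(n,\eps,\delta)\le s(n,\eps)$ and hence $\tilde h_s(X,G)\le h_s(X,G)=\htop(X,G)$ with no further work. Your argument instead builds a cover $\mathcal U_\eps$ of mesh $<\eps$ and compares $\tilde s(n,\eps,\delta)$ with $\mathcal N(\mathcal U_\eps^{F_n})$. This is perfectly valid but effectively re-derives part of the classical equality $h_s=\htop$; the paper's one-line observation is shorter. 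Conversely, the paper states the sandwich $\tilde r(n,\eps,\delta)\le\tilde s(n,\eps,\delta)\le\tilde r(n,2\eps,\delta)$ to get $\tilde h_s=\tilde h_r$ in one stroke, whereas you only prove $\tilde h_r\le\tilde h_s$ (which is all the chain needs) and are more careful about the strict-versus-nonstrict inequality by passing to $\delta'=2\delta$.
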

\begin{proof}
Note that for all $n\in\N$, $\eps>0$ and $\delta\in(0,1]$ one has
$r(n,\eps,\delta)\leq s(n,\eps,\delta)\leq r(n,2\eps, \delta)$ and so $\tilde{h}_s(X,G)=\tilde h_r(X,G)$.
Using Lemma~\ref{lm1} we get $\htop(X,G)\leq \tilde h_r(X,G)=\tilde h_s(X,G)\leq h_s(X,G)=\htop(X,G)$ and the claim follows.
\end{proof}

For any $x\in X$ define $h(x)=\htop(\overline{Gx},G)$.
We will show that $h(x)$ vary lower semicontinuously. 

For $x\in X$, $n\in\N$, $\eps>0$ and $\delta\in(0,1]$ let $s(x,n,\eps,\delta)$ be the maximal cardinality of $(F_n, \eps, \delta)$-separated set for $(\overline{Gx},G)$.
Lemma~\ref{l3} is proved in the same way as \cite[Lemma 4]{DI}.
\begin{lem}\label{l3}
If $D^*(\{g\in G\,:\, \rho(gx, gy)>\eps\})<\delta$, then $s(y,n,\eps, \delta)\geq s(x, n, 3\eps, 3\delta)$ for all $n$ big enough.
\end{lem}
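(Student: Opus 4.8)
The plan is to lift a maximal separated set in $\overline{Gx}$ to one in $\overline{Gy}$ by passing to accumulation points of trajectories. Set $B=\{g\in G:\rho(gx,gy)>\eps\}$, so that $D^*(B)<\delta$ by hypothesis. Since for every \Folner sequence one has $D^*(B)=\lim_{n\to\infty}D^*_{F_n}(B)$ (see \cite[Lemma 2.9]{DHZ}), there is $N$ such that $D^*_{F_n}(B)<\delta$ for all $n\ge N$; explicitly, this says $\sup_{g\in G}|F_ng\cap B|<\delta|F_n|$. From now on fix $n\ge N$ and let $Z=\{z_1,\dots,z_s\}\subseteq\overline{Gx}$ be an $(F_n,3\eps,3\delta)$-separated set of maximal cardinality $s=s(x,n,3\eps,3\delta)$.

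For each $i$ I would fix a sequence $\{g^{(i)}_k\}_{k\in\N}\subseteq G$ with $g^{(i)}_kx\to z_i$ and, passing to a subsequence (using compactness of $X$, and noting that $g^{(i)}_kx\to z_i$ is preserved), also arrange that $g^{(i)}_ky\to w_i$ for some $w_i\in\overline{Gy}$. The key point is that $w_i$ cannot be far from $z_i$ along $F_n$: for each $h\in F_n$ one has $\rho(hg^{(i)}_kx,hg^{(i)}_ky)\to\rho(hz_i,hw_i)$ by continuity of the action and of $\rho$, and since $F_n$ is finite, for all large enough $k$
\[
\{h\in F_n:\rho(hz_i,hw_i)>\eps\}\subseteq\{h\in F_n:\rho(hg^{(i)}_kx,hg^{(i)}_ky)>\eps\}=F_n\cap B\bigl(g^{(i)}_k\bigr)^{-1}.
\]
The cardinality of the last set equals $|F_ng^{(i)}_k\cap B|\le D^*_{F_n}(B)\,|F_n|<\delta|F_n|$, hence $\bigl|\{h\in F_n:\rho(hz_i,hw_i)>\eps\}\bigr|<\delta|F_n|$ for every $1\le i\le s$.

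It then remains to check that $W=\{w_1,\dots,w_s\}$ is $(F_n,\eps,\delta)$-separated; as its points are then automatically distinct, this gives $s(y,n,\eps,\delta)\ge|W|=s=s(x,n,3\eps,3\delta)$. Fix $i\ne j$. Since $Z$ is $(F_n,3\eps,3\delta)$-separated, the set $A_{ij}=\{h\in F_n:\rho(hz_i,hz_j)>3\eps\}$ satisfies $|A_{ij}|>3\delta|F_n|$. For $h\in A_{ij}$ with $\rho(hz_i,hw_i)\le\eps$ and $\rho(hz_j,hw_j)\le\eps$, the triangle inequality gives $\rho(hw_i,hw_j)\ge\rho(hz_i,hz_j)-\rho(hz_i,hw_i)-\rho(hz_j,hw_j)>3\eps-\eps-\eps=\eps$. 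Removing from $A_{ij}$ the fewer than $\delta|F_n|$ elements where $\rho(hz_i,hw_i)>\eps$ and the fewer than $\delta|F_n|$ elements where $\rho(hz_j,hw_j)>\eps$ leaves more than $3\delta|F_n|-\delta|F_n|-\delta|F_n|=\delta|F_n|$ elements $h\in F_n$ with $\rho(hw_i,hw_j)>\eps$, as needed.

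The main obstacle is the limiting argument in the middle paragraph: one has to use the finiteness of $F_n$ to select a single index $k$ that works simultaneously for all $h\in F_n$, and one needs the density bound $D^*_{F_n}(B)<\delta$ for all large $n$ rather than merely in the Banach-density limit — both of which are straightforward once spelled out. Everything else is bookkeeping with the triangle inequality and counting subsets of $F_n$.
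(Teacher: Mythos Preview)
Your argument is correct and is essentially the approach the paper has in mind: the paper does not write out a proof but refers to \cite[Lemma 4]{DI}, whose method is precisely the one you carry out---approximate each point of a maximal $(F_n,3\eps,3\delta)$-separated set in $\overline{Gx}$ by orbit points $g_k^{(i)}x$, pass to limits of $g_k^{(i)}y$ in $\overline{Gy}$, use the uniform bound $D^*_{F_n}(B)<\delta$ to control $|\{h\in F_n:\rho(hz_i,hw_i)>\eps\}|$, and then apply the triangle inequality and a counting argument to conclude $(F_n,\eps,\delta)$-separation. The two points you flag at the end (choosing a single $k$ working for all $h\in F_n$, and using that $D^*(B)=\lim_n D^*_{F_n}(B)$ gives the bound for all large $n$) are exactly the places where one must be careful, and you handle them correctly.
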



Theorem~\ref{thm:poll} is an analog of \cite[Theorem 3]{DI}.

\begin{thm}\label{thm:poll}
The function $X\ni x\mapsto h(x)\in \mathbb R_+\cap\{\infty\}$ is lower semicontinuous with respect to $D_W$ and $D_W'$.
\end{thm}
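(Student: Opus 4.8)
The plan is to reduce the lower semicontinuity of $x\mapsto h(x)$ to the combinatorial comparison of separated sets provided by Lemma~\ref{l3}, using the characterization $\htop(\overline{Gx},G)=\tilde h_s(\overline{Gx},G)$ and the fact that $D_W$ and $D_W'$ are uniformly equivalent (Corollary~\ref{cor:eqi}, Theorem~\ref{tog}). Since $D_W$ and $D_W'$ induce the same topology, it suffices to prove the statement for $D_W'$, and there one controls precisely the upper Banach density of the set of coordinates where two trajectories differ by more than $\eps$ — which is exactly the hypothesis of Lemma~\ref{l3}.

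First I would fix $x\in X$ and a sequence $\{x_m\}_{m\in\N}\subset X$ with $D_W'(x_m,x)\to 0$; the goal is $\liminf_{m\to\infty}h(x_m)\ge h(x)$. We may assume $h(x)>0$, otherwise there is nothing to prove, and we may pass to a subsequence realizing the liminf. Fix $t<h(x)$; by the entropy formulas it is enough to produce, for every sufficiently large $m$, the bound $h(x_m)\ge t$. Choose $\eps>0$ and $\delta\in(0,1/2)$ small enough that
\[
\limsup_{n\to\infty}\frac{\log s(x,n,3\eps,3\delta)}{|F_n|}>t,
\]
which is possible because $\tilde h_s(\overline{Gx},G)=\htop(\overline{Gx},G)=h(x)>t$ and the double limit in $\delta$ and $\eps$ is a supremum of the inner $\limsup$ over small $\eps,\delta$. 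Next, by the definition of $D_W'$ (Theorem~\ref{tog}), for all $m$ large enough one has $D^*(\{g\in G:\rho(gx,gx_m)>\eps\})<\delta$, so Lemma~\ref{l3} applies with $x,x_m$ in place of $x,y$ and gives, for all $n$ large enough (depending on $m$),
\[
s(x_m,n,\eps,\delta)\ge s(x,n,3\eps,3\delta).
\]
Taking $\log$, dividing by $|F_n|$, and passing to $\limsup_{n\to\infty}$ yields
\[
\limsup_{n\to\infty}\frac{\log s(x_m,n,\eps,\delta)}{|F_n|}\ \ge\ \limsup_{n\to\infty}\frac{\log s(x,n,3\eps,3\delta)}{|F_n|}\ >\ t,
\]
and since $\tilde h_s(\overline{Gx_m},G)$ is the limit over $\delta\searrow 0$ of the limit over $\eps\searrow 0$ of the left-hand side (and the inner expressions are monotone in the right direction), we obtain $h(x_m)=\htop(\overline{Gx_m},G)=\tilde h_s(\overline{Gx_m},G)\ge t$ for all large $m$. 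Hence $\liminf_{m\to\infty}h(x_m)\ge t$, and letting $t\nearrow h(x)$ finishes the argument.

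The main obstacle is the bookkeeping of the three nested limit operations ($\eps\searrow 0$, $\delta\searrow 0$, $n\to\infty$) together with the fact that Lemma~\ref{l3} forces a factor-of-three loss in both $\eps$ and $\delta$: one must be careful that $s(x,n,3\eps,3\delta)$ still has positive exponential growth rate, which is exactly why one picks $\eps,\delta$ small \emph{after} fixing $t<h(x)$ rather than before. A secondary subtlety is that the bound from Lemma~\ref{l3} holds only for $n$ large depending on $m$, but this is harmless since we take $\limsup_{n\to\infty}$ for each fixed $m$ separately. Everything else — the reduction from $D_W$ to $D_W'$, and the identity $\htop=\tilde h_s$ — is already available from the excerpt.
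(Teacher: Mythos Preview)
Your proof is correct and follows essentially the same route as the paper: reduce to $D_W'$ via Corollary~\ref{cor:eqi}, use $\htop=\tilde h_s$, and invoke Lemma~\ref{l3} to transfer $(F_n,\eps,\delta)$-separated sets from $\overline{Gx}$ to $\overline{Gx_m}$. The only cosmetic difference is that the paper works in $\eps$--$\delta$ form (for every $y$ with $D_W'(x,y)$ small) rather than with sequences, and it applies Lemma~\ref{l3} at scale $(\eps/3,\delta/3)$ to land at $(\eps,\delta)$, whereas you pre-scale by choosing $(3\eps,3\delta)$ first; your bookkeeping of the factor-of-three loss is in fact slightly cleaner.
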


\begin{proof}
By Corollary~\ref{cor:eqi} it is enough to consider only $D_W'$.
Fix $x\in X$ such that $h(x)<\infty$ and $\eta>0$. There exists $\delta,\eps>0$ such that $$\lim\limits_{n\to\infty}s(x,n,\eps, \delta)>h(x)-\eta.$$ It follows from Lemma~\ref{l3} that for every $y\in X$ such that $D_W'(x,y)<\delta$ one has
$$h(y)\geq \lim\limits_{n\to\infty}s(y,n,\eps/3, \delta/3)>h(x)-\eta.$$
If $h(x)=\infty$ then the proof is similar.
\end{proof}

As a corollary of Theorem~\ref{thm:poll} and Corollary~\ref{cor:eqi} we get the following.

\begin{thm}\label{thm:pol}
The function $x\mapsto h(x)$ is lower semicontinuous with respect to $D_W$-pseudometric on $X$ and usual metric on $\mathbb R_+\cup\{\infty\}$.
\end{thm}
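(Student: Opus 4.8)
The plan is to deduce Theorem~\ref{thm:pol} directly from Theorem~\ref{thm:poll} together with Corollary~\ref{cor:eqi}, without reproving anything. The two statements differ only in which pseudometric on $X$ one works with: Theorem~\ref{thm:poll} already asserts lower semicontinuity of $x\mapsto h(x)$ with respect to both $D_W'$ and $D_W$, while Theorem~\ref{thm:pol} restates this for $D_W$. So the content of the present theorem is essentially a packaging of the $D_W$-part of Theorem~\ref{thm:poll}; the only thing worth spelling out is why lower semicontinuity transfers between uniformly equivalent pseudometrics.

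First I would recall the definition of lower semicontinuity with respect to a pseudometric from Section~2: $f\colon(Y,p)\to\R\cup\{\infty\}$ is lower semicontinuous if $p(x_n,x)\to 0$ implies $\liminf_{n\to\infty}f(x_n)\ge f(x)$. Then I would observe that if $p$ and $q$ are uniformly equivalent pseudometrics on $Y$ — meaning there is a modulus of uniform equivalence, so $p(x_n,x)\to 0$ holds if and only if $q(x_n,x)\to 0$ — then $f$ is lower semicontinuous with respect to $p$ if and only if it is lower semicontinuous with respect to $q$, since the two notions quantify over exactly the same sequences. By Corollary~\ref{cor:eqi}, $D_W$ and $D_W'$ are uniformly equivalent on $X$ (the equivalence on $X^G$ restricts to trajectories, hence to $X$), so this general observation applies.

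Putting these together: by Theorem~\ref{thm:poll} the function $x\mapsto h(x)$ is lower semicontinuous with respect to $D_W'$; since $D_W$ and $D_W'$ are uniformly equivalent by Corollary~\ref{cor:eqi}, the function is also lower semicontinuous with respect to $D_W$, which is the claim. (Alternatively one can simply invoke the $D_W$-clause already contained in Theorem~\ref{thm:poll}.)

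There is no real obstacle here; the theorem is a corollary in all but name, and indeed the excerpt introduces it with ``As a corollary of Theorem~\ref{thm:poll} and Corollary~\ref{cor:eqi} we get the following.'' The only point requiring a word of justification — and the place I would be slightly careful — is the transfer of lower semicontinuity along a uniform equivalence of pseudometrics, which is immediate from the sequential formulation of lower semicontinuity but should be stated rather than left entirely implicit. Accordingly the proof can be reduced to a single sentence.

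\begin{proof}
By Corollary~\ref{cor:eqi} the pseudometrics $D_W$ and $D_W'$ are uniformly equivalent on $X$, so a sequence converges to a point in $D_W$ if and only if it does so in $D_W'$; hence lower semicontinuity with respect to $D_W'$, established in Theorem~\ref{thm:poll}, is equivalent to lower semicontinuity with respect to $D_W$.
\end{proof}
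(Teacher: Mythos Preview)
Your proposal is correct and matches the paper's approach exactly: the paper gives no separate proof of Theorem~\ref{thm:pol}, introducing it only with ``As a corollary of Theorem~\ref{thm:poll} and Corollary~\ref{cor:eqi} we get the following,'' and your argument just makes explicit the (immediate) fact that lower semicontinuity passes between uniformly equivalent pseudometrics.
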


\begin{rem}\label{entropianiejestciagla}
In \cite[Example 3]{DI} it is shown that the function $(X,D_W)\ni x\mapsto h(x)\in \mathbb R_+\cup\{\infty\}$ need not to be continuous even in case of $\mathbb Z$ action. The example there can be generalized. Let $G$ be a countable amenable group and let $X=Y^G$ for some infinite compact metric space $Y$. Let $G$ act on $X$ by the shift. Pick $y\in Y$ and $\{y_n\}_{n\in\N}\subset Y\setminus\{y\}$ such that $y_n\to y$ as $n\to\infty$. Let $x_n\in\{y,y_n\}^G$ be a point such that $\overline{O(x_n)}=\{y,y_n\}^G$. Define $x=y^G$. Then $D_W(x,x_n)\to 0$ as $n\to\infty$, but $h(x)=0$ while for every $n\in\N$ one has $h(x_n)=\log 2$.
\end{rem}
The function $x\mapsto h(x)$ turns out to be continuous if $X=\alf^G$, where $\alf$ is a finite set. The proof is independent from the one of Theorem~\ref{thm:poll}. Note that in this case $h(x)\leq\log|\alf|$.
\begin{thm}\label{entropiaciagla}
The function $x\mapsto h(x)$ is $D^*$-continuous on $\alf^G$.
\end{thm}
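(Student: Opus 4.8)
The plan is to show that $x \mapsto h(x)$ is $D^*$-continuous on $\alf^G$ by combining the lower semicontinuity already established (Theorem~\ref{thm:pol}, valid for the Weyl pseudometric and hence for $D^*$ on $\alf^G$ by Theorem~\ref{tog}) with a matching \emph{upper semicontinuity} estimate. So the real content is: if $D^*(x, x')$ is small, then $h(x') \le h(x) + (\text{small})$. Since $h(x) \le \log|\alf|$ is bounded, upper semicontinuity plus lower semicontinuity gives continuity.

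For the upper estimate I would fix a tempered \Folner sequence $\mathcal F = \{F_n\}$ and work with separated sets. Suppose $D^*(x, x') = D^*(\{g : x_g \ne x'_g\}) < \delta$. Given an $(F_n, \eps, \delta')$-separated set $Z'$ for $\overline{Gx'}$, I want to produce from it a not-much-smaller separated set for $\overline{Gx}$, or rather bound $|Z'|$ in terms of separated sets for $\overline{Gx}$. The key point in the symbolic setting is that $\rho$ on $\alf^G$ is essentially the indicator metric: $\rho(gz, gz') > 1/2$ iff $(gz)_e \ne (gz')_e$, i.e. iff $z_g \ne z'_g$ (after unwinding the shift). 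So for points in $\overline{Gx}$ versus $\overline{Gx'}$, the coordinates where they can differ are controlled by the set $S = \{g : x_g \ne x'_g\}$ of upper Banach density $< \delta$. Concretely: any point $w' \in \overline{Gx'}$ is a limit of shifts $g_k x'$, and the corresponding shifts $g_k x$ converge (along a subsequence) to some $w \in \overline{Gx}$ with $w$ and $w'$ agreeing outside a set of density $\le \delta$ (in an appropriate uniform sense, using $D^*_{F_n}$). This lets me pass a $(F_n, \eps, \delta')$-separated subset of $\overline{Gx'}$ to a subset of $\overline{Gx}$ that is $(F_n, \eps', \delta'')$-separated for slightly worse parameters, at the cost of a combinatorial factor: two distinct points of the image that collide came from points differing only on a density-$\lesssim\delta$ set, and the number of such collisions is bounded by $|\alf|$ raised to the power $(\text{density})\cdot|F_n|$, i.e. by $2^{|F_n| E_S(\text{something})}$ via Lemma~\ref{suma}. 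Taking $\frac{1}{|F_n|}\log$, summing the contributions, letting $n \to \infty$ and then $\eps, \delta \searrow 0$ yields $h(x') \le h(x) + C\delta + \log|\alf|\cdot\delta + E_S(\delta)\log 2$ or similar, which tends to $0$ with $\delta$.

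The main obstacle is making the "transfer of separated sets between $\overline{Gx}$ and $\overline{Gx'}$" rigorous: $\overline{Gx}$ and $\overline{Gx'}$ are different subshifts, and I need to argue that every point of one is $D^*$-close (uniformly, with the right quantifiers on the window $F_n$ and the exceptional density) to a point of the other. This is exactly the kind of statement where one must be careful about the order of limits — the density bound $D^* < \delta$ is a statement about windows $F_n g$ for $g \in G$ and large $n$, uniformly in $g$ by Theorem~\ref{tog}, so for $n$ large every translate $F_n g$ meets $S$ in at most $\delta|F_n|$ points; that uniformity is what allows the passage to limit points of $\{g_k x\}$. I expect the cleanest route is to mimic the proof of Lemma~\ref{l3} and Theorem~\ref{thm:poll} in reverse, or to invoke that $h(x) = \tilde h_s(\overline{Gx}, G)$ and directly compare $(F_n, \eps, \delta)$-separated sets in the two subshifts using the indicator-metric description; once that comparison is in place, Lemma~\ref{suma} does the counting and the rest is bookkeeping. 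A secondary point to handle is that the comparison should be done with a single fixed tempered \Folner sequence throughout, which is legitimate since $h$, $\tilde h_s$ and $D^*$ are all independent of the choice of $\mathcal F$.
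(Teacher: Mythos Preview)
Your approach is workable in principle but takes a substantially more indirect route than the paper. The paper does \emph{not} use the lower semicontinuity of Theorem~\ref{thm:pol} at all (it explicitly says its proof is independent of it); instead it argues symmetrically and directly via block complexity. For a subshift over a finite alphabet one has
\[
h(x)=\lim_{n\to\infty}\frac{\log|\lang_{F_n}(x)|}{|F_n|},
\]
where $\lang_{F_n}(x)$ is the set of $F_n$-patterns occurring in $x$. If $D^*(x,x')<\delta$, then for all large $n$ every translate $F_ng$ carries at most $2\delta|F_n|$ coordinates at which $x$ and $x'$ disagree. Hence every $F_n$-pattern of $x'$ is obtained from some $F_n$-pattern of $x$ by altering at most $2\delta|F_n|$ symbols, giving
\[
|\lang_{F_n}(x')|\le |\alf|^{2\delta|F_n|}\cdot 2^{E_S(2\delta)|F_n|}\cdot|\lang_{F_n}(x)|
\]
via Lemma~\ref{suma}, and dividing by $|F_n|$ and letting $n\to\infty$ yields $h(x')\le h(x)+2\delta\log|\alf|+\log 2\cdot E_S(2\delta)$. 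Swapping $x$ and $x'$ finishes.

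By contrast, you propose to transfer $(F_n,\eps,\delta')$-separated subsets between the two \emph{orbit closures} $\overline{Gx'}$ and $\overline{Gx}$, which forces you to build for each $w'\in\overline{Gx'}$ a companion $w\in\overline{Gx}$ close in the right uniform sense, then control collisions of the companion map. That can be made rigorous (convergence in $\alf^G$ is pointwise on a discrete alphabet, so restrictions to any fixed $F_n$ eventually stabilize, and the uniform density bound passes to the limit), but it is extra machinery that the block-counting argument avoids entirely: the paper never leaves the single points $x,x'$ and never touches separated sets or their orbit closures. What your route buys is a proof that stays within the $(F_n,\eps,\delta)$-separated-set framework already set up for Theorem~\ref{thm:poll}; what the paper's route buys is a two-line counting estimate with no limiting-point gymnastics.
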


\begin{proof} Fix a \Folner\ sequence $\F=\{F_n\}_{n=1}^\infty$ and $\eps>0$. Let $\mathcal B_{F_n}(x)$ denote the set of configurations of shape $F_n$ that appear in $x$. Then
\[
\htop(\overline{Gx})=\lim_{n\to\infty}\frac{\log|\lang_{F_n}(x)|}{|F_n|}.
\]
Pick $0<\delta<1/4$ so that $t<2\delta$ implies $E_S(t)<\eps$. Note that
$D_W(x,x')<\delta$ implies that for all $g\in G$ we have
$
|\{f\in F_n : x_{fg}\neq x'_{fg}\}|<2\delta |F_n|$
for all $n$ large enough.
By Lemma~\ref{suma} we have for all $n$ large enough that
\[
|\lang_{F_n}(x')|\le|\alf|^{2\delta\cdot|F_n|}\cdot 2^{E_S(2\delta)\cdot |F_n|} \cdot |\lang_{F_n}(x)|.
\]
Thus
\[
\htop(\overline{Gx'})=\limsup_{n\to\infty}\frac{\log|\lang_{F_n}(x')|}{|F_n|}\le 2\delta\log{|\alf|}+\log 2\cdot E_S(2\delta)+\htop(\overline{Gx}).
\]
Interchanging the roles of $x$ and $x'$ finishes the proof.
\end{proof}

\section{The Number of Minimal Components of $\overline{Gx}$}\label{sec:m}

Denote by $m(x)\in\N\cup\{\infty\}$ the number of minimal components of $\overline{Gx}$. Our aim is to prove that $m$ is lower semicontinuous with respect to $D_W$, which generalizes \cite[Theorem 1]{DI}.
For $x\in X$ and $A\subset X$ define the \emph{set of return times} $N(x,A)=\{g\in G\,:\,gx\in A\}.$
Recall that a set $Z\subset X$ is \emph{invariant} if $GZ=\{gz\,:\,g\in G,z\in Z\}\subset Z$.

\begin{lem}\label{lem:minimalthick}
If $Z\subset \overline{Gx}$ is an invariant (not necessarily closed) set, then for any $\eps>0$ the set $N(x, Z^{\eps})$ is thick.
\end{lem}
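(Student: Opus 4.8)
The goal is to show that for an invariant set $Z\subseteq\overline{Gx}$ and any $\eps>0$, the return time set $N(x,Z^\eps)=\{g\in G:gx\in Z^\eps\}$ is thick, i.e. for every $F\in\Fin(G)$ there is $\gamma\in N(x,Z^\eps)$ with $F^{-1}\gamma\subseteq N(x,Z^\eps)$. First I would fix $F\in\Fin(G)$ and pick some point $z\in Z$; since $Z$ is invariant, every translate $fz$ for $f\in F^{-1}$ (equivalently, for $f$ ranging over a chosen finite set) also lies in $Z$. The natural move is to use that $z\in\overline{Gx}$ to approximate $z$, and all finitely many translates $fz$ simultaneously, by a single orbit point $gx$.

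The key step is a standard joint-continuity / density argument. Because the action $G\times X\to X$ is continuous and $F$ is finite, for the given $\eps$ there is $\delta>0$ such that $\rho(a,b)<\delta$ implies $\rho(fa,fb)<\eps$ for every $f\in F^{-1}$. Since $z\in\overline{Gx}$, choose $g\in G$ with $\rho(gx,z)<\delta$. Then for every $f\in F^{-1}$ we get $\rho(fgx,fz)<\eps$; and $fz\in GZ\subseteq Z$ by invariance, so $fgx\in Z^\eps$, i.e. $fg\in N(x,Z^\eps)$. Setting $\gamma=g$, this says exactly $F^{-1}\gamma\subseteq N(x,Z^\eps)$, and in particular $\gamma=e\cdot\gamma\in F^{-1}\gamma\subseteq N(x,Z^\eps)$ provided $e\in F^{-1}$; if $e\notin F^{-1}$ one simply replaces $F$ by $F\cup\{e\}$ at the outset (enlarging a finite set only makes the thickness condition harder, so this is harmless). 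Thus $N(x,Z^\eps)$ meets the required pattern for every finite $F$, which is the definition of thick.

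The only mild subtlety — and the step I would be most careful about — is making sure the uniform $\delta$ works for all of $F^{-1}$ at once; this is exactly where finiteness of $F$ is used, via continuity of each of the finitely many maps $a\mapsto fa$. There is nothing here that needs amenability or the Weyl pseudometric; it is a soft topological-dynamics fact. I would write it in two or three sentences along the lines above, after reducing to the case $e\in F$.

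\begin{proof}
Fix $F\in\Fin(G)$; enlarging $F$ if necessary we may assume $e\in F$. Since the action is continuous and $F^{-1}$ is finite, there is $\delta>0$ such that $\rho(a,b)<\delta$ implies $\rho(fa,fb)<\eps$ for all $f\in F^{-1}$. Pick any $z\in Z$. As $z\in\overline{Gx}$, there is $g\in G$ with $\rho(gx,z)<\delta$, hence $\rho(fgx,fz)<\eps$ for every $f\in F^{-1}$. By invariance of $Z$ we have $fz\in Z$, so $fgx\in Z^\eps$, i.e. $fg\in N(x,Z^\eps)$ for all $f\in F^{-1}$; that is, $F^{-1}g\subseteq N(x,Z^\eps)$. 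Since $e\in F$ gives $g\in F^{-1}g\subseteq N(x,Z^\eps)$, the set $N(x,Z^\eps)$ satisfies the defining property of a thick set.
\end{proof}
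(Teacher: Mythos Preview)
Your proof is correct and follows essentially the same route as the paper's: pick $z\in Z$, use continuity of the finitely many maps $a\mapsto fa$ ($f\in F^{-1}$) to get a uniform $\delta$, approximate $z$ by some $gx$, and conclude $F^{-1}g\subseteq N(x,Z^{\eps})$ via invariance of $Z$. The only cosmetic difference is that the paper ensures $\gamma\in N(x,Z^{\eps})$ by taking $\delta<\eps$, whereas you achieve this by first enlarging $F$ to contain $e$; both are equally valid.
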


\begin{proof}
We have to show that for every finite $F\subset G$ one can find $\gamma\in N(x,Z^{\eps})$ such that for every $f\in F^{-1}$ one has $f\gamma x\in Z^{\eps}$. Choose $y\in Z$. Let $\delta\in(0,\eps/2)$ be such that for all $a,b\in X$ with $\rho(a,b)<\delta$ one has $\rho(fa, fb)<\eps$ for every $f\in F^{-1}$. There exists $\gamma\in G$ such that $\rho(\gamma x, y)<\delta$. This means that $\gamma\in N(x,Z^{\eps})$ and for every $f\in F^{-1}$ one has $\rho(f\gamma x,y)<\eps$. This implies that $f\gamma x\in Z^{\eps}$.
\end{proof}

The next Lemma combines \cite[Proposition 2.18]{BKKPL} and \cite[Lemma 1]{DI}.

\begin{lem}\label{lem:synd}
Let $(X,G)$ be a transitive dynamical system and $x\in X$ be such that $X=\overline{Gx}$. Then the following conditions are equivalent:
\begin{enumerate}[leftmargin=*]
\item One has $m(x)\leq m$.
\item There exist $p\leq m$ closed $G$-invariant sets $N_1,\ldots, N_p\subset X$ such that for every $z\in X$ there exists $1\leq i\leq p$ satisfying $N_i\subset \overline{Gz}$.
\item There exist $p\leq m$ closed $G$-invariant sets $P_1,\ldots, P_p\subset X$ such that for every open set $U\subset X$ intersecting $P_i$ for every $1\leq i\leq p$ the set $N(x, U)$ is syndetic.
\item There exist $m$ points $z_1,\ldots, z_m\in X$ such that for every $\eps>0$ the set $N(x, Z^{\eps})$ is syndetic, where $Z=\{z_1,\ldots, z_m\}$.\label{2}
\end{enumerate}
\end{lem}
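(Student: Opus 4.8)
The plan is to prove the four conditions of Lemma~\ref{lem:synd} equivalent by establishing the cycle $(1)\Rightarrow(2)\Rightarrow(3)\Rightarrow(4)\Rightarrow(1)$, drawing on the two references cited. Throughout we use that $X=\overline{Gx}$ is transitive, and we freely use Lemma~\ref{lem:minimalthick} together with the syndetic--thick duality recorded in Section~2.

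\textbf{From $(1)$ to $(2)$.} Assume $m(x)\le m$ and let $M_1,\dots,M_p$ with $p\le m$ be the minimal subsets of $X$. By Zorn's lemma every $\overline{Gz}$ contains at least one minimal set, so some $M_i\subset\overline{Gz}$; take $N_i=M_i$. (This step is essentially the easy direction and follows \cite[Lemma 1]{DI}.)

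\textbf{From $(2)$ to $(3)$.} Given the sets $N_1,\dots,N_p$, put $P_i=N_i$. Fix an open $U$ meeting every $P_i$ and suppose toward a contradiction that $N(x,U)$ is not syndetic; by duality there is a thick set $T$ disjoint from $N(x,U)$. Using thickness, build a point $z$ in the orbit closure of $x$ whose forward orbit stays out of $U$: choose an increasing sequence of finite sets $F_k\uparrow G$ and $\gamma_k\in T$ with $F_k^{-1}\gamma_k\subset T$, so $g\gamma_k x\notin U$ for all $g\in F_k^{-1}$; a subnet of $\{\gamma_k x\}$ converges to some $z\in X$ with $Gz\cap U=\emptyset$, hence $\overline{Gz}\cap U=\emptyset$ (as $U$ is open). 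But then no $N_i\subset\overline{Gz}$, contradicting $(2)$. This is the argument from \cite[Proposition 2.18]{BKKPL}; the mild subtlety is the simultaneous control of ``$\overline{Gz}$ avoids $U$'' while $z$ is a genuine limit of orbit points, which is handled by the thickness-driven diagonal construction.

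\textbf{From $(3)$ to $(4)$.} For each $i$ pick $z_i\in P_i$ and set $Z=\{z_1,\dots,z_m\}$ (pad with repetitions if $p<m$). Given $\eps>0$, the set $U=Z^{\eps}=\bigcup_i B(z_i,\eps)$ is open and meets every $P_i$, so by $(3)$ the set $N(x,U)=N(x,Z^{\eps})$ is syndetic.

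\textbf{From $(4)$ to $(1)$.} This is the main obstacle and the heart of the lemma, following \cite[Lemma 1]{DI}. Suppose $N(x,Z^{\eps})$ is syndetic for all $\eps>0$, with $|Z|=m$; we must show $X$ has at most $m$ minimal sets. Let $M\subset X$ be any minimal set. The set $Y=\bigcup_{g\in G} gZ$ is $G$-invariant, so by Lemma~\ref{lem:minimalthick} the set $N(x,Y^{\eps})$ is thick for every $\eps$; combined with syndeticity of $N(x,Z^{\eps})$ one shows that arbitrarily long ``$F$-blocks'' of return times to $Z^{\eps}$ occur, forcing some $z_i$ to have $\overline{Gz_i}\supset M$, hence (by minimality of $M$) $M=\overline{Gz_i}$ for some fixed $i$ depending only on $M$. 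Concretely: if $M$ were disjoint from $\overline{Gz_i}$ for every $i$, pick $\eps$ smaller than $\min_i \dist(M,\overline{Gz_i})/2$; then $N(x,Z^{\eps})$ and $N(x,M^{\eps})$ are disjoint, the former syndetic, and one derives a contradiction with the fact that the latter is nonempty and that the complement of a syndetic set cannot be thick --- more carefully, one uses that the return times to a neighbourhood of the minimal set $M$ form a syndetic set, which must then intersect $N(x,Z^{\eps})$, giving a point of $X$ within $\eps$ of both $M$ and some $z_i$, contradicting the choice of $\eps$. Since every minimal $M$ equals $\overline{Gz_i}$ for one of the finitely many $i\in\{1,\dots,m\}$, there are at most $m$ minimal sets, i.e.\ $m(x)\le m$. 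The delicate point here is organizing the syndetic/thick bookkeeping so that the neighbourhood scales $\eps$ are chosen consistently across the finitely many candidate minimal sets $\overline{Gz_i}$; once $\eps$ is fixed below the pairwise distances this is routine.
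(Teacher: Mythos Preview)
Your implications $(1)\Rightarrow(2)\Rightarrow(3)\Rightarrow(4)$ are fine and match the paper's argument (the paper's detour through $V=gU$ in $(2)\Rightarrow(3)$ is not needed, and your direct use of the syndetic--thick duality is cleaner).

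The step $(4)\Rightarrow(1)$, however, has a genuine gap. Your ``concrete'' argument correctly shows that every minimal set $M$ intersects some $\overline{Gz_i}$: if not, choose $\eps<\min_i\dist(M,\overline{Gz_i})/2$, so $Z^\eps\cap M^\eps=\emptyset$; then $N(x,Z^\eps)$ is syndetic by hypothesis while $N(x,M^\eps)$ is thick by Lemma~\ref{lem:minimalthick}, and disjointness contradicts duality. (Your ``more carefully'' clause, asserting that $N(x,M^\eps)$ is \emph{syndetic}, is false in general---it is thick, which is what you actually need.) From $M\cap\overline{Gz_i}\neq\emptyset$ and minimality of $M$ you get $M\subset\overline{Gz_i}$. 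But you then write ``hence $M=\overline{Gz_i}$'' and conclude that there are at most $m$ minimal sets. This inference is invalid: the points $z_i$ in condition $(4)$ are not assumed to be minimal, so $\overline{Gz_i}$ may strictly contain $M$, and a single $\overline{Gz_i}$ may contain many minimal subsets. Showing that each minimal set lies inside one of $m$ orbit closures does not bound the number of minimal sets.

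The paper avoids this by arguing the contrapositive with a pigeonhole step. Assume $m(x)>m$, pick $m+1$ pairwise disjoint minimal sets $Z_1,\dots,Z_{m+1}$ and $\eps>0$ with $\dist(Z_i,Z_j)>2\eps$ for $i\neq j$. For \emph{any} $m$-point set $Z$, each ball $B(z_j,\eps)$ can be $\eps$-close to at most one $Z_i$, so by pigeonhole there is some $i_0$ with $Z^\eps\cap Z_{i_0}^\eps=\emptyset$. Then $N(x,Z^\eps)$ is disjoint from the thick set $N(x,Z_{i_0}^\eps)$ (Lemma~\ref{lem:minimalthick}), hence not syndetic, and $(4)$ fails. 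The key difference is that the pigeonhole is applied to the $m+1$ minimal sets themselves, not to the orbit closures $\overline{Gz_i}$.
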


\begin{proof}
To prove that $(1)\Rightarrow (2)$ let $N_1,\ldots, N_p$ be all minimal subsets of $X$. Note that for $z\in X$ the set $\overline{Gz}$ is nonempty, closed and invariant and hence contains $N_i$ for some $1\leq i\leq p$.
Thus $(2)$ holds.

To show that $(2)\Rightarrow (3)$ let $P_i=N_i$ for $1\leq i\leq p$, where $N_i$'s are as in $(2)$. Choose an open set $U$ such that $U\cap P_i\neq\emptyset$ for every $1\leq i\leq p$. Then there exists $g\in G$ such that $x\in V:=gU$. Since $g^{-1}N(x, U)\supset N(x, V)$ it is enough to show that $N(x, V)$ is syndetic. Suppose that it is not true. Let $\{F_n\}_{n\in\N}$ be an increasing sequence of finite sets whose union is equal to $G$. Then for every $n\in\N$ one can find $\gamma_n\in G$ such that for every $f\in F_n$ one has $f\gamma_n x\notin V$. Let $y\in X$ be a limit point of $\{\gamma_n x\}_{n\in\N}$. Then $\overline{Gy}\cap V=\emptyset$, which contradicts $(2)$.

Obviously $(3)\Rightarrow (4)$ and it remains to show that $(4)\Rightarrow(1)$.
Suppose that $m(x)>m$. Let $Z_1, \ldots Z_{m+1}\subset \overline{Gx}$ be disjoint minimal sets.
Choose $\eps>0$ such that for any $i\neq j$ one has $\text{dist}(Z_i,Z_j)>2\eps$. Notice that for any set $Z$ consisting of $m$ points $z_1,\ldots, z_m$ there exists $1\leq i_0\leq m$ such that  $Z^{\eps}$ is disjoint from $Z_{i_0}^{\eps}$. Therefore $N(x, Z^{\eps})\cap N(x, Z_{i_0}^{\eps})=\emptyset$. By Lemma~\ref{lem:minimalthick} the set $N(x,Z_{i_0}^{\eps})$ is thick. Hence $N(x,Z^{\eps})$ is not syndetic.
\end{proof}

\begin{thm}
The function $m(x)\colon (X,D_W)\to \mathbb N\cup\{\infty\}$ is lower semicontinuous.
\end{thm}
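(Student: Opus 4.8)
The plan is to show lower semicontinuity by contraposition, using the characterization of $m(x) \le m$ from Lemma~\ref{lem:synd}(4): $m(x) \le m$ iff there exist $m$ points $z_1, \dots, z_m$ such that for every $\eps > 0$ the return-time set $N(x, Z^\eps)$ is syndetic, where $Z = \{z_1, \dots, z_m\}$. Suppose $x_n \to x$ in $D_W$ (equivalently, by Corollary~\ref{cor:eqi}, in $D_W'$) and $\liminf_n m(x_n) = m < \infty$ (if the $\liminf$ is infinite there is nothing to prove). Passing to a subsequence I may assume $m(x_n) \le m$ for all $n$; I must deduce $m(x) \le m$, i.e., produce $m$ points witnessing condition (4) for $x$.

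First I would record the "syndetic up to a small Banach-density error is still enough" principle: since the complement of a non-syndetic set is thick, and by Lemma~\ref{lem:minimalthick} the return-time set of an $\eps$-neighborhood of any invariant set is thick, syndeticity of $N(x,Z^\eps)$ is exactly what forbids a disjoint thick return set to some extra minimal component. The key quantitative step is: if $D_W'(x,y) < \delta$ and $N(y, Z^\eps)$ is syndetic (say $F N(y,Z^\eps) = G$ with $|F| = L$), then $N(x, Z^{2\eps})$ misses $N(y, Z^\eps)$ only on the set $\{g : \rho(gx, gy) \ge \eps\}$ shifted appropriately, which has upper Banach density $< \delta$; choosing $\delta$ small relative to $1/L$ one still gets that $N(x, Z^{2\eps})$ is syndetic. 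Concretely, $G \setminus F\,N(x,Z^{2\eps}) \subseteq$ a set of upper Banach density $\le L\delta < 1$, and a set whose complement has upper Banach density bounded away from $1$... — actually the cleaner route is: $N(x, Z^{2\eps}) \supseteq N(y,Z^\eps) \setminus B_\delta$ where $B_\delta = \{g : \rho(gx,gy) > \eps\}$ has $D^*(B_\delta) < \delta$; one checks that a syndetic set minus a set of small upper Banach density is still syndetic, provided $\delta$ is small compared to the inverse of the syndeticity constant of $N(y,Z^\eps)$ — and here is where I need uniformity.

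The main obstacle is precisely this uniformity: the syndeticity constant of $N(x_n, Z_n^\eps)$ (the size of the finite set $F_n$ with $F_n N(x_n, Z_n^\eps) = G$) may blow up with $n$, so a fixed $\delta$ from $D_W'(x,x_n) < \delta$ need not beat $1/|F_n|$. I would handle this by following the $\mathbb{Z}$-argument of \cite[Theorem 1]{DI}: instead of fixing $\eps$ first, use a diagonal/compactness argument. For each $n$ pick the $m$ witnessing points $z_1^{(n)}, \dots, z_m^{(n)} \in \overline{Gx_n}$ for $x_n$; since $D_W'(x,x_n) \to 0$, the orbit closures $\overline{Gx_n}$ converge (in an appropriate Hausdorff-type sense on subsets of $X$ — this follows from quasi-uniform convergence), so passing to a further subsequence $z_i^{(n)} \to z_i \in \overline{Gx}$ for each $i = 1, \dots, m$. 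Set $Z = \{z_1, \dots, z_m\}$. Now fix $\eps > 0$. For $n$ large, $Z_n := \{z_1^{(n)},\dots,z_m^{(n)}\}$ satisfies $Z_n \subseteq Z^{\eps/2}$, hence $N(x_n, Z_n^{\eps/2}) \subseteq N(x_n, Z^\eps)$, and the latter is syndetic. It remains to transfer syndeticity of $N(x_n, Z^\eps)$ to syndeticity of $N(x, Z^{2\eps})$ for at least one suitably chosen $n$; using that for any prescribed finite $F$ one may take $n$ with $D_W'(x,x_n)$ smaller than $1/(2|F|)$ and then argue as above that if $N(x,Z^{2\eps})$ were not syndetic its complement would be thick and would have to meet $N(x_n, Z^\eps)$ on a set of positive upper Banach density while simultaneously being forced into $B_{x,x_n} = \{g : \rho(gx,gx_n) > \eps\}$ of Banach density $< 1/(2|F|)$ — a contradiction once $F$ is the syndeticity set of $N(x_n, Z^\eps)$. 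Since $\eps > 0$ was arbitrary, condition~(4) of Lemma~\ref{lem:synd} holds for $x$ with the fixed points $z_1,\dots,z_m$, so $m(x) \le m \le \liminf_n m(x_n)$, completing the proof.
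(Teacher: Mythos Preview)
Your argument has a genuine circularity gap at the crucial ``transfer of syndeticity'' step. You want to deduce that $N(x,Z^{2\eps})$ is syndetic from the syndeticity of $N(x_n,Z^{\eps})$ by bounding $D^*(B_{x,x_n})$ below the lower Banach density of $N(x_n,Z^{\eps})$, i.e.\ below $1/|F|$ where $F$ is the syndeticity witness for $N(x_n,Z^{\eps})$. To achieve this you propose to choose $n$ so that $D_W'(x,x_n)<1/(2|F|)$, but $F$ itself depends on $n$ and on $\eps$, and nothing in your hypotheses prevents $|F|$ from growing faster than $D_W'(x,x_n)^{-1}$ along every subsequence. The sentence ``for any prescribed finite $F$ one may take $n$ with $D_W'(x,x_n)<1/(2|F|)\ldots$ a contradiction once $F$ is the syndeticity set of $N(x_n,Z^\eps)$'' is exactly this loop: you are choosing $n$ after $F$ and $F$ after $n$. (A minor side issue: your claim that $\overline{Gx_n}\to\overline{Gx}$ in the Hausdorff metric is false in general---take $x=0^{\Z}$ and $x_n$ equal to $0^{\Z}$ with a single $1$ at position~$n$---but this is harmless, since compactness of $X$ alone lets you extract limit points $z_i$, and Lemma~\ref{lem:synd}(4) only requires $z_i\in X$.)

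The paper avoids this difficulty by reversing the direction of the argument. Instead of assuming $m(x_n)\le m$ and trying to push syndeticity from $x_n$ to $x$, it fixes $x$ with $m(x)>m$, chooses $m+1$ pairwise $2\eps$-separated minimal sets $Z_1,\ldots,Z_{m+1}\subset\overline{Gx}$, and shows directly that any $y$ with $D_W'(x,y)<\eps/6$ satisfies $m(y)>m$. The point is that now the \emph{thick} sets come from $x$ (namely $N(x,Z_s^\delta)$, thick by Lemma~\ref{lem:minimalthick}) and the \emph{syndetic} set comes from $y$ (namely $N(y,Z(y)^\delta)$ for the witnesses $Z(y)=\{z_1,\ldots,z_{m(y)}\}$ of Lemma~\ref{lem:synd}(4)). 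Since thick and syndetic sets always intersect---no quantitative control on the syndeticity constant is needed---one obtains for each $s$ an element $g(s)$ with $g(s)x\in Z_s^\delta$ and $g(s)y$ close to some $z_{r(s)}$. A single F{\o}lner set $F_N$, chosen only from the condition $D_W'(x,y)<\eps/6$, then yields via a majority argument that $s\mapsto r(s)$ is injective, forcing $m(y)\ge m+1$. This sidesteps your uniformity obstacle entirely.
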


\begin{proof}
Fix $x\in X$. Let $m\in\N$ fulfill $m(x)>m$. Choose minimal sets $Z_1,\ldots, Z_{m+1}\subset \overline{Gx}$ and $\eps>0$ with $Z_i\cap Z_j=\emptyset$ and $\text{dist}(Z_i,Z_j)>2\eps$ for every $i\neq j$. Fix $y\in X$ with $D_W'(x,y)<\eps/6$, we will show that $m(y)>m$. Let $\{F_n\}_{n\in\N}$ be a F{\o}lner sequence.
There exists $N\in\N$ such that for every $g\in G$ one has \[\left|\left\{f\in F_Ng\,:\,\rho(fx, fy)>
\eps/6\right\}\right|<\eps|F_N|/6<|F_N|/2.\]
Let $\delta\in(0,\eps/3)$ be such that if $a,b\in X$, $g\in F_N$ and $\rho(a,b)<2\delta$, then $\rho(ga, gb)<\eps/3$. Let $Z(y)=\{z_1,\ldots, z_{m(y)}\}$ be constructed as in Lemma~\ref{lem:synd} for $y$. Fix $1\leq s\leq m+1$. Then $N(x, Z_s^{\delta})\cap N(y, Z(y)^{\delta})\neq\emptyset$. Let $g(s)\in G$ and $1\leq r(s)\leq m(y)$ be such that $g(s)x\in Z_s^{\delta}$ and $\rho(g(s)y,z_{r(s)})<\delta.$ Notice that for more than the half of elements $f\in F_N$ one has \begin{multline*}\text{dist}(fz_{r(s)}, Z_s)\leq \rho(fz_{r(s)},fg(s)y)+\rho(fg(s)x,fg(s)y)+\dist (fg(s)x, Z_s)\leq\eps.
\end{multline*}
Consequently, if $s\neq t$, then also $r(s)\neq r(t)$. Therefore $m(y)\geq m+1>m$. 
\end{proof}

\begin{rem}
Note that the example presented in Remark~\ref{entropianiejestciagla} shows that there is a $G$ action such that $m$ is not $D_W$-continuous. Such a system exists for every countable amenable group $G$. (cf. \cite[ Example 2]{DI})
\end{rem}

\section{Toeplitz Configurations}\label{section7}
\subsection{Basic Properties and Regularity}
Throughout this section we assume that $G$ is a countable residually finite group, not necessarily amenable.
Let $\alf$ be a finite discrete metric space.
We call $\und x\in \alf^G$ a \emph{Toeplitz configuration} if for every $g\in G$ there exists $H<_f G$ such that $\und x(Hg)=\und x(g)$.
For $H\subset G$ and $a\in \alf$ define:
\[\text{Per}_{H}(\und x)=\big\{g\in G\,:\, \und x(g)=\und x(\gamma g)\text{ for every }\gamma\in H\big\},\]
 \[\text{Per}_{H}(\und x,a)=\big\{g\in G\,:\, \und x(\gamma g)=a\text{ for every }\gamma\in H\big\}.\]
Notice that $\und x\in  \alf^G$ is a Toeplitz configuration if and only if \[\bigcup\{\text{Per}_{H}(\und x)\colon H <_fG\} =G.\]


Let $\{H_i\}_{i=0}^{\infty}$ be a sequence of subgroups of $G$ with finite index such that $H_1\supset H_2\supset\ldots$ (we do not assume here that $\bigcap H_i=\{e\}$). The inverse limit of the system  $\lim\limits_{\longleftarrow}(G/H_i,\pi_i)$, where  $\pi_i\colon G/H_{i+1}\to G/H_i$ are natural projections, is called the $G$-odometer and is denoted by $\hat G$. The odometer is \emph{exact} if all $H_i$ are normal. Every $G$-odometer is a compact metric space with a natural $G$ action.

\begin{rem}
Notice that $\hat G$ is not necessarily  equal to a profinite completion of $G$ as here we consider only subgroups from the fixed sequence $\{H_n\}_{n\in\N}$.
\end{rem}
A nested sequence of finite index subgroups $\{H_n\}_{n\in\N}$ is a \emph{skeleton} of $\und x\in X^G$ if $\bigcup\text{Per}_{H_n}(\und x)=G$ and for every $n\in\N$ one has $\text{Per}_{H_n}(\und x)\neq\emptyset$ and if $g\in G$ is such that for every $a\in\alf$ one has  $\text{Per}_{H_n}(\und x,a)=\text{Per}_{H_n}(g\und x,a)$, then $g\in H_n$.

\begin{thm}[\cite{CortezPetite}, Corollary 6]
Every Toeplitz configuration has a skeleton.
\end{thm}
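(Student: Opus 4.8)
The plan is to construct a skeleton for a given Toeplitz configuration $\und x\in\alf^G$ directly from the periodicity structure, extracting at each stage a sufficiently small finite-index subgroup. First I would fix an enumeration $g_1,g_2,\ldots$ of $G$ and, since $\und x$ is Toeplitz, for each $n$ choose a finite-index subgroup $K_n<_f G$ with $g_n\in\Per_{K_n}(\und x)$. Passing to the intersection $L_n=K_1\cap\cdots\cap K_n$ (still finite index, as a finite intersection of finite-index subgroups) we obtain a nested sequence with $\bigcup_n\Per_{L_n}(\und x)\supseteq\{g_1,\ldots,g_n,\ldots\}=G$, so the first defining property of a skeleton, $\bigcup_n\Per_{L_n}(\und x)=G$, holds; moreover each $\Per_{L_n}(\und x)\ne\emptyset$ since it contains $g_1$ once $n\ge 1$. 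The remaining requirement is the ``minimality'' condition: if $g\in G$ satisfies $\Per_{H_n}(\und x,a)=\Per_{H_n}(g\und x,a)$ for all $a\in\alf$, then $g\in H_n$.

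The key point is therefore to shrink the $L_n$ further so that this rigidity condition is forced. I would observe that $\Per_{H}(g\und x,a)=\{h\in G: g\und x(\gamma h)=a\ \forall\gamma\in H\}=\{h: \und x(\gamma h g)=a\ \forall\gamma\in H\}=\Per_H(\und x,a)g^{-1}$, so the condition $\Per_{H}(\und x,a)=\Per_H(g\und x,a)$ for all $a$ says exactly that right translation by $g^{-1}$ preserves each set $\Per_H(\und x,a)$; equivalently $g$ lies in the ``period group'' of the coloring $G\ni h\mapsto(\und x(\gamma h))_{\gamma\in H}$ of $G$ by $\alf^H$-patterns. Call this group $P_H$; it is a subgroup of $G$ containing $H$ (since $h\in\Per_H(\und x,a)$ and $\gamma_0\in H$ give $\und x(\gamma\gamma_0 h)=a$ for all $\gamma\in H$, i.e. $\gamma_0 h\in\Per_H(\und x,a)$, wait — one must be slightly careful about left vs.\ right cosets here). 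The skeleton condition is precisely $P_{H_n}=H_n$. So the task reduces to: given the nested sequence $L_n$, replace it by a nested sequence $H_n$ with $H_n\subseteq L_n$, still finite index, still with union of $\Per_{H_n}$ equal to $G$, and with $P_{H_n}=H_n$.

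To achieve $P_{H_n}=H_n$ I would argue that $P_H$ is itself a finite-index subgroup sandwiched between $H$ and $G$, and that the assignment $H\mapsto P_H$ is monotone and idempotent-like: $P_{P_H}\supseteq P_H$ but one can iterate. More efficiently, I would take $H_n$ to be a finite-index subgroup contained in $L_n$ that is ``saturated'', e.g. by passing to the intersection of all subgroups of the form $\Per_H(\und x,a)\Per_H(\und x,a')^{-1}$ or simply by invoking that $\Per_H(\und x)$-type sets are unions of cosets of $H$ and refining. Concretely: enlarge $L_n$ to $P_{L_n}$ if needed, or rather note that we may always \emph{shrink} to kill extra periods — if $g\in P_{H}\setminus H$, then $H'=\langle H,\text{relations}\rangle$ — here is where I would lean on the cited Corollary~6 of \cite{CortezPetite} structurally, but since we are asked to reprove it, the honest route is: choose $H_n$ to be a common refinement making the finite pattern-coloring by $\alf^{L_n}$-windows have period group exactly $H_n$, which is possible because that coloring has \emph{some} finite-index period group $P$, and replacing $L_n$ by $P$ and iterating stabilizes after finitely many steps as indices are bounded below by $1$.

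The main obstacle I anticipate is exactly this stabilization/minimality step: ensuring one can simultaneously keep the chain nested, keep $\bigcup\Per_{H_n}(\und x)=G$ (shrinking $H_n$ only helps here, since $\Per_H(\und x)\subseteq\Per_{H'}(\und x)$ when $H'\subseteq H$ — monotonicity in the right direction), and force $P_{H_n}=H_n$. The delicate bookkeeping is the left/right coset conventions in the definitions of $\Per_H(\und x)$ and $\Per_H(\und x,a)$, and verifying that the period group of the window-coloring has finite index (which follows because the coloring factors through $G/H$ composed with finitely much data, so its stabilizer under right translation contains a finite-index subgroup). Once that is in hand, taking $H_n$ to be the stabilizer itself — and then passing to $H_1\cap H_2\cap\cdots$ diagonally to restore nestedness while noting indices only decrease — yields a skeleton. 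I would present the lvl of detail: define $P_H$, show $H\le P_H<_f G$, show $P_{P_H}=P_H$ after at most $[G:H]$ iterations, set $H_n=P_{L_n}$, re-nest, and check the three axioms.
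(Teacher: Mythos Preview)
The paper does not supply its own proof of this statement: it is quoted verbatim as \cite[Corollary~6]{CortezPetite} and used as a black box, with no argument given. So there is no ``paper's proof'' to compare your proposal against.

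As for your sketch on its own merits: the overall architecture is sound --- build a nested finite-index chain $L_n$ with $\bigcup_n\Per_{L_n}(\und x)=G$, then adjust each $L_n$ so that the rigidity condition $P_{H_n}=H_n$ holds --- and your identification $\Per_H(g\und x,a)=\Per_H(\und x,a)g^{-1}$ is correct. The genuine soft spot, which you yourself flag, is the ``saturation'' step. You oscillate between \emph{enlarging} $L_n$ to its period group $P_{L_n}$ and \emph{shrinking} it, and these have opposite effects on $\Per_{H}(\und x)$: since $H\subseteq H'$ implies $\Per_{H'}(\und x)\subseteq\Per_H(\und x)$, enlarging to $P_{L_n}$ risks losing coverage of $G$, while shrinking cannot by itself force $P_{H_n}=H_n$. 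The clean resolution (and this is essentially what Cortez--Petite do) is to \emph{enlarge}: replace $L_n$ by the group $P_{L_n}=\{g:\Per_{L_n}(\und x,a)g=\Per_{L_n}(\und x,a)\text{ for all }a\}$, check that $P_{L_n}$ still has finite index (it contains $L_n$), that $\Per_{P_{L_n}}(\und x,a)=\Per_{L_n}(\und x,a)$ (so no coverage is lost and the construction is idempotent in one step, not after iteration), and then diagonalize to restore nestedness. Your proposal would become a correct proof once you commit to enlarging and verify the equality $\Per_{P_H}(\und x,a)=\Per_H(\und x,a)$, which is the one nontrivial lemma hiding in the argument.
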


Our goal now is to define a \emph{regular} Toeplitz configuration. To justify that the regularity does not depend on the choice of a skeleton we need the following results from~\cite{CortezPetite}. Note that the standing assumption in \cite{CortezPetite} is that $G$ is finitely generated, but one can check that the results we need hold without the assumption from~\cite{CortezPetite}.

\begin{thm}[\cite{CortezPetite}, Proposition 7]\label{factor1}
If $\{H_n\}_{n=0}^{\infty}$ is a skeleton of a Toeplitz configuration $\und x\in\alf^G$, then $\hat G=\lim\limits_{\longleftarrow}(G/H_i,\pi_i)$ is the maximal equicontinuous factor of the minimal system $(\overline{G\und x}, G)$ and there is a factor map $\pi\colon \overline{G\und x}\to\hat G$ such that $\pi^{-1}(\{eH_i\}_{i=0}^{\infty})=\{\und x\}$.
\end{thm}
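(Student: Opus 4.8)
The plan is to construct the factor map $\pi$ first on the dense orbit $G\und x$, extend it by uniform continuity, and then read off the remaining assertions by soft dynamical arguments. Two preliminary observations: first, $(\overline{G\und x},G)$ is minimal, because given a finite $W\subseteq G$ the increasing sets $\operatorname{Per}_{H_i}(\und x)$ cover $G$, so $W\subseteq\operatorname{Per}_{H_n}(\und x)$ for some $n$, and then the finite-index (hence syndetic) subgroup $\bigcap_{w\in W}w^{-1}H_nw$ is contained in the return-time set $\{g\in G:(g\und x)|_W=\und x|_W\}$, so $\und x$ is almost periodic; second, $\hat G$ is an inverse limit of finite $G$-systems, hence a minimal equicontinuous system in which $\hat e:=\{eH_i\}_{i=0}^{\infty}$ has dense orbit.

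Next I would put $\pi(g\und x):=g\hat e=\{gH_i\}_{i=0}^{\infty}$ on the orbit. This is well defined and $G$-equivariant: if $g\und x=g'\und x$ then $h=g^{-1}g'$ fixes $\und x$, so $\operatorname{Per}_{H_n}(h\und x,a)=\operatorname{Per}_{H_n}(\und x,a)$ for every $n$ and every $a\in\alf$, and the third clause in the definition of a skeleton forces $h\in H_n$ for all $n$, i.e. $gH_n=g'H_n$ for all $n$.

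The hard part will be showing $\pi$ is uniformly continuous on $G\und x$: for each $i$ one must produce a finite $W\subseteq G$ such that $(g\und x)|_W=(g'\und x)|_W$ implies $gH_i=g'H_i$. The idea is to exploit that $\operatorname{Per}_{H_i}(\und x)$ is a nonempty union of left $H_i$-cosets on which $\und x$ is constant, that $\operatorname{Per}_{H_i}(g\und x)=\operatorname{Per}_{H_i}(\und x)g^{-1}$, and — crucially — that $\bigcup_j\operatorname{Per}_{H_j}(\und x)=G$, so that a sufficiently large finite window reconstructs the level-$i$ coset pattern of $g\und x$; the skeleton property then pins down the coset $gH_i$. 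This is the step where genuinely global information about the Toeplitz configuration, not any finite piece of it, is used, and I expect it to be the main obstacle; it is exactly \cite[Proposition 7]{CortezPetite}, whose argument — as remarked above — does not require finite generation of $G$. Granting uniform continuity, $\pi$ extends to a continuous equivariant surjection $\pi\colon\overline{G\und x}\to\hat G$.

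Finally I would identify the fibre over $\hat e$ and deduce maximality. If $\pi(y)=\hat e$, write $y=\lim_n g_n\und x$ with $g_n\hat e\to\hat e$; for each $g\in\operatorname{Per}_{H_i}(\und x)$ one then has $\und x(gg_n)=\und x(g)$ for all large $n$, so $y(g)=\und x(g)$, and since the sets $\operatorname{Per}_{H_i}(\und x)$ exhaust $G$ this gives $y=\und x$; hence $\pi^{-1}(\hat e)=\{\und x\}$ and $\pi$ is almost one-to-one. By the universal property of the maximal equicontinuous factor $X_{\mathrm{eq}}$ of $\overline{G\und x}$, the (equicontinuous) factor $\pi$ factors as $\overline{G\und x}\to X_{\mathrm{eq}}\xrightarrow{p}\hat G$; since $\pi^{-1}(\hat e)$ is a singleton so is $p^{-1}(\hat e)$, and a factor map between minimal equicontinuous systems has fibres of constant cardinality, so $p$ is injective and hence an isomorphism. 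Therefore $\hat G$ is the maximal equicontinuous factor of $(\overline{G\und x},G)$.
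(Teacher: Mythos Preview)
The paper does not supply its own proof of this theorem: it is quoted verbatim from \cite[Proposition~7]{CortezPetite}, with only the parenthetical remark that the standing hypothesis there of finite generation of $G$ is not needed. So there is no in-paper argument to compare your sketch against. Your outline is in fact the standard route taken in \cite{CortezPetite}: define $\pi$ on the orbit, establish uniform continuity, extend, identify the fibre over $\hat e$, and deduce maximality via the universal property of the maximal equicontinuous factor.

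Two comments on the sketch itself. First, at the step you single out as the crux (uniform continuity of $\pi$ on $G\und x$) you explicitly invoke \cite[Proposition~7]{CortezPetite}, which is the very result under discussion; so the plan is really ``follow Cortez--Petite'' rather than an independent argument, and is not self-contained precisely where it matters. Second, your fibre computation has a left/right coset issue when the $H_i$ are not normal. From $g\in\Per_{H_i}(\und x)$ you know $\und x(\gamma g)=\und x(g)$ for $\gamma\in H_i$ (left multiplication), whereas with the paper's shift convention $(g_n\und x)(g)=\und x(gg_n)$; hence to get $\und x(gg_n)=\und x(g)$ you need $g_n\in g^{-1}H_ig$, not merely $g_n\in H_i$. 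Knowing $g_n\in H_j$ eventually for every $j$ does not give this unless some $H_j$ sits inside the conjugate $g^{-1}H_ig$, which the bare definition of a skeleton does not guarantee. For normal $H_i$ the issue evaporates; in general one needs either an additional cofinality property of the skeleton or a different argument that reads off $\Per_{H_i}(y,a)=\Per_{H_i}(\und x,a)$ directly from $\pi(y)=\hat e$. This is dealt with in \cite{CortezPetite}, but your line ``for each $g\in\Per_{H_i}(\und x)$ one then has $\und x(gg_n)=\und x(g)$'' skips over it.
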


\begin{lem}[\cite{CortezPetite}, Lemma 2]\label{factor2}
Let $\hat G_1=\lim\limits_{\longleftarrow}(G/H^{(1)}_i,\pi_i)$ and $\hat G_2=\lim\limits_{\longleftarrow}(G/H^{(2)}_i,\pi_i)$ be two $G$-odometers. For $i\in\{1,2\}$ and $j\in\N$ let $\tilde e_i^{(j)}$ be the class of $e$ in $G/H^{(i)}_j$. Then the following conditions are equivalent:
\begin{enumerate}[leftmargin=*]
\item There is a factor map $\pi\colon (\hat G_1, G)\to (\hat G_2, G)$ such that $\pi(\{\tilde e^{(1)}_j\}_{j\in\N})=\{\tilde e^{(2)}_j\}_{j\in\N}$.
\item For every $i\in\N$ there exists $k\in\N$ such that $H^{(1)}_k\subset H^{(2)}_i$.
\end{enumerate}
\end{lem}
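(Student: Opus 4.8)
The statement to prove is Lemma~\ref{factor2} (from Cortez–Petite), characterizing when there is a factor map between two $G$-odometers sending base point to base point, in terms of cofinality of the subgroup sequences. Let me sketch a proof.

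The plan is to prove both implications of the equivalence, using the concrete description of a $G$-odometer as an inverse limit. Recall $\hat G_i = \lim_{\longleftarrow}(G/H^{(i)}_j, \pi_j)$, so a point of $\hat G_i$ is a compatible sequence $(g_j H^{(i)}_j)_j$, and the $G$-action is by left translation on each coordinate. The base point is $(\tilde e^{(i)}_j)_j$.

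For $(2)\Rightarrow(1)$: Given that for every $i$ there is $k=k(i)$ with $H^{(1)}_{k(i)}\subset H^{(2)}_i$, I would first note that we may assume $k(i)$ is increasing in $i$ (replace $k(i)$ by $\max(k(1),\ldots,k(i))$, using that the $H^{(1)}_j$ are nested so a smaller-index group still contains the larger one... wait, nested means $H^{(1)}_1\supset H^{(1)}_2\supset\cdots$, so $H^{(1)}_{k'}\subset H^{(1)}_{k}$ when $k'\ge k$; hence if $H^{(1)}_{k(i)}\subset H^{(2)}_i$ then also $H^{(1)}_{k'}\subset H^{(2)}_i$ for all $k'\ge k(i)$, so indeed we can take $k(i)$ nondecreasing). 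Then define $\pi\colon \hat G_1\to\hat G_2$ coordinatewise: send $(g_jH^{(1)}_j)_j$ to the sequence whose $i$-th coordinate is $g_{k(i)}H^{(2)}_i$. This is well-defined because $H^{(1)}_{k(i)}\subset H^{(2)}_i$ means the map $G/H^{(1)}_{k(i)}\to G/H^{(2)}_i$ is well-defined; it is compatible with the projections $\pi_i^{(2)}$ because of compatibility in $\hat G_1$ and nestedness; it is continuous (the odometer topology makes coordinate projections continuous, and $\pi$ is built from them), surjective (each coordinate map $G/H^{(1)}_{k(i)}\to G/H^{(2)}_i$ is onto, plus a compactness/inverse-limit argument to lift), and $G$-equivariant since left translation commutes with all quotient maps. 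Finally $\pi$ sends the base point to the base point since $e$ maps to $e$ in each quotient.

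For $(1)\Rightarrow(2)$: Suppose $\pi\colon \hat G_1\to\hat G_2$ is a factor map with $\pi(\tilde e^{(1)}) = \tilde e^{(2)}$, where I abbreviate the base points. The stabilizer of $\tilde e^{(1)}_j$ in $G$ (acting on the $j$-th coordinate of $\hat G_1$) is $H^{(1)}_j$, so the stabilizer of the base point $\tilde e^{(1)}$ in $\hat G_1$ is $\bigcap_j H^{(1)}_j$; similarly for $\hat G_2$. By equivariance, $\pi$ maps the $G$-orbit of $\tilde e^{(1)}$ onto the $G$-orbit of $\tilde e^{(2)}$, and the $G$-orbit of the base point is dense in each odometer (this is a standard fact — odometers are minimal, or at least the base-point orbit is dense by construction of the inverse limit topology). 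Now fix $i$. Consider the clopen set $U = \{y\in\hat G_2 : y_i = \tilde e^{(2)}_i\}$, which is a neighborhood of $\tilde e^{(2)}$, and whose return-time set to itself for the base point is exactly $\{g\in G: g\in H^{(2)}_i\} = H^{(2)}_i$. Its preimage $\pi^{-1}(U)$ is a clopen neighborhood of $\tilde e^{(1)}$, so it contains a basic neighborhood $\{x : x_k = \tilde e^{(1)}_k\}$ for some $k$ (the basic clopen sets $\{x: x_k = \tilde e^{(1)}_k\}$ form a neighborhood base at the base point because the odometer topology is the inverse limit topology and these sets shrink as $k\to\infty$ — here I should be slightly careful and perhaps pass to a basic neighborhood contained in $\pi^{-1}(U)$). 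Then for $g\in H^{(1)}_k$ we have $g\tilde e^{(1)}$ lies in this basic set, hence in $\pi^{-1}(U)$, hence $\pi(g\tilde e^{(1)}) = g\tilde e^{(2)}\in U$, which forces $g\in H^{(2)}_i$. Thus $H^{(1)}_k\subset H^{(2)}_i$, giving $(2)$.

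The main obstacle I anticipate is the bookkeeping around the odometer topology: being precise that the sets $\{x\in\hat G_i : x_k = \tilde e^{(i)}_k\}$ form a neighborhood base at the base point, that the base-point orbit is dense, and that a continuous equivariant map between inverse limits is (or can be taken to be) given coordinatewise by group-quotient maps. None of these is deep, but each requires unwinding the inverse-limit construction carefully, and in the forward direction one must be careful that $\pi$ need not a priori respect the coordinate structure — which is exactly why the argument goes through clopen sets and return times rather than trying to read off coordinate maps directly. As the excerpt notes, the original proof in \cite{CortezPetite} is stated for finitely generated $G$, but nothing in the above uses finite generation, so it carries over verbatim.
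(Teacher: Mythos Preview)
The paper does not supply its own proof of Lemma~\ref{factor2}; it is quoted verbatim from \cite{CortezPetite} and used as a black box. Your argument is the standard one and is correct: for $(2)\Rightarrow(1)$ the coordinatewise quotient maps assemble into a continuous $G$-equivariant map of inverse limits sending base point to base point (surjectivity follows since the image is closed and contains the dense $G$-orbit of $\tilde e^{(2)}$), and for $(1)\Rightarrow(2)$ the cylinder/return-time argument you give is exactly right. Your closing remark that finite generation of $G$ plays no role matches the paper's own observation preceding Theorem~\ref{factor1}.
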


\begin{cor}\label{regularwell}
If $\und x\in\alf^G$ is a Toeplitz configuration and $\{H^{(1)}_i\}_{i\in\N}$, $\{H^{(2)}_i\}_{i\in\N}$ are skeletons of $\und x$, then for every $i\in\N$ there exists $k\in\N$ such that $H^{(1)}_k\subset H^{(2)}_i$.
\end{cor}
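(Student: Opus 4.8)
The statement to prove is Corollary~\ref{regularwell}: if $\{H^{(1)}_i\}_{i\in\N}$ and $\{H^{(2)}_i\}_{i\in\N}$ are both skeletons of the same Toeplitz configuration $\und x\in\alf^G$, then for every $i$ there is $k$ with $H^{(1)}_k\subset H^{(2)}_i$. The natural route is to deduce this purely formally from Theorem~\ref{factor1} and Lemma~\ref{factor2}, which are quoted just above; no new dynamical work should be needed.

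First I would invoke Theorem~\ref{factor1} twice. Applied to the skeleton $\{H^{(1)}_i\}_{i\in\N}$ it gives that $\hat G_1:=\varprojlim(G/H^{(1)}_i,\pi_i)$ is the maximal equicontinuous factor of $(\overline{G\und x},G)$, together with a factor map $\pi^{(1)}\colon\overline{G\und x}\to\hat G_1$ sending $\und x$ to the distinguished point $\{eH^{(1)}_i\}_{i}$. Applied to the skeleton $\{H^{(2)}_i\}_{i\in\N}$ it gives the same for $\hat G_2:=\varprojlim(G/H^{(2)}_i,\pi_i)$, with a factor map $\pi^{(2)}\colon\overline{G\und x}\to\hat G_2$ sending $\und x$ to $\{eH^{(2)}_i\}_{i}$. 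Since the maximal equicontinuous factor is unique up to isomorphism, there is a factor isomorphism $\psi\colon\hat G_1\to\hat G_2$; moreover $\psi$ can (and must) be taken to send $\{eH^{(1)}_i\}_i$ to $\{eH^{(2)}_i\}_i$ — this is because $(\pi^{(1)})^{-1}$ and $(\pi^{(2)})^{-1}$ of the respective distinguished points both equal the singleton $\{\und x\}$, so the composite $\pi^{(2)}\circ(\pi^{(1)})^{-1}$ is a well-defined factor map carrying the base point to the base point, and one checks it coincides with (a choice of) $\psi$.

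Having produced a factor map $\psi\colon(\hat G_1,G)\to(\hat G_2,G)$ with $\psi(\{\tilde e^{(1)}_j\}_j)=\{\tilde e^{(2)}_j\}_j$ — in the notation of Lemma~\ref{factor2}, where $\tilde e^{(i)}_j$ is the class of $e$ in $G/H^{(i)}_j$ — I would then apply the implication $(1)\Rightarrow(2)$ of Lemma~\ref{factor2}. This yields exactly the conclusion: for every $i\in\N$ there exists $k\in\N$ such that $H^{(1)}_k\subset H^{(2)}_i$. That finishes the proof.

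The only genuinely delicate point is the middle step: checking that the isomorphism of maximal equicontinuous factors respects the distinguished base points. One has to argue that $\pi^{(2)}\circ(\pi^{(1)})^{-1}$ is actually a well-defined continuous $G$-map on all of $\hat G_1$, not merely a relation. This follows because $(\pi^{(1)})^{-1}(z)$ for $z\in\hat G_1$ is always a single fibre whose image under $\pi^{(2)}$ is a single point, using minimality of $(\overline{G\und x},G)$ together with the fact that both $\pi^{(1)}$ and $\pi^{(2)}$ are the factor maps onto the (same) maximal equicontinuous factor — equivalently, that $\pi^{(1)}$ and $\pi^{(2)}$ induce the same invariant closed equivalence relation on $\overline{G\und x}$. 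Once that identification is in place, the base-point condition $\pi^{-1}(\{eH^{(1)}_i\}_i)=\{\und x\}=\pi^{-1}(\{eH^{(2)}_i\}_i)$ from Theorem~\ref{factor1} forces $\psi(\{\tilde e^{(1)}_j\}_j)=\{\tilde e^{(2)}_j\}_j$, and the rest is a direct citation of Lemma~\ref{factor2}.
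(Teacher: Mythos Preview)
Your proof is correct and follows the same overall strategy as the paper: invoke Theorem~\ref{factor1} to get factor maps $\pi^{(i)}\colon\overline{G\und x}\to\hat G_i$ with singleton fibre over the base point, produce from them a base-point-preserving factor map $\hat G_1\to\hat G_2$, and then apply the implication $(1)\Rightarrow(2)$ of Lemma~\ref{factor2}.

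The only difference is in the construction of the factor map $\hat G_1\to\hat G_2$. You use the universal property of the maximal equicontinuous factor: since $\hat G_1$ and $\hat G_2$ are both the MEF of $(\overline{G\und x},G)$, the maps $\pi^{(1)}$ and $\pi^{(2)}$ have the same fibres, so $\psi:=\pi^{(2)}\circ(\pi^{(1)})^{-1}$ is a well-defined continuous $G$-equivariant surjection; the base-point condition then follows from $(\pi^{(i)})^{-1}(\{\tilde e^{(i)}_j\}_j)=\{\und x\}$. The paper instead uses Zorn's lemma to select a minimal compact $F\subset\overline{G\und x}$ with $\pi_1(F)=\hat G_1$, observes that $\pi_1|_F$ is a homeomorphism, and sets $\pi:=\pi_2\circ(\pi_1|_F)^{-1}$. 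Your route is more conceptual and makes the $G$-equivariance of $\psi$ transparent; the paper's construction is more hands-on but leaves $G$-equivariance of $\pi$ implicit (it ultimately relies on the same fibre equality you invoke). Either way the argument is short and the conclusion is the same.
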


\begin{proof}
Let $\hat G_1$, $\hat G_2$ be $G$-odometers associated with $\{H^{(1)}_i\}_{i\in\N}$ and $\{H_i^{(2)}\}_{i\in\N}$, respectively.
It follows from Lemma~\ref{factor2} that to prove our claim it is enough to construct a factor map $\pi\colon (\hat G_1, G)\to(\hat G_2,G)$ such that $\pi(\{\tilde e^{(1)}_j\}_{j\in\N})=\{\tilde e^{(2)}_j\}_{j\in\N}$, where $\tilde e_j^{(i)}$'s are defined as in Lemma~\ref{factor2}.
For $i\in\{1,2\}$ let $\pi_i\colon(\overline{Gx},G)\to (\hat G_i,G)$ be a factor map provided by Theorem~\ref{factor1}. It follows from the Zorn lemma that there exists a compact set $F\subset \overline {Gx}$ such that $\pi_1(F)=\hat G_1$ but for every its proper compact subset $F'$ one has $\pi_1(F')\neq \hat G_1$. Define $\tilde \pi_1=\pi_1|_F$. Then $\tilde\pi_1$ is injective. Therefore $\tilde\pi_1$ is a homeomorphism and $\pi:=\pi_2\circ\tilde\pi_1^{-1}\colon \hat G_1\to\hat G_2$ satisfies requested conditions.
\end{proof}




For $g\in G$ and $m\in\N$ denote by $\tilde{g}^{(m)}$ an equivalence class of $g$ in $G/H_m$.
If  $\und x\in  \alf^G$ is a Toeplitz configuration such that \[\bigcup_{n\in\N}\text{Per}_{H_n}(\und x)=G,\]
then we say that $\und x$ is a \emph{Toeplitz configuration with respect to $\{H_n\}_{n\in\N}$}.
We denote by $\hat\mu$ the Haar measure on $\hat G$. 
Let $\phi\colon G\to\hat G$ be the natural homomorphism defined as
$\phi(g)=\left\{\tilde{g}^{(n)}\right\}_{n\in\N}$.

We denote by $[\tilde{a}^{(k)}]$ the $k$-cylinder of $\tilde{a}^{(k)}$, that is
\[[\tilde{a}^{(k)}]=\left\{\{a_n\}_{n\in\N}\in\hat{G}\,:\,a_k=\tilde{a}^{(k)}\right\}.\]
Let $\Pe_k$ be the clopen partition of $\hat G$ into $|G:H_k|$ $k$-cylinders.
For $f\colon \hat G\to \alf$ define $U(f)$ as the union of all sets in $\bigcup_{n\in\N}\Pe_n$ on which $f$ is constant.

The following proposition gives a characterization of Toeplitz configurations with respect to $\{H_n\}_{n\in\N}$.

\begin{prop}\label{haar}
 Let $f\colon \hat{G}\to \alf$ be a map satisfying $U(f)\supset \phi(G)$. Then the sequence given by $\eta(g)=f(\phi(g))$ is a Toeplitz configuration with respect to $\{H_n\}_{n\in\N}$.
Moreover, if $\{\eta(g)\}_{g\in G}$ is a Toeplitz configuration with respect to $\{H_n\}_{n\in\N}$, then there exists $f\colon \hat{G}\to \alf$ such that $U(f)\supset\phi(G)$ and $\eta(g)=f(\phi(g))$ for every $g\in G$.
\end{prop}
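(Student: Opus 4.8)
The plan is to verify the two implications of Proposition~\ref{haar} separately, using the fact that membership of $g$ in $\Per_{H_n}(\und x)$ is governed precisely by the cylinder $[\tilde g^{(n)}]$ of $\hat G$ that contains $\phi(g)$. The underlying observation, which I would state explicitly first, is this: for $g,h\in G$ one has $\phi(g)\in[\tilde h^{(n)}]$ if and only if $g\in H_n h$ (equivalently $gh^{-1}\in H_n$, using that $H_n\triangleleft G$), since $\phi(g)=\{\tilde g^{(k)}\}_{k\in\N}$ and the $n$-th coordinate of $\phi(h)$ is $\tilde h^{(n)}$. Consequently, if $f$ is constant on a cylinder $[\tilde h^{(n)}]$, then $f\circ\phi$ is constant on the coset $H_n h$, i.e. $\eta(\gamma h)=\eta(h)$ for all $\gamma\in H_n$, which says exactly $h\in\Per_{H_n}(\eta)$.

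For the first implication, suppose $f\colon\hat G\to\alf$ satisfies $U(f)\supset\phi(G)$ and put $\eta(g)=f(\phi(g))$. Fix $g\in G$. Since $\phi(g)\in U(f)$, by definition of $U(f)$ there is some $n\in\N$ and some cell $P\in\Pe_n$ with $\phi(g)\in P$ and $f|_P$ constant. Every cell of $\Pe_n$ is an $n$-cylinder $[\tilde h^{(n)}]$, and $\phi(g)\in[\tilde h^{(n)}]$ forces $P=[\tilde g^{(n)}]$. By the observation above, $f|_{[\tilde g^{(n)}]}$ constant gives $g\in\Per_{H_n}(\eta)$. Since $g$ was arbitrary, $\bigcup_{n\in\N}\Per_{H_n}(\eta)=G$, which is precisely the statement that $\eta$ is a Toeplitz configuration with respect to $\{H_n\}_{n\in\N}$.

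For the converse, let $\eta=\{\eta(g)\}_{g\in G}$ be a Toeplitz configuration with respect to $\{H_n\}_{n\in\N}$. I want to define $f\colon\hat G\to\alf$ on $\phi(G)$ by $f(\phi(g))=\eta(g)$ and then extend it arbitrarily to $\hat G\setminus\phi(G)$ (say, constantly equal to some fixed letter). The first point to check is that this is well defined on $\phi(G)$: if $\phi(g)=\phi(g')$ then $g(g')^{-1}\in\bigcap_n H_n$, and since $g\in\Per_{H_n}(\eta)$ for some $n$ with $g(g')^{-1}\in H_n$, we get $\eta(g)=\eta\big((g(g')^{-1})g'\big)=\eta(g')$. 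The second and main point is to check $U(f)\supset\phi(G)$, i.e. that each $\phi(g)$ lies in some cell of some $\Pe_n$ on which $f$ is constant. Given $g$, pick $n$ with $g\in\Per_{H_n}(\eta)$; I claim $f$ is constant on $[\tilde g^{(n)}]$. The subtlety here — and I expect this to be the only real obstacle — is that $[\tilde g^{(n)}]$ need not be contained in $\phi(G)$, so one cannot simply quote the cocycle identity; however, on the part $[\tilde g^{(n)}]\cap\phi(G)$ the value of $f$ is $\eta(h)$ for any $h$ with $\phi(h)\in[\tilde g^{(n)}]$, i.e. $h\in H_n g$, and then $\eta(h)=\eta(\gamma g)=\eta(g)$ since $\gamma:=hg^{-1}\in H_n$ and $g\in\Per_{H_n}(\eta)$. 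Thus $f\equiv\eta(g)$ on $[\tilde g^{(n)}]\cap\phi(G)$. To make $f$ genuinely constant on all of $[\tilde g^{(n)}]$, I would not extend $f$ arbitrarily but rather more carefully: for each $x\in\hat G$, set $f(x)=\eta(g)$ whenever $x$ lies in a cylinder $[\tilde g^{(n)}]$ with $g\in\Per_{H_n}(\eta)$ (this is consistent by the same argument run on overlapping cylinders: if $x\in[\tilde g^{(n)}]\cap[\tilde{g'}^{(m)}]$ with $n\le m$ and $g\in\Per_{H_n}(\eta)$, $g'\in\Per_{H_m}(\eta)$, then $g'g^{-1}\in H_n$ since $[\tilde{g'}^{(m)}]\subset[\tilde g^{(n)}]$, so $\eta(g')=\eta(g)$), and set $f(x)$ to be an arbitrary fixed letter for the remaining $x$. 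Then $U(f)\supset\phi(G)$ holds by construction, and $f(\phi(g))=\eta(g)$ for every $g$, since for any $g$ one may take $n$ with $g\in\Per_{H_n}(\eta)$ and $\phi(g)\in[\tilde g^{(n)}]$, giving $f(\phi(g))=\eta(g)$. This completes the proof.
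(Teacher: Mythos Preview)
Your proof is correct and follows essentially the same approach as the paper. For the first implication both arguments pick a cylinder of some level $n$ containing $\phi(g)$ on which $f$ is constant and deduce $g\in\Per_{H_n}(\eta)$; for the converse both define $f$ cylinder-by-cylinder via $f|_{[\tilde g^{(n)}]}\equiv\eta(g)$ whenever $g\in\Per_{H_n}(\eta)$, check consistency on overlapping cylinders using the nesting $[\tilde{g'}^{(m)}]\subset[\tilde g^{(n)}]$, and extend arbitrarily elsewhere. Your initial detour (defining $f$ only on $\phi(G)$ and extending constantly) is correctly diagnosed as insufficient and then repaired exactly as in the paper; the parenthetical appeal to normality of $H_n$ is harmless but unnecessary, since $g\in H_n h\iff gh^{-1}\in H_n$ holds for any subgroup.
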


\begin{proof}
 Suppose that $U(f)\supset\phi(G)$. For $g\in G$, there exists $k\in\N$ and a cylinder $P\in \Pe_k$ containing $\phi(g)$ such that $f$ is constant on $P$.
 We claim that for every $h\in H_k$ one has $\eta(g)=\eta(hg)$. Clearly, $h\in H_k$ implies $\tilde h^{(i)}=\tilde{e}^{(i)}$ for every $i\leq k$.
Thus for every $P\in\mathcal P_k$ and $\und x=\{x_n\}_{n\in\N}\in P\subset\hat G$ we have $\phi(h)\cdot\und x=\{\tilde h^{(n)}x_n\}_{n\in\N}\in P$. 
  Then \[\eta(hg)=f(\phi(hg))=f(\phi(h)\phi(g))=f(\phi(g))=\eta(g).\]

Conversely, assume that $\eta\in\alf^G$ is a Toeplitz configuration with respect to $\{H_n\}_{n\in\N}$.
Let $g\in G$. Then $g\in\text{Per}_{H_k}(\eta, a)$ for some $k\in\N$ and $a\in\alf$. Define $f(x)=a$ for every $x\in[\tilde g^{(k)}]$. For all $x\in\hat G$ on which $f$ is not already defined let $f$ be arbitrary. To see that $f$ is well defined suppose that $x\in[\tilde g^{(k)}]\cap[\tilde\gamma^{(l)}]$ for some $g,\gamma\in G$ and $k\leq l$. Then $[\tilde\gamma^{(l)}]\subset [\tilde g^{(k)}]$ and so $\tilde\gamma^{(k)}=\tilde g^{(k)}$. Therefore $H_k\gamma=H_kg$ and $\eta(\gamma)=\eta(g)$. It is also clear that $\phi(G)\subset U(f)$. The proof is completed.
\end{proof}

\begin{rem}
The above lemma can be used to show that if $\und x$ is a Toeplitz configuration over $G$ and $G$ is a countable amenable residually finite group, then the set $\{g\in G\colon \und x(g)=1\}$ is a model set (cf.  \cite{BJL}).
\end{rem}


From now on we assume again that $G$ is amenable.
We call a Toeplitz configuration $\und x\in \alf^G$ \emph{regular}
if there exists a skeleton $\{H_n\}_{n\in\N}$ of $\und x$ such that
\[\sup_{n\in\N} D^*\big(\text{Per}_{H_n}(\und x)\big)=1.\]
\begin{rem}
The above definition is inspired by the notion of a regular Toeplitz configuration over $\mathbb Z$. It is known that such sequences are strictly ergodic and isomorphic in the measure theoretical category to their maximal equicontinuous factors, where in the Toeplitz configurations one considers the unique ergodic measure.
\end{rem}

\begin{rem}
By Corollary~\ref{regularwell} that if $\und x$ is regular, then for every skeleton $\{H_n\}_{n\in\N}$ one has 
\[\sup_{n\in\N} D^*\big(\text{Per}_{H_n}(\und x)\big)=1.\]
\end{rem}

\begin{rem}
If for every $N\in\N$ the group $G$ has only finitely many finite index subgroups of index at most $N$ then there exists a universal sequence of finite index subgroups $\{H_n\}_{n\in\N}$ such that for every $\Gamma<_fG$ there exists $n\in\N$ such that $H_n<\Gamma$. To see this enumerate finite index subgroups as $\Gamma_1,\Gamma_2,\ldots$ and define $H_n:=\Gamma_1\cap\ldots\cap\Gamma_n.$
\end{rem}

We say that $\und x\in \alf^G$ is \emph{periodic} if there exists $H<_fG$ such that $\text{Per}_{H}(\und x)=G.$

\begin{lem}\label{Z}
(cf. \cite[Lemma 2.6]{DHZ})
Let $F,K\in\Fin(G)$ and $\eps>0$. If $F$ is $(K,\eps/|K|)$-invariant, then 
$|\{s\in F\,:\,Ks\subset F\}|\geq (1-\eps)|F|.$
\end{lem}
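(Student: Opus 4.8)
The statement to prove is Lemma~\ref{Z}: if $F$ is $(K,\eps/|K|)$-invariant, meaning $|KF\triangle F|\leq(\eps/|K|)|F|$, then the set of ``interior'' points $\{s\in F:Ks\subset F\}$ has cardinality at least $(1-\eps)|F|$. The key observation is that the complement of the interior inside $F$ is exactly the set of $s\in F$ for which $ks\notin F$ for some $k\in K$, i.e. $\{s\in F:Ks\not\subset F\}=\bigcup_{k\in K}\{s\in F:ks\notin F\}$. So first I would note the union bound
\[
\bigl|\{s\in F:Ks\not\subset F\}\bigr|\leq\sum_{k\in K}\bigl|\{s\in F:ks\notin F\}\bigr|.
\]

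**Controlling each term.** For a fixed $k\in K$, the set $\{s\in F:ks\notin F\}$ is in bijection (via left multiplication by $k$) with $\{ks:s\in F,\ ks\notin F\}=kF\setminus F\subseteq kF\triangle F$. Hence $|\{s\in F:ks\notin F\}|\leq|kF\triangle F|$. Now I would relate $|kF\triangle F|$ to $|KF\triangle F|$: since $kF\setminus F\subseteq KF\setminus F$ (as $kF\subseteq KF$) and $F\setminus kF$... here one must be a little careful, but in fact the cleaner route is $|kF\triangle F|=|kF\setminus F|+|F\setminus kF|$ and $|kF\setminus F|=|F\setminus k^{-1}F|$; alternatively, simply observe $kF\setminus F\subseteq KF\setminus F$ so $|kF\setminus F|\leq|KF\setminus F|\leq|KF\triangle F|\leq(\eps/|K|)|F|$, and since $|kF|=|F|$ we get $|F\setminus kF|=|kF\setminus F|$ as well, so $|kF\triangle F|\leq 2|kF\setminus F|\leq(2\eps/|K|)|F|$. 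That would give a factor of $2$; to get the clean bound $(1-\eps)$ one should instead only use $|\{s\in F:ks\notin F\}|=|kF\setminus F|\leq|KF\setminus F|$ directly without passing through the symmetric difference with a factor $2$.

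**Assembling.** Using $|\{s\in F:ks\notin F\}|\leq|KF\setminus F|\leq|KF\triangle F|\leq(\eps/|K|)|F|$ for each of the $|K|$ elements $k\in K$, the union bound yields
\[
\bigl|\{s\in F:Ks\not\subset F\}\bigr|\leq|K|\cdot\frac{\eps}{|K|}|F|=\eps|F|,
\]
and therefore $|\{s\in F:Ks\subset F\}|=|F|-|\{s\in F:Ks\not\subset F\}|\geq(1-\eps)|F|$, which is the claim.

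**Main obstacle.** There is no serious obstacle here; this is a counting lemma. The only point requiring care is the bookkeeping in the second paragraph: one must make sure to bound $|\{s\in F:ks\notin F\}|$ by $|kF\setminus F|$ and then by $|KF\setminus F|$ (hence by $|KF\triangle F|$) \emph{without} inadvertently introducing a factor of $2$ by symmetrizing, since the hypothesis is stated with the constant $\eps/|K|$ tuned precisely so that the $|K|$-fold union bound lands exactly at $\eps|F|$. (If one does go through $|kF\triangle F|$, one should note $e\in K$ may be assumed or, more simply, that the bound $|kF\setminus F|\le |KF\setminus F|$ suffices and no symmetric difference is needed on the $k$-level at all.)
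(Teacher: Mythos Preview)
Your argument is correct and is the standard proof of this counting lemma. The paper itself does not include a proof of Lemma~\ref{Z}; it simply cites \cite[Lemma 2.6]{DHZ}, so there is nothing to compare against beyond noting that your argument is exactly the usual one: bound the ``bad'' set $\{s\in F:Ks\not\subset F\}$ by the union $\bigcup_{k\in K}\{s\in F:ks\notin F\}$, observe that each term has size $|kF\setminus F|\le|KF\setminus F|\le|KF\triangle F|\le(\eps/|K|)|F|$, and sum over $k\in K$.

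One presentational remark: your middle paragraph takes a detour through $|kF\triangle F|$ and the spurious factor of $2$ before backing out of it. In a clean write-up you should drop that discussion entirely and go straight from $|\{s\in F:ks\notin F\}|=|kF\setminus F|$ to $|kF\setminus F|\le|KF\setminus F|\le|KF\triangle F|$, as you do correctly in the ``Assembling'' paragraph. There is no mathematical gap, only an expository one.
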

A \emph{fundamental domain} for $H<_fG$ is a set $F$ such that for every $g\in G$ we have $|Hg\cap F|=1.$
\begin{lem}\label{sksksk}
Let $H<_fG$ and $K$ be a fundamental domain for $H$. If $B$ is a union of cosets of $H$, then $D^*(B)=D_*(B)=  |B\cap K| / |K|.$
\end{lem}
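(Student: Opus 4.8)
The key observation is that the set $B$, being a union of cosets of $H$, is ``$H$-periodic'' from the left: $HB = B$, and a fundamental domain $K$ picks out exactly one representative from each coset of $H$, so $B$ intersects a set of the form $Hg$ either in all of it or not at all, according to whether $g \in B$ or not. The plan is to first establish the upper and lower Banach density simultaneously by showing that the density of $B$ along \emph{any} \Folner sequence $\mathcal F=\{F_n\}_{n\in\N}$ equals $|B\cap K|/|K|$; by the formulas recalled in the subsection on densities (namely $D^*(B)=\sup_{\mathcal F}\limsup_n D_{F_n}(B)$ and $D_*(B)=\inf_{\mathcal F}\limsup_n D_{F_n}(B)$), this forces $D^*(B)=D_*(B)=|B\cap K|/|K|$.

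\textbf{Key steps.} First I would reduce to counting via cosets of $H$: since $B=HB$, for any finite $F\subset G$ one has $F\cap B=\bigcup\{Hg\cap F : g\in K,\ g\in B\}$ (using that $K$ is a transversal), so $|F\cap B|=\sum_{g\in K\cap B}|Hg\cap F|$, while $|F|=\sum_{g\in K}|Hg\cap F|$. Thus $D_F(B)$ is a weighted average of the indicator of $B$ over cosets, with weights $|Hg\cap F|/|F|$. Second, I would use the \Folner property to show these weights equidistribute: if $K=\{k_1,\dots,k_r\}$ with $r=|G:H|$, then for a \Folner sequence the quantities $|Hk_i\cap F_n|/|F_n|$ all converge to $1/r$. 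This follows because $H$ is of finite index, hence $HK=G$ is a disjoint union of the $r$ cosets, and left-translation by a fixed element of $H$ permutes the cosets transitively; more concretely, fix coset representatives and note that $|Hk_i \cap F_n|$ and $|Hk_j \cap F_n|$ differ only by boundary terms controlled by translating $F_n$ through the finitely many elements needed to move between cosets, which is $o(|F_n|)$ by the \Folner condition. Hence $D_{F_n}(B)\to \frac{1}{r}\,|\{i : k_i\in B\}| = |B\cap K|/|K|$. Third, since this limit exists and is independent of the \Folner sequence, plugging into the sup/inf characterizations of $D^*$ and $D_*$ gives the claim.

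\textbf{Main obstacle.} The only real content is the equidistribution of the cosets, i.e. that $|Hk_i\cap F_n|/|F_n|\to 1/|G:H|$ along every \Folner sequence. One clean way is this: write $G$ as the disjoint union $\bigsqcup_{i=1}^r Hk_i$; if $h_{ij}\in H$ is such that $h_{ij}k_i$ and $k_j$ lie in the same coset (they do, for a suitable choice, since all $Hk_i$ are cosets of the same $H$ — actually one should instead argue using right cosets or pass to a normal core), then left multiplication by appropriate group elements maps $Hk_i$ bijectively onto $Hk_j$, and the \Folner condition bounds $||gF_n|\cap A| - |F_n\cap A||$ by $|gF_n\triangle F_n|=o(|F_n|)$ for any $A$. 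A safe route avoiding left/right subtleties: the function $H\colon\Fin(G)\to[0,\infty)$, $H(F)=|F\cap B|$, is $G$-invariant under right translation? It is not in general, so instead I would invoke directly that $|F_n\cap B|/|F_n|$ converges by applying the subadditive-type result (Theorem~\ref{LW}) to $F\mapsto |F\cap B|$, which is monotone, subadditive, vanishes on $\emptyset$, and is right-$G$-invariant precisely because $B$ is a union of \emph{right}... — here one must be careful about the convention. Granting right-invariance of $F\mapsto|F\cap B|$ (equivalently that $B$ is right-$H$-invariant, which is the correct reading of ``union of cosets of $H$'' to make fundamental domains work on the correct side), Theorem~\ref{LW} gives that $|F_n\cap B|/|F_n|$ converges to a \Folner-independent constant, and evaluating on the \Folner sequence obtained by tiling $G$ with translates of $K$ (a fundamental domain tiles $G$, and one can build an exhausting \Folner-like sequence from large unions of such tiles when $G$ is amenable) identifies the constant as $|B\cap K|/|K|$. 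I expect reconciling the left/right conventions and exhibiting the tiling \Folner sequence to be the fiddly part; the density computation itself is immediate once the averaging is in place.
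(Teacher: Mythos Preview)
Your proposal contains two candidate routes, and they have different fates.

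\textbf{The Theorem~\ref{LW} route has a genuine gap.} For Theorem~\ref{LW} to apply to the function $F\mapsto |F\cap B|$ you need $|Fg\cap B|=|F\cap B|$ for \emph{every} $g\in G$, i.e.\ $|F\cap Bg^{-1}|=|F\cap B|$ for all finite $F$ and all $g\in G$, which forces $Bg^{-1}=B$ for all $g\in G$. That is right invariance under the whole group, not merely under $H$; it holds only when $B\in\{\emptyset,G\}$. So this ``safe route'' does not work, and your parenthetical ``granting right-invariance'' is granting something false.

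\textbf{The equidistribution route can be completed,} exactly via the normal core you allude to. Let $N=\bigcap_{g\in G}gHg^{-1}\triangleleft_f G$. Since $N$ is normal, left and right $N$-cosets coincide, and each right coset $Hk_i$ decomposes as a union of $[H:N]$ cosets of $N$. For any coset $aN$ one has $|aN\cap F_n|=|N\cap a^{-1}F_n|$ and $|a^{-1}F_n\triangle F_n|=|F_n\triangle aF_n|=o(|F_n|)$ by the left \Folner condition, so every $N$-coset has asymptotic density $1/[G:N]$ along any \Folner sequence. Summing gives density $1/[G:H]$ for each $Hk_i$ and hence $|K\cap B|/|K|$ for $B$; the sup/inf characterizations of $D^*$ and $D_*$ then finish. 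This is a correct proof once you commit to it rather than hedging.

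\textbf{Comparison with the paper.} The paper avoids the normal core and argues more directly with the fundamental domain itself. For the upper bound it observes that every right translate $Kt$ is again a fundamental domain, so $D_{Kt}(B)=|K\cap B|/|K|$ for all $t$ and the infimum formula gives $D^*(B)\le D_K^*(B)=|K\cap B|/|K|$. For the lower bound it runs a quasi-tiling argument via Lemma~\ref{Z}: in a sufficiently $(K,\eps/|K|)$-invariant $F_n$, the set $S=\{s\in F_n:Ks\subseteq F_n\}$ has $|S|\ge(1-\eps)|F_n|$, each $Ks$ contributes exactly $|K\cap B|$ points of $B$, and double counting (each point of $F_n$ lies in at most $|K|$ translates $Ks$) yields $(1-\eps)|K\cap B|/|K|\le |F_n\cap B|/|F_n|$. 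Your normal-core argument is arguably cleaner and proves the stronger statement that the ordinary density exists along every \Folner sequence; the paper's argument stays within the tiling machinery already set up and never needs an auxiliary subgroup.
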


\begin{proof}
If $B$ is a union of cosets of $H$, then so is its complement. Hence it is enough to show that $D^*(B) = |B\cap K|/|K|$.   Let $\mathcal{F} = \{F_n\}_{n\in\N}$ be a F{\o}lner sequence. Fix $\eps >0$  and pick $N\in\N$ so that for every $n\geq N$, the set $F_n$ is $(K,\eps/|K|)-$invariant. Define $S=\{s\in F_n\,:\,Ks\subseteq F_n\}$. By Lemma~\ref{Z} we get
\begin{equation}\label{eq1} |S|\geq (1-\eps)|F_n|.\end{equation}
Since $K$ is a fundamental domain one has\begin{equation}\label{eq2} \sum_{s\in S} |(Ks)\cap B|=|S||K\cap B|\end{equation}
On the other hand $g\in F_n$ belongs to at most $|K|$ translates of $K$.
Thus \begin{equation}\label{eq3} \sum_{s\in S} |(Ks)\cap B| \leq |K||B\cap F_n| .\end{equation} 
Combining (\ref{eq1}), (\ref{eq2}) and (\ref{eq3}) we obtain 
\[ (1-\eps)|F_n||K\cap B| \leq |K||F_n\cap B| . \]
Dividing both sides by $|F_n||K|$ we get \[ (1-\eps) |K\cap B|/|K|\leq |F_n\cap B|/|F_n|. \]
Taking upper limit on the right hand side leads to \[ (1-\eps) |K\cap B|/|K|\leq \bar d_{\mathcal F} (B) .\]
Since $\eps>0$ and $\mathcal{F}$ are arbitrary, we see
\[ |K\cap B|/|K|\leq \bar d_{\mathcal F} (B)\leq \sup_{\mathcal F} \bar d_{\mathcal F} (B)  = D^*(B) .\]
On the other hand for every $t\in G$ we have 
\[ D_K(B) = D_{Kt}(B) = |K\cap B|/|K|. \] 
\begin{flalign*}
\text{Hence }&\quad D^*_K(B) = |K\cap B|/|K| &
\end{flalign*}
\begin{flalign*}
\text{and }&\quad
D^*(B) = \inf_{F\in\text{Fin}(G)} D^*_F(B)\leq D^*_K(B) = |K\cap B|/|K|.& \qedhere
\end{flalign*}
\end{proof}

\begin{cor}\label{**}
For every $j\in\N$ and $\und x\in \alf^G$ the set $\text{Per}_{H_j}(\und x)$ has Banach density.
This is because $\text{Per}_{H_j}(\und x)$ consists of $m$ cosets of $H_j$ for some $0\leq m\leq |G:H_j|$.
\end{cor}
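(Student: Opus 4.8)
The plan is to deduce the statement directly from Lemma~\ref{sksksk}; the only point that needs a genuine argument is that $\text{Per}_{H_j}(\und x)$ is a union of right cosets of $H_j$, after which Lemma~\ref{sksksk} does the rest.

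First I would verify this coset property. Fix $g\in\text{Per}_{H_j}(\und x)$ and $h\in H_j$, and claim $hg\in\text{Per}_{H_j}(\und x)$. Since $H_j$ is a subgroup, $\gamma h\in H_j$ for every $\gamma\in H_j$, so the defining condition for $g$ gives $\und x(\gamma h g)=\und x(g)$ for all $\gamma\in H_j$. Taking $\gamma=e$ yields $\und x(hg)=\und x(g)$; combining the two equalities we obtain $\und x(\gamma h g)=\und x(hg)$ for every $\gamma\in H_j$, i.e. $hg\in\text{Per}_{H_j}(\und x)$. Hence $H_j\cdot\text{Per}_{H_j}(\und x)=\text{Per}_{H_j}(\und x)$, so $\text{Per}_{H_j}(\und x)$ is a union of right cosets of $H_j$; as $|G:H_j|<\infty$, it is a union of $m$ such cosets for some $0\le m\le |G:H_j|$.

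Then I would simply apply Lemma~\ref{sksksk} with $B=\text{Per}_{H_j}(\und x)$ and $K$ any fundamental domain for $H_j$: it gives $D^*(B)=D_*(B)=|B\cap K|/|K|$. In particular $D^*(B)=D_*(B)$, so $B=\text{Per}_{H_j}(\und x)$ has Banach density, equal to $|B\cap K|/|K|=m/|G:H_j|$.

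There is essentially no obstacle here; the one thing worth double-checking is the coset computation above, which uses only that $H_j$ is a subgroup together with the ``for every $\gamma\in H$'' quantifier in the definition of $\text{Per}_H$.
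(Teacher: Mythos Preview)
Your proposal is correct and matches the paper's approach exactly: the corollary is stated with its justification inline (``This is because $\text{Per}_{H_j}(\und x)$ consists of $m$ cosets of $H_j$\ldots''), and you have simply supplied the short verification of that coset property before invoking Lemma~\ref{sksksk}. There is nothing to add.
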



\begin{lem}\label{sss}
Every regular Toeplitz configuration is a quasi uniform limit of a sequence of periodic sequences.
\end{lem}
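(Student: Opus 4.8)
The plan is to produce, for a regular Toeplitz configuration $\und x$ with skeleton $\{H_n\}_{n\in\N}$, an explicit sequence of periodic configurations $\und x^{(n)}$ and to estimate $D_W(\und x, \und x^{(n)})$ using Corollary~\ref{cor:uniformlyequiv}, which reduces the Weyl distance on $\alf^G$ to the upper Banach density $D^*(\{g: x_g\neq x^{(n)}_g\})$ of the set of coordinates where the two configurations disagree. The natural candidate is to let $\und x^{(n)}$ agree with $\und x$ on $\text{Per}_{H_n}(\und x)$ — where $\und x$ is genuinely $H_n$-periodic — and to fill in the remaining coordinates by extending this partial $H_n$-periodic pattern arbitrarily to a full $H_n$-periodic configuration. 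Concretely, fix a fundamental domain $K_n$ for $H_n$; on $K_n\cap\text{Per}_{H_n}(\und x)$ set $x^{(n)}$ equal to $\und x$, on the rest of $K_n$ pick any symbol, and extend $H_n$-periodically over all of $G$. Then $\text{Per}_{H_n}(\und x^{(n)})=G$, so $\und x^{(n)}$ is periodic, and the disagreement set $\{g: x_g\neq x^{(n)}_g\}$ is contained in $G\setminus\text{Per}_{H_n}(\und x)$, which is a union of cosets of $H_n$.

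The key computation is then: by Lemma~\ref{sksksk} (or directly Corollary~\ref{**}), $G\setminus\text{Per}_{H_n}(\und x)$ has a Banach density equal to $1-D^*(\text{Per}_{H_n}(\und x))$. Regularity gives $\sup_n D^*(\text{Per}_{H_n}(\und x))=1$, so along a suitable subsequence $n_k$ we have $D^*(G\setminus\text{Per}_{H_{n_k}}(\und x))\to 0$, hence $D^*(\{g: x_g\neq x^{(n_k)}_g\})\to 0$. By Corollary~\ref{cor:uniformlyequiv} (uniform equivalence of $D_W$ with the $D^*$-pseudometric on $\alf^G$, with modulus independent of anything), this yields $D_W(\und x, \und x^{(n_k)})\to 0$. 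Passing to the subsequence $\{n_k\}$ gives the desired sequence of periodic configurations converging quasi-uniformly to $\und x$.

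One small point that needs care: the skeleton condition only guarantees $\bigcup_n\text{Per}_{H_n}(\und x)=G$, not that the $\text{Per}_{H_n}(\und x)$ are nested or that $D^*(\text{Per}_{H_n}(\und x))$ is monotone in $n$; so I should phrase the conclusion as extracting a subsequence realizing the supremum $1$ rather than claiming the whole sequence works. Everything else is routine: the periodicity of $\und x^{(n)}$ is immediate from the construction, and the containment of the disagreement set in a union of $H_n$-cosets is immediate because outside $\text{Per}_{H_n}(\und x)$ we have simply overwritten the values while inside we copied $\und x$ faithfully.

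The main obstacle, such as it is, is organizational rather than deep: one must be slightly careful that the extension step really produces a well-defined $H_n$-periodic configuration (this is automatic once a fundamental domain is fixed, since every $g\in G$ has a unique representative in $K_n$) and that the disagreement set is genuinely a union of cosets so that Corollary~\ref{**} applies to give it a Banach density. No genuine difficulty arises; the content of the lemma is essentially unwrapping the definition of regularity through the dictionary between $D_W$ and $D^*$ on subshifts.
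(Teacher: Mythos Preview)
Your proof is correct and follows essentially the same approach as the paper: build the periodic approximant $\und x^{(n)}$ by fixing a fundamental domain for $H_n$ and extending $H_n$-periodically, then bound the disagreement set by $G\setminus\text{Per}_{H_n}(\und x)$ and invoke regularity via Corollary~\ref{**} and Corollary~\ref{cor:uniformlyequiv}.

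One small remark: your caution about extracting a subsequence is unnecessary. A skeleton is by definition a \emph{nested} sequence $H_1\supset H_2\supset\ldots$, and $H_{n}\subset H_m$ implies $\text{Per}_{H_m}(\und x)\subset\text{Per}_{H_n}(\und x)$; hence $D^*(\text{Per}_{H_n}(\und x))$ is monotone in $n$ and the supremum is actually a limit, so the full sequence $\und x^{(n)}$ already converges quasi-uniformly to $\und x$.
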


\begin{proof}
Fix a regular Toeplitz configuration $\und x=\{x_g\}_{g\in G}$. Let $\{H_n\}_{n\in\N}$ be a skeleton of $\und x$.
Note that $\text{Per}_{H_m}(\und x)\subset\text{Per}_{H_n}(\und x)$ for $m\leq n$.
Therefore \[1=\sup_{m\geq 1}D^*(\text{Per}_{H_m}(\und x))=\lim\limits_{m\to\infty}D^*(\text{Per}_{H_m}(\und x)).\]
Let $F_n$ be a fundamental domain for $H_n$ in $G$. Define $\und x^{(n)}\in \alf^G$ as $\und x^{(n)}(H_nf)=\und x(f)$ for every $f\in F_n$.  
Then \[\{g\in G\,:\,\und x^{(n)}(g)\neq\und x(g)\}\subset G\setminus \text{Per}_{H_n}(\und x).\]
Thus $D^*(\und x^{(n)},\und x)\leq 1- D^*(\text{Per}_{H_n}(\und x))\to 0$ as $n\to\infty$.
\end{proof}

Since every periodic sequence $\und x\in \alf^G$  satisfies $h(\und x)=0$ and $|\M_G(\und x)|=1$, we get the following corollary of Lemma~\ref{entropiaciagla} and Theorem~\ref{thm:ciagwe'}:


\begin{cor}
If  $\und x\in \alf^G$ is a regular Toeplitz configuration, then the system $(\overline{G\und x},G)$ is uniquely ergodic and has zero topological entropy.
\end{cor}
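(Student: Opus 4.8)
The plan is to combine Lemma~\ref{sss} with the two continuity results already proved. By Lemma~\ref{sss} there is a sequence $\{\und x^{(n)}\}_{n\in\N}\subset\alf^G$ of periodic configurations with $D_W(\und x^{(n)},\und x)\to 0$ as $n\to\infty$. Every periodic configuration satisfies $h(\und x^{(n)})=\htop(\overline{G\und x^{(n)}},G)=0$ and, by Lemma~\ref{Mx}, $\M_G(\overline{G\und x^{(n)}})=\M_G(\und x^{(n)})$ is a one-point set. It therefore suffices to pass these two properties to the limit along the quasi-uniform convergence.

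For the entropy assertion, recall from Theorem~\ref{tog} that on $\alf^G$ the pseudometric $D_W$ is uniformly equivalent to $D^*$; hence $D^*(\und x^{(n)},\und x)\to 0$. Theorem~\ref{entropiaciagla} ($D^*$-continuity of $x\mapsto h(x)$ on $\alf^G$) then yields $h(\und x)=\lim_{n\to\infty}h(\und x^{(n)})=0$. (One could also argue directly that, since $h\ge 0$, the lower semicontinuity of $x\mapsto h(x)$ with respect to $D_W$ from Theorem~\ref{thm:pol} forces $h(\und x)\le\liminf_n h(\und x^{(n)})=0$.)

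For unique ergodicity, Theorem~\ref{thm:ciagwe'} gives that $x\mapsto\M_G(\overline{Gx})$ is uniformly continuous from $(\alf^G,D_W)$ to $(2^{\M(\alf^G)},\HD)$, so $\M_G(\overline{G\und x^{(n)}})\to\M_G(\overline{G\und x})$ in the Hausdorff metric. Since $\mu\mapsto\{\mu\}$ is an isometric embedding of the complete metric space $(\M(\alf^G),\di)$ into $(2^{\M(\alf^G)},\HD)$, its image (the family of singletons) is closed, and a Hausdorff limit of one-point sets is again a one-point set. Thus $\M_G(\overline{G\und x})$ is a singleton, i.e. $(\overline{G\und x},G)$ is uniquely ergodic.

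There is no substantial obstacle: the corollary is an assembly of Lemma~\ref{sss}, Theorem~\ref{tog}, Lemma~\ref{Mx}, Theorem~\ref{entropiaciagla} and Theorem~\ref{thm:ciagwe'}. The only step requiring a line of justification is the elementary observation that a Hausdorff-metric limit of singletons is a singleton; all the genuine work has already been done in establishing the continuity statements and the approximation Lemma~\ref{sss}.
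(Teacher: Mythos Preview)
Your proof is correct and follows exactly the approach the paper intends: the paper states this corollary immediately after Lemma~\ref{sss} as a consequence of Theorem~\ref{entropiaciagla} and Theorem~\ref{thm:ciagwe'}, and you have simply spelled out the (easy) details the paper leaves implicit, including the observation that a Hausdorff limit of singletons is a singleton.
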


We need the following lemma which is a reformulation of \cite[Lemma 4]{Cortez} (note that the condition~(\ref{problem}) is slightly different in our case as the authors of~\cite{Cortez} consider right 
F{\o}lner sequences and we use left ones).
\begin{lem}\label{Cortez}
If $G$ is an amenable residually finite group then there exist
a sequence $\{H_n\}_{n=0}^{\infty}$ with $H_n\lhd_f G$ and  a F{\o}lner sequence $\{F_n\}_{n\in\N}$ satisfying:
\begin{enumerate}[leftmargin=2.5em]
\item $\displaystyle G=H_0\supset H_1\supset H_2\supset\ldots\text{ and }\bigcap_{n=0}^{\infty}H_n=\{e\},$
\item $\displaystyle\{e\}=F_0\subset F_1\subset F_2\subset\ldots\text{ and }\bigcup_{n=0}^{\infty}F_n=G,$
\item for each $n\in\N$ the set $F_n$ is a fundamental domain for $G/H_{n}$,
\item\label{problem} $\displaystyle F_{i+1}=\bigsqcup_{v\in F_{i+1}\cap H_{i}}F_iv\text{ for every }i\in\N$,
\end{enumerate}
\end{lem}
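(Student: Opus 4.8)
### Proof proposal for Lemma~\ref{Cortez}

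The plan is to build the two sequences $\{H_n\}$ and $\{F_n\}$ simultaneously by induction, using the residual finiteness of $G$ to choose the subgroups and a greedy ``tiling'' procedure to enlarge the fundamental domains compatibly. First I would fix an enumeration $g_1, g_2, \dots$ of $G$ and a left \Folner sequence $\{E_n\}_{n\in\N}$ witnessing amenability. I set $H_0 = G$ and $F_0 = \{e\}$. Suppose $H_0 \supset \dots \supset H_i$ (all normal of finite index) and fundamental domains $F_0 \subset \dots \subset F_i$ with $F_k$ a fundamental domain for $G/H_k$ and property~(\ref{problem}) holding up to level $i$ have been constructed. Since $\bigcap\{N : N \triangleleft_f G\} = \{e\}$ (residual finiteness, using that a nested witnessing sequence exists and intersecting with $H_i$ keeps finite index), I can pick $H_{i+1} \triangleleft_f G$ with $H_{i+1} \subset H_i$, with $g_{i+1} \notin H_{i+1}$ if $g_{i+1}\neq e$ (to force $\bigcap H_n = \{e\}$), and — this is the key choice — with the index $[H_i : H_{i+1}]$ large enough and $H_{i+1}$ ``spread out'' enough that $F_i$ is a set of distinct coset representatives for $H_{i+1}$ inside $H_i$ (i.e.\ the sets $\{H_{i+1}v : v \in F_i\}$ are disjoint) and moreover a \Folner-type estimate holds; more on this below.

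The construction of $F_{i+1}$ from $F_i$ then goes as follows. The group $H_i/H_{i+1}$ is finite; choose representatives $v_1 = e, v_2, \dots, v_r \in H_i$ for its cosets, and declare $F_{i+1}^{(0)} = \bigsqcup_{j=1}^{r} F_i v_j$. I must check this is a fundamental domain for $G/H_{i+1}$: given $g \in G$, write $gH_i = wH_i$ uniquely with $w\in F_i$, so $gw^{-1}\in H_i$, hence $gw^{-1}H_{i+1} = v_j H_{i+1}$ for a unique $j$, giving $g \in H_{i+1} v_j w$ with $v_j w$ the unique element of $F_{i+1}^{(0)}$ in the coset. Disjointness of the $F_i v_j$ is exactly the spread-out condition imposed on $H_{i+1}$, and property~(\ref{problem}) holds by construction since $F_{i+1}^{(0)} = \bigsqcup_{v \in \{v_1,\dots,v_r\}} F_i v$ and $\{v_1,\dots,v_r\}$ can be taken to equal $F_{i+1}^{(0)} \cap H_i$ (each $v_j \in H_i$, and conversely any element of $F_{i+1}^{(0)}\cap H_i$ lies in some $F_i v_j \cap H_i = \{v_j\}$ when $F_i\cap H_i = \{e\}$, which we also maintain inductively). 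Finally, to guarantee the \Folner property of $\{F_n\}$ I would, when choosing $H_{i+1}$, insist additionally that a fundamental domain can be selected which is $(K_i, 1/i)$-invariant for $K_i = \{g_1^{\pm 1}, \dots, g_i^{\pm 1}\}$; this is possible by a standard fact (for a residually finite amenable group one can find fundamental domains of finite-index subgroups that are as invariant as desired — e.g.\ by a marker/tiling argument, or by citing the relevant statement from \cite{Cortez, CortezPetite}), and then one arranges that $F_{i+1}^{(0)}$ refines to such a set while retaining~(\ref{problem}).

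I expect the main obstacle to be this last point: reconciling the rigid recursive shape~(\ref{problem}) — $F_{i+1}$ must be an exact union of left translates of $F_i$ by elements of $H_i$ — with the requirement that $\{F_n\}$ be \Folner. One cannot choose $F_{i+1}$ freely; it is forced to be $\bigsqcup_{v \in A_i} F_i v$ for some transversal $A_i \subset H_i$ of $H_i / H_{i+1}$, so the invariance of $F_{i+1}$ is controlled by the invariance of $A_i$ viewed inside the group $H_i$ (with its own induced \Folner structure) together with the invariance of $F_i$. The clean way to handle this is to run the induction so that $A_i$ is chosen to be a highly invariant fundamental domain for $H_{i+1}$ in $H_i$ — which exists by amenability of $H_i$ and residual finiteness, since $H_{i+1} \triangleleft_f H_i$ — and to note that a union of translates of an invariant set $F_i$ along an invariant set $A_i$ is again invariant; quantitatively, $|K F_{i+1} \triangle F_{i+1}| \le \sum_{v\in A_i}|KF_i v\triangle F_i v| + |F_i|\cdot|(\text{boundary terms for }A_i)|$, and both pieces are small relative to $|F_{i+1}| = |F_i|\,|A_i|$ once $F_i$ is $(K,\eps)$-invariant and $A_i$ is sufficiently invariant in $H_i$. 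The remaining verifications — that $\bigcup F_n = G$ (every $g$ eventually lies in some $F_n$ because the levels exhaust $G$ via the coset structure and the enumeration) and $\bigcap H_n = \{e\}$ (from the $g_{i+1}\notin H_{i+1}$ condition) — are then routine. This is essentially the content of \cite[Lemma 4]{Cortez}; the only adjustment is the switch from right to left \Folner sequences, which changes~(\ref{problem}) to the stated left-translate form but does not affect the argument.
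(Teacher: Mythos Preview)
The paper does not prove this lemma: it is stated without argument as a reformulation of \cite[Lemma~4]{Cortez}, with only the remark that condition~(\ref{problem}) differs because the paper uses left rather than right F{\o}lner sequences. There is thus no proof in the paper to compare against; your proposal is an attempt to supply the argument that the paper outsources, and you yourself close by citing the same reference.

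Your outline is along the right lines, but one step you call routine actually needs care. Requiring $g_{i+1}\notin H_{i+1}$ secures $\bigcap_n H_n=\{e\}$ but does nothing for $\bigcup_n F_n=G$: for $G=\Z$, $H_n=2^n\Z$, $F_n=\{0,1,\dots,2^n-1\}$ one has $\bigcap H_n=\{0\}$ yet $\bigcup F_n\neq\Z$. To force the exhaustion you must, when choosing $H_{i+1}$ and the transversal $A_i\subset H_i$, also arrange that $g_{i+1}\in F_{i+1}$; concretely, pick $H_{i+1}$ small enough that $f^{-1}g_{i+1}\notin H_{i+1}$ for every $f\in F_i\setminus\{g_{i+1}\}$, and then include the element $v=f^{-1}g_{i+1}\in H_i$ (with $f\in F_i$ the representative of $H_ig_{i+1}$) in $A_i$, so that $g_{i+1}=fv\in F_iA_i=F_{i+1}$. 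A minor aside: the ``spread out'' condition you impose on $H_{i+1}$ is unnecessary, since $F_i$ being a fundamental domain for $H_i\supset H_{i+1}$ already forces the cosets $H_{i+1}v$ for $v\in F_i$ to be pairwise disjoint, and likewise the sets $F_iv_j$ are automatically disjoint.
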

%
To the end of the paper we fix a F{\o}lner sequence $\{F_n\}_{n\in\N}$ which satisfies the conditions of Lemma~\ref{Cortez}.



\begin{rem}
If $\und x\in\alf^G$ is Toeplitz and $f\colon\hat G\to\alf$ is as in Proposition~\ref{haar}, then $\und x$ is regular if and only if $\hat\mu(U(f))=1$. To see this note that \begin{equation*}\hat\mu(U(f))=\lim\limits_{k\to\infty}|\{P\in\Pe_k\,:\,P\subset U(f)\}|/|F_k|=\lim\limits_{k\to\infty}D^*(\text{Per}_{H_k}(\und x)),\end{equation*}
where the last equality follows from Lemma~\ref{sksksk}.
\end{rem}

\subsection{Path Connectedness}
Recall that a set $A$ is \emph{path-connected} if for all $x,y\in A$ there exists a continuous function $f\colon [0,1]\to A$ such that $f(0)=x$ and $f(1)=y$.
We prove that the family of all Toeplitz configurations is path-connected. 

\begin{lem}\label{lematD}
If $G$ is an amenable residually finite group, then there exists a function $\Psi\colon[0,1]\to\{0,1\}^G$ such that $\Psi(0)=0^G$, $\Psi(1)=1^G$ and for every $s,t\in[0,1],$ $\Psi(s),\Psi(t)$ are Toeplitz configurations satisfying $D^*(\Psi(s),\Psi(t))\leq|s-t|$.
\end{lem}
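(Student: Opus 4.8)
The plan is to build $\Psi$ by a "digit-by-digit" construction along the fixed nested sequence $\{H_n\}_{n\in\N}$ and \Folner sequence $\{F_n\}_{n\in\N}$ from Lemma~\ref{Cortez}, using the binary expansion of the parameter $t\in[0,1]$. Write $t=\sum_{k\geq 1} b_k(t)2^{-k}$ with $b_k(t)\in\{0,1\}$ (choosing, say, the expansion that is eventually $0$ when there is a choice). The idea is that the $k$-th binary digit of $t$ controls the value of $\Psi(t)$ on a set of cosets of $H_k$ whose Banach density is exactly $2^{-k}$, so that changing $t$ by less than $2^{-k}$ can only affect digits of index $>k$, hence can change $\Psi(t)$ only on a set of density at most $\sum_{j>k}2^{-j}=2^{-k}$.

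Concretely, I would first use condition (\ref{problem}) of Lemma~\ref{Cortez}, $F_{i+1}=\bigsqcup_{v\in F_{i+1}\cap H_i}F_iv$, to pick, for each $k\in\N$, a distinguished element $v_k\in F_{k}\cap H_{k-1}$ together with a nested decreasing family of subsets: let $A_k\subset F_k$ be defined recursively so that $A_k$ is a single $F_{k-1}$-translate sitting inside $F_k\setminus A_{k-1}$'s lift — more carefully, choose disjoint sets $B_k\subset F_k$, each a union of $F_{k-1}$-cosets of the form $F_{k-1}v$ with $v\in F_k\cap H_{k-1}$, such that $|B_k|=|F_k|/2^k$; this is possible because $|F_k|/|F_{k-1}|=|H_{k-1}:H_k|$ is an integer $\geq 2$ after passing to a subsequence of the $H_n$ (one may refine $\{H_n\}$ so that each index is even, or simply allow $|B_k|$ to be the nearest integer to $|F_k|/2^k$ and absorb the negligible error later — but I would rather refine the subgroup sequence so the counts are exact). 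Then let $P_k=H_k B_k$ be the corresponding union of $H_k$-cosets; by Lemma~\ref{sksksk}, $D^*(P_k)=D(P_k)=|B_k|/|F_k|=2^{-k}$, and the $P_k$ are pairwise disjoint (inheriting disjointness of the $B_k$ via the fundamental-domain property). Now define $\Psi(t)\in\{0,1\}^G$ by declaring $\Psi(t)(g)=1$ if $g\in P_k$ for some $k$ with $b_k(t)=1$, and $\Psi(t)(g)=0$ otherwise. Equivalently $\{g:\Psi(t)(g)=1\}=\bigsqcup_{k:\,b_k(t)=1}P_k$. Then $\Psi(0)=0^G$, $\Psi(1)=1^G$ (here $1=0.111\ldots$, but to get $\Psi(1)=1^G$ one should instead use the expansion $1=\sum 2^{-k}$, which is consistent), and each $\Psi(t)$ is periodic on each $P_k$, hence constant on each $H_k$-coset making up $P_k$, so $\Psi(t)$ is a Toeplitz configuration with respect to $\{H_n\}$ (every $g$ lies in some $P_k$ or in the "leftover" set $G\setminus\bigsqcup P_k$ on which $\Psi(t)$ is $0$; the leftover set is also a union of $H_n$-cosets for appropriate $n$ since each $F_n\setminus\bigsqcup_{k\leq n}B_k$ is a union of $F_{n-1}$-cosets — so $\Psi(t)$ restricted there is eventually periodic too).

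For the metric estimate: if $s,t\in[0,1]$ with $|s-t|<2^{-k}$, then $b_j(s)=b_j(t)$ for all $j\leq k$ except possibly in the carry-over borderline case, which only shifts the discrepancy to higher digits; in any case $\{g:\Psi(s)(g)\neq\Psi(t)(g)\}\subset\bigsqcup_{j>k}P_j$ (a careful treatment of dyadic rationals, where one must either fix a convention or note that $\Psi$ is well-defined because both expansions give configurations differing on a density-zero set — actually differing on $\bigsqcup_{j>k}P_j$ which has density $\sum_{j>k}2^{-j}$, so one must be slightly careful and it is cleanest to simply fix the "finite expansion when possible" convention and check continuity directly). Then by Lemma~\ref{sksksk} and countable subadditivity of $D^*$ on disjoint unions of sets having Banach density (which holds here because finite unions are exact and the tails vanish), $D^*(\Psi(s),\Psi(t))=D^*\big(\bigsqcup_{j>k}P_j\big)\leq\sum_{j>k}2^{-j}=2^{-k}$. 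Letting $k$ be the largest integer with $2^{-k}\geq|s-t|$, i.e.\ $2^{-k}\leq 2|s-t|$... — actually to land exactly at the bound $|s-t|$ rather than $2|s-t|$ one should instead index the blocks so that the digit $b_k$ corresponds to a density-$2^{-k}$ set and use that $|s-t|<2^{-k}\Rightarrow$ agreement on digits $\le k$ gives discrepancy $\le 2^{-k}$; optimizing over $k$ one gets $D^*(\Psi(s),\Psi(t))\le 2^{-\lfloor-\log_2|s-t|\rfloor}$, and a small reindexing (e.g.\ making $D(P_k)=2^{-k}$ but reading off digit $k$ from the expansion of $t$ shifted appropriately) yields the clean bound $\le|s-t|$. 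Continuity of $\Psi$ as a map $[0,1]\to(\{0,1\}^G,D^*)$ is immediate from this inequality.

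\textbf{Main obstacle.} The genuine difficulties are two bookkeeping points rather than conceptual ones. First, one must arrange the block sizes $|B_k|$ to realize Banach densities summing correctly, which forces either a refinement of $\{H_n\}$ to make all the relative indices divisible by $2$ (harmless: any skeleton can be thinned) or an error-tracking argument; the refinement route is cleaner. Second, and more delicate, is handling dyadic rationals and the non-uniqueness of binary expansions so that $\Psi$ is genuinely a \emph{function} and the inequality $D^*(\Psi(s),\Psi(t))\le|s-t|$ holds with the stated constant $1$ (not merely $2$); I expect to resolve this by fixing the convention that $t$ uses its terminating expansion when one exists, after which the containment $\{g:\Psi(s)(g)\ne\Psi(t)(g)\}\subset\bigsqcup_{j>k}P_j$ for $|s-t|<2^{-k}$ can be verified by a direct case analysis on carries, and the endpoint values $\Psi(0)=0^G$, $\Psi(1)=1^G$ checked by hand (for $t=1$ one simply sets all digits to $1$). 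Everything else — Toeplitzness, the density computations — is an immediate application of Lemma~\ref{sksksk} and the fundamental-domain tower structure of Lemma~\ref{Cortez}.
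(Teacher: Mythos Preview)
Your construction has a genuine gap, not a bookkeeping issue: the map $t\mapsto(b_k(t))_{k\geq 1}$ from a real number to its binary digits is discontinuous at every dyadic rational, and this discontinuity is inherited by your $\Psi$. Concretely, take $t=1/2$ (so with your terminating convention $b_1(t)=1$ and $b_j(t)=0$ for $j\geq 2$) and $s=1/2-2^{-N}$ for large $N$ (so $b_1(s)=0$, $b_2(s)=\cdots=b_N(s)=1$, and $b_j(s)=0$ for $j>N$). Then $\Psi(t)$ equals $1$ exactly on $P_1$, while $\Psi(s)$ equals $1$ exactly on $P_2\cup\cdots\cup P_N$; since the $P_k$ are pairwise disjoint, these configurations differ on $P_1\cup\cdots\cup P_N$, a set of Banach density $\sum_{k=1}^N 2^{-k}=1-2^{-N}$. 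Hence $D^*(\Psi(s),\Psi(t))=1-2^{-N}$ while $|s-t|=2^{-N}$, so your $\Psi$ is not even continuous at $1/2$. The assertion that $|s-t|<2^{-k}$ forces $b_j(s)=b_j(t)$ for all $j\leq k$ is simply false, and no convention or ``case analysis on carries'' can repair it: carries are precisely the mechanism by which an arbitrarily small change in $t$ flips arbitrarily many leading digits.

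The paper's construction avoids this by making the family $t\mapsto D(t):=\{g\in G:\Psi(t)(g)=1\}$ \emph{monotone increasing} in $t$. Rather than assigning blocks according to binary digits, it fixes once and for all an enumeration of the elements of each fundamental domain $F_n$ and, for a given $t$, declares $\Psi(t)=1$ on an \emph{initial segment} of that enumeration of the appropriate size, leaving at most one undecided coset to be refined at the next level. Monotonicity gives $D(s)\subseteq D(t)$ whenever $s<t$, and since each $D_n(t)$ is a union of cosets with Banach density tending to $t$ (via Lemma~\ref{sksksk}), the symmetric difference has density $t-s$. This threshold idea, not digit manipulation, is what delivers the Lipschitz constant~$1$; your block decomposition of $G$ into coset-pieces is perfectly good infrastructure, but the encoding of $t$ must be rebuilt along these monotone lines.
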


\begin{proof}
Fix $t\in[0,1]$. Let $k_0=|F_1|$. We define $\Psi(t)=\und x^{(t)}=\{x^{(t)}_g\}_{g\in G}$ inductively. Let 
$q_0:=\max\{0\leq l\leq k_0\,:\,l/k_0\leq t\}.$
Enumerate points in $F_1$ as $f_1^{(1)},\ldots, f^{(1)}_{k_0}$. 
Define\[D_1(t):=\bigcup_{j=1}^{q_0}f_j^{(1)}H_1\quad \text{ and } \quad E_1(t):=\bigcup_{j=r}^{k_0}f_j^{(1)}H_1, \text{ where } r= \left\{ \begin{array}{l} q_0+1 \text{ if } t=q_0/k_0, \\ q_0+2, \text{ otherwise.}\end{array}\right .\]

Note that:
\begin{inparaenum}
\item\label{1fff}  it is possible that $D_1(t)=\emptyset$ or $E_1(t)=\emptyset$,
\item\label{222} $D_1(t)\cup E_1(t)=G$ if and only if $t=q_0/k_0$,
\item $D_1(t)\cap E_1(t)=\emptyset$,
\item $|F_1\setminus (D_1(t)\cup E_1(t))|\leq 1$,
\item\label{5fff} $D_1(t)$ and $E_1(t)$ are union of cosets of $H_1$.
\end{inparaenum}
By (\ref{222}) above our construction is finished if $t=q_0/k_0$, otherwise we carry on.
Assume we have defined disjoint sets $D_n(t)$ and $E_n(t)$ for some $n\geq 1$ and $F_n\setminus (D_n(t)\cup E_n(t))=\{f^{(n)}_*\}$.
We enumerate points in the set \[F_{n+1}\setminus(D_n(t)\cup E_n(t))=\bigsqcup_{v\in H_n\cap F_{n+1}}\{vf^{(n)}_*\}\] as $f_1^{(n+1)},\ldots,f_{k_n}^{(n+1)}$.
Note that it follows from Lemma~\ref{sksksk} that $t-D^*(D_n(t))\geq 0$. Let $q_n:=\max\{0\leq l\leq k_n\,:\,l/k_n\leq t-D^*(D_n(t))\}$.
Define 
\begin{multline*}
D_{n+1}(t)=D_n(t)\cup\bigcup_{j=1}^{q_n}f_j^{(n+1)}H_{n+1},\quad
E_{n+1}(t)=E_n(t)\cup\bigcup_{j=r}^{k_n}f_j^{(n+1)}H_{n+1}, \\ \text{ where }
 r=\left\{ \begin{array}{l} q_n+1 \text{ if }D^*(D_n(t))+q_n/k_n=t \\ q_n+2, \text{ otherwise.}\end{array}\right.
.\end{multline*}
 Note that statements analogous to (\ref{1fff})--(\ref{5fff}) above hold for $D_{n+1}(t)$ and $E_{n+1}(t)$.

At the end we have defined (possibly finite) sequences $D_1(t)\subset D_2(t)\subset\ldots$ and $E_1(t)\subset E_2(t)\subset\ldots$ such that $G=D(t)\cup E(t)$, where $D(t)=D_1(t)\cup D_2(t)\cup\ldots$ and $E(t)=E_1(t)\cup E_2(t)\cup\ldots$. Clearly $D(t)$ and $E(t)$ are unions of cosets and are disjoint. Therefore $\und x^{(t)}=\{x_g^{(t)}\}_{g\in G}$ given by 
 \begin{equation*}
x^{(t)}_g=\left\{\begin{array}{lll} 1 &\text{ if }&g\in D(t),\\
0&\text{ if }&g\in E(t).
\end{array}\right.
\end{equation*}
is a Toeplitz configuration. Hence the function $\Psi(t)=\und x^{(t)}$ satisfies the first two claims.
To justify the third claim fix $0\leq s<t\leq 1$ and for each $n\geq1$ construct $D_n(s)\subseteq D_n(t)$ as described above. 
Then $D^*\left(\und x^{(s)},\und x^{(t)}\right)=D^*(\Gamma)$, where $\Gamma:=\{g\in G\,:\,x^{(s)}_g\neq x^{(t)}_g\}$, and 
\[
D^*(\Gamma)=\lim\limits_{n\to\infty}\sup_{g\in G} D_{F_ng}(\Gamma)\leq\lim\limits_{n\to\infty} D^*\big(D_n(t)\setminus D_n(s)\big)+1/|F_n|=t-s.\qedhere
\]
\end{proof}
Theorem~\ref{connected2} extends \cite[Proposition 6]{DI} where the analogous statement is proved for Toeplitz configurations indexed by the group of integers.
\begin{thm}\label{connected2}
The family of Toeplitz configurations is $D_W$-path-connected.
\end{thm}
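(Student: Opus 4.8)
The plan is to reduce the assertion to the single statement that every Toeplitz configuration can be joined, by a $D_W$‑continuous path of Toeplitz configurations, to a fixed constant configuration; granting that, if $\und x,\und y\in\alf^G$ are Toeplitz, then fixing $a_0\in\alf$ and concatenating a path from $\und x$ to $a_0^G$ with the reverse of a path from $\und y$ to $a_0^G$ produces the required path from $\und x$ to $\und y$.

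To join an arbitrary Toeplitz configuration $\und x$ to $a_0^G$, I would use the path $\Psi\colon[0,1]\to\{0,1\}^G$ provided by Lemma~\ref{lematD} as a ``mask''. Define
\[
\Phi(t)(g)=\und x(g)\ \text{ if }\ \Psi(t)(g)=0,\qquad \Phi(t)(g)=a_0\ \text{ if }\ \Psi(t)(g)=1 .
\]
The first step is to check that each $\Phi(t)$ is again a Toeplitz configuration: given $g\in G$, choose $H,H'<_f G$ with $g\in\text{Per}_H(\und x)$ and $g\in\text{Per}_{H'}(\Psi(t))$; then both $\und x$ and $\Psi(t)$ are constant on $(H\cap H')g$, hence so is $\Phi(t)$, i.e. $g\in\text{Per}_{H\cap H'}(\Phi(t))$ with $H\cap H'<_f G$, so $\bigcup\{\text{Per}_K(\Phi(t))\colon K<_f G\}=G$. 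Next, $\Phi(0)=\und x$ since $\Psi(0)=0^G$, and $\Phi(1)=a_0^G$ since $\Psi(1)=1^G$. Finally, if $\Phi(s)(g)\neq\Phi(t)(g)$ then necessarily $\Psi(s)(g)\neq\Psi(t)(g)$, so the set on which $\Phi(s)$ and $\Phi(t)$ differ is contained in the set on which $\Psi(s)$ and $\Psi(t)$ differ; by monotonicity of the upper Banach density and Lemma~\ref{lematD},
\[
D^*\big(\Phi(s),\Phi(t)\big)\le D^*\big(\Psi(s),\Psi(t)\big)\le|s-t| .
\]
By Theorem~\ref{tog}, on $\alf^G$ the Weyl pseudometric $D_W$ is uniformly equivalent to $D^*(\cdot,\cdot)$, so this Lipschitz estimate in $D^*$ upgrades to $D_W$‑continuity of $t\mapsto\Phi(t)$, and $\Phi$ is the desired path.

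I expect the verification above to be entirely routine; the only genuinely nontrivial point is recognizing the right operation. It is tempting to try to interpolate between two Toeplitz configurations ``coset by coset'' along a common period structure, or to pass through periodic approximations, but these approaches meet real difficulties for Toeplitz configurations that are \emph{not} regular, where $\text{Per}_{H_n}(\und x)$ need not have upper Banach density tending to $1$ and $\und x$ is not even a $D_W$‑limit of periodic sequences. The masking construction sidesteps this completely: the only property of $\und x$ that is used is that it is Toeplitz (solely to keep $\Phi(t)$ Toeplitz, via a common period refinement with the Toeplitz mask $\Psi(t)$), with no hypothesis on the densities of its period sets, and the Lipschitz bound is inherited from Lemma~\ref{lematD} for free because masking can only shrink the set of disagreement. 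The remaining ingredient, the equivalence of $D_W$ with $D^*$ on shift spaces, is already recorded in Theorem~\ref{tog}.
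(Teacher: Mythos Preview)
Your proof is correct and follows essentially the same idea as the paper: use the mask $\Psi(t)$ from Lemma~\ref{lematD} to switch coordinates between two configurations, verify that the result stays Toeplitz by intersecting periods, and inherit the Lipschitz bound in $D^*$ because masking only shrinks the disagreement set. The only cosmetic difference is that the paper interpolates directly between two Toeplitz configurations $\und z,\und z'$ via $u^{(t)}_g=x^{(t)}_g z_g+(1-x^{(t)}_g)z'_g$, whereas you route through the constant $a_0^G$ and concatenate; your version is simply the special case $\und z'=a_0^G$ applied twice.
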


\begin{proof}
Let $\alf\subset\mathbb R$ be a finite set.
Pick two Toeplitz configurations $\und z=\{z_g\}_{g\in G}$ and $\und z'=\{z'_g\}_{g\in G}\in \alf^G$. 
We will construct a path $\{u^{(t)}\,:\,t\in[0,1]\}\subset \alf^G$ connecting $\und z$ with $\und z'$.
Let $\Psi\colon[0,1]\to\{0,1\}^G$ and $\und x^{(t)}$ for $t\in[0,1]$ be defined as in Lemma~\ref{lematD}. 
For $t\in[0,1]$ define $\und u^{(t)}=\{u^{(t)}_g\}_{g\in G}$ as ${u}^{(t)}_g=
x^{(t)}_g\und{z}_g+(1-x^{(t)}_g) \und{z}'_g.$
Clearly the alphabet over which $\und u^{(t)}$ is defined does not depend on $t$,  $D^*(\und u^{(s)},\und u^{(t)})\leq D^*(\und x^{(s)},\und x^{(t)})$ and $D^*(\und x^{(s)},\und x^{(t)})\to 0\text{ as }s\to t.$ Fix $t\in[0,1]$. We will show that $\und u^{(t)}$ is a Toeplitz configuration. To this end fix $g\in G$. Let $H<_fG$ be such that $|G:H|<\infty$ and $g\in\text{Per}_{H}(\und x^{(t)})\cap \text{Per}_{H}(\und z)\cap \text{Per}_{H}(\und z')$ ($H$ is the intersection of periods of $g$ for $\und x^{(t)}$, $\und z$ and $\und z'$). Then for every $h\in H$ one has \[{u}^{(t)}_{gh}=
x^{(t)}_{gh} \und{z}_{gh} +(1-x^{(t)}_{gh}) \und{z}'_{gh}=x^{(t)}_g \und{z}_g+(1-x^{(t)}_g) \und{z}'_g={u}^{(t)}_g\]
and so $g\in\text{Per}_H(\und u^{(t)})$. This finishes the proof.
\end{proof}

From the above proof one can also see that:

\begin{thm}\label{connected}
The space of all Toeplitz configurations with respect to a fixed nested sequence $\{H_n\}_{n\in\N}$ is path-connected with respect to $D_W$.
\end{thm}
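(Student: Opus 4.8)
The plan is to reuse the path built in the proof of Theorem~\ref{connected2} and to check that, if its two endpoints are Toeplitz with respect to the fixed sequence $\{H_n\}_{n\in\N}$, then so is every point of the path. So I would fix two Toeplitz configurations $\und z=\{z_g\}_{g\in G}$ and $\und z'=\{z'_g\}_{g\in G}$ with respect to $\{H_n\}_{n\in\N}$, place them (as in Theorem~\ref{connected2}) into a common space $\alf^G$ with $\alf\subset\R$ finite, and for $\Psi$ and $\und x^{(t)}=\Psi(t)$ as in Lemma~\ref{lematD} define $u^{(t)}_g=x^{(t)}_g z_g+(1-x^{(t)}_g)z'_g$. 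Exactly as in Theorem~\ref{connected2} one has $\und u^{(0)}=\und z'$, $\und u^{(1)}=\und z$, and $D^*(\und u^{(s)},\und u^{(t)})\le D^*(\und x^{(s)},\und x^{(t)})\le|s-t|$, so by Corollary~\ref{cor:uniformlyequiv} the map $t\mapsto\und u^{(t)}$ is $D_W$-continuous; the only new point is that each $\und u^{(t)}$ is Toeplitz with respect to the same $\{H_n\}_{n\in\N}$.

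First I would observe that $\und x^{(t)}$ itself is Toeplitz with respect to $\{H_n\}_{n\in\N}$. In the notation of Lemma~\ref{lematD} one has $D(t)=\bigcup_n D_n(t)$, $E(t)=\bigcup_n E_n(t)$, $D(t)\sqcup E(t)=G$, and each $D_n(t)$, $E_n(t)$ is a union of cosets of $H_n$. Since the $H_n$ are normal, $g\in D_n(t)$ gives $H_ng=gH_n\subset D_n(t)$, so $\und x^{(t)}$ is identically $1$ on $H_ng$ and hence $g\in\Per_{H_n}(\und x^{(t)})$; the same argument applies to $E_n(t)$. Thus $\bigcup_n\Per_{H_n}(\und x^{(t)})=D(t)\cup E(t)=G$.

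Next, for a fixed $t$ and $g\in G$, I would use that $\und z$, $\und z'$ and $\und x^{(t)}$ are all Toeplitz with respect to the \emph{nested} sequence $\{H_n\}_{n\in\N}$, together with $\Per_{H_m}(\cdot)\subset\Per_{H_n}(\cdot)$ for $m\le n$, to select a single index $n$ with $g\in\Per_{H_n}(\und z)\cap\Per_{H_n}(\und z')\cap\Per_{H_n}(\und x^{(t)})$. Then for every $\gamma\in H_n$,
\[
u^{(t)}_{\gamma g}=x^{(t)}_{\gamma g}z_{\gamma g}+(1-x^{(t)}_{\gamma g})z'_{\gamma g}=x^{(t)}_g z_g+(1-x^{(t)}_g)z'_g=u^{(t)}_g,
\]
so $g\in\Per_{H_n}(\und u^{(t)})$, whence $\bigcup_n\Per_{H_n}(\und u^{(t)})=G$. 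Therefore $\und u^{(t)}$ is Toeplitz with respect to $\{H_n\}_{n\in\N}$, and $t\mapsto\und u^{(t)}$ is a $D_W$-path inside the prescribed space joining $\und z'$ to $\und z$.

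I do not expect any genuine obstacle here: the statement is essentially a corollary of the construction in Theorem~\ref{connected2}. The only thing requiring care is the bookkeeping: confirming that the inductive step of Lemma~\ref{lematD} fills in the configuration using precisely cosets of the subgroups $H_n$ (so that $\und x^{(t)}$ lands in the prescribed space), and aligning the periodicity levels of the three configurations $\und z$, $\und z'$, $\und x^{(t)}$ at one common index $n$ by the monotonicity of $\Per_{H_n}$.
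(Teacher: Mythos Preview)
Your proposal is correct and follows exactly the approach the paper intends: the paper's own ``proof'' of Theorem~\ref{connected} is literally the sentence ``From the above proof one can also see that'', referring to Theorem~\ref{connected2}, and you have spelled out precisely the details this sentence leaves implicit---namely that the auxiliary configuration $\und x^{(t)}$ from Lemma~\ref{lematD} is itself Toeplitz with respect to the fixed $\{H_n\}$ (because each $D_n(t)$, $E_n(t)$ is a union of cosets of the normal subgroup $H_n$), and that one can then align the periodicity levels of $\und z$, $\und z'$, $\und x^{(t)}$ at a common index using the monotonicity $\Per_{H_m}\subset\Per_{H_n}$ for $m\le n$.
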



\subsection{The Proof of the Krieger Theorem}
The above considerations lead to the alternative proof of the Krieger theorem \cite[Theorem 1.1.]{KriegerFR}.

\begin{thm}
Let $G$ be a countable amenable residually finite group and $\alf$ be a finite set. Then  for every number $h\in[0,\log |\alf|)$ there exists a Toeplitz configuration $\eta\in \alf^G$ such that $h(\eta)=h$.
\end{thm}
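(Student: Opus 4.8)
The plan is to combine the path-connectedness of the space of Toeplitz configurations with respect to a fixed nested sequence $\{H_n\}_{n\in\N}$ (Theorem~\ref{connected}), the lower semicontinuity of entropy in $D_W$ (Theorem~\ref{thm:pol}), the $D^*$-continuity of entropy on $\alf^G$ (Theorem~\ref{entropiaciagla}), and an intermediate value argument. First I would construct, for a fixed alphabet $\alf$ with $|\alf|=k$, a \emph{single} Toeplitz configuration $\und y\in\alf^G$ (with respect to the sequence $\{H_n\}_{n\in\N}$ from Lemma~\ref{Cortez}) whose orbit closure has entropy as close to $\log k$ as we wish --- ideally satisfying $h(\und y)>h$. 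The idea is to reuse the construction of Lemma~\ref{lematD}: instead of filling the ``free'' coordinates $f^{(n)}_1,\dots,f^{(n)}_{k_n}$ in $F_{n+1}\setminus(D_n\cup E_n)$ with a deterministic split into $0$'s and $1$'s, one leaves a controlled positive proportion of these coordinates genuinely free, i.e. one builds a Toeplitz configuration whose ``holes'' (the complement of $\bigcup_n \mathrm{Per}_{H_n}(\und y)$ — wait, these must be empty for a Toeplitz configuration) — more precisely one arranges that the periodic part covers only a density-$(1-\theta)$ subset at each finite stage while still exhausting $G$ in the limit, and the entropy of the resulting subshift is then at least $\theta\log k$, by a standard counting estimate: the number of $F_n$-patterns is at least $k^{\,\theta|F_n|(1-o(1))}$.

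**Main construction.**
Concretely, I would take $\eta_1$ a periodic configuration (entropy $0$) and $\eta_0=\und y$ the high-entropy Toeplitz configuration just described, both Toeplitz with respect to $\{H_n\}_{n\in\N}$, and apply Theorem~\ref{connected} to obtain a $D_W$-continuous path $\Psi\colon[0,1]\to\alf^G$ of Toeplitz configurations with $\Psi(0)=\eta_0$, $\Psi(1)=\eta_1$. The real-valued function $t\mapsto h(\Psi(t))=\htop(\overline{G\Psi(t)},G)$ is then continuous: on $\alf^G$, $D_W$ is uniformly equivalent to $D^*$ (Corollary~\ref{cor:uniformlyequiv} / Theorem~\ref{tog}), and $h$ is $D^*$-continuous by Theorem~\ref{entropiaciagla}, so $t\mapsto h(\Psi(t))$ is continuous as a composition. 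Since $h(\Psi(1))=0$ and $h(\Psi(0))>h$ (once $\theta$ is chosen so that $\theta\log k>h$), the intermediate value theorem yields $t_0\in[0,1]$ with $h(\Psi(t_0))=h$, and $\eta:=\Psi(t_0)$ is the desired Toeplitz configuration.

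**Main obstacle.**
The hard part is the first step: producing a Toeplitz configuration whose orbit-closure entropy is arbitrarily close to (or exceeds) $\log|\alf|$, \emph{while keeping it Toeplitz with respect to the prescribed sequence $\{H_n\}_{n\in\N}$ of Lemma~\ref{Cortez}} so that Theorem~\ref{connected} applies. One must interleave two competing requirements: on one hand, at stage $n$ the set of coordinates that have been ``frozen'' into the periodic structure must have density tending to $1$ slowly enough that a definite proportion $\theta$ of each $F_{n+1}$ is still free to carry an arbitrary $\alf$-valued pattern (forcing entropy $\ge\theta\log|\alf|$); on the other hand every coordinate must \emph{eventually} be assigned a value that is periodic under some $H_m$, so that the limiting configuration is genuinely Toeplitz. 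Balancing these — choosing at stage $n$ to freeze only a $(1-\theta)$-fraction of the currently free coordinates but to freeze \emph{all previously free} coordinates from earlier stages — together with the lower bound on the pattern count via Lemma~\ref{sksksk} (to control $D^*(\mathrm{Per}_{H_n})$) and a direct counting argument for $|\Bl_{F_n}(\und y)|$, is the technical heart of the proof; the rest is assembly of results already established.
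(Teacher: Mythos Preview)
Your proposal is correct and follows essentially the same route as the paper: reduce via Theorem~\ref{connected} (path-connectedness with respect to a fixed $\{H_n\}$) and Theorem~\ref{entropiaciagla} ($D^*$-continuity of entropy on $\alf^G$) plus the intermediate value theorem to the single task of building one Toeplitz configuration of entropy arbitrarily close to $\log|\alf|$, and you correctly identify this last construction as the technical heart. The paper's construction of the high-entropy configuration is more explicit than your sketch --- it passes to a sparse subsequence $\{F_{k_n}\}$ chosen so that $F_{k_{n+1}}$ contains at least $|\alf|^{|F_{k_n}|}$ disjoint translates of $F_{k_n}$, freezes at stage $n$ only $r_n=\lfloor(1-\gamma)|F_{k_n}|/2^n\rfloor$ new coset classes, and then fills the remaining translates of $F_{k_n}$ inside $F_{k_{n+1}}$ with \emph{every} possible pattern over the unfrozen part, giving directly $|\Bl_{F_{k_n}}(\eta)|\ge|\alf|^{\gamma|F_{k_n}|}$ --- but this is exactly the balancing act you describe in your ``main obstacle'' paragraph.
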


\begin{proof}
Fix $\gamma\in(0,1)$.
By Theorem~\ref{connected} and Theorem~\ref{entropiaciagla} it is enough to show that there is a Toeplitz configuration $\eta\in\alf^G$ with $h(\eta)\geq\gamma\log|\alf|$. 
Let $\{F_n\}_{n\in\N}$ be a \Folner sequence given by Lemma~\ref{Cortez}.
We define $\{k_n\}_{n\in\N}\subset\N$ inductively: $k_0=0$, given $k_n$ for some $n\in\N$ we pick $k_{n+1}$ such that $F_{k_{n+1}}$ contains at least $|\alf|^{|F_{k_n}|}$ copies of $F_{k_n}$. For any $n\in\N$ let $r_n=\lfloor (1-\gamma)|F_{k_n}|/2^n\rfloor$. Choose a sequence $\{G_n\}_{n\in\N}$ of subsets of $G$ such that:
\begin{inparaenum}
\item $G_n\subset F_{k_n}$ and $|G_n|=r_n$ for every $n\in\N$,
\item  $ G_i\cap\bigcup\{\tilde{g}^{(k_j)}\colon g\in G_j\} =\emptyset$ for $i\neq j,$
\item $\bigcup\{ \bigcup\{\tilde{g}^{(k_n)}\colon g\in G_n\}\colon n\in\N\}=G.$
\end{inparaenum}
 Now construct $\eta\in\alf^G$ as follows. 
 On $F_0$ let $\eta$ be arbitrary. For every $g\in G_0$ and $h\in \tilde{g}^{(k_0)}$ let $\eta(h)=\eta(g)$.
 Assume that for some $n\in\N$ we have defined $\eta$ on $ T_n$, where 
 \[T_n:=\bigcup_{j=0}^n\bigcup_{g\in G_j}\tilde{g}^{(k_j)}.\] 
This means that in $F_{k_n}$ there are $\sum_{i=0}^nr_i\cdot |H_{k_{n}}:H_{k_i}|$ elements over which $\eta$ is already constructed. 
Pick \[|\alf|^{|F_{k_n}|-\sum_{i=0}^nr_i\cdot |H_{k_{n}}:H_{k_i}|}\] copies of $S_n:=F_{k_n}\setminus T_n$ in $F_{k_{n+1}}$. 
On each of these copies define $\eta$ in a different way such that every configuration over $S_n$ can be found in $F_{k_{n+1}}$. 
This in particular means that 
\[|\mathcal B_{F_{k_n}}|\geq |\alf|^{|F_{k_n}|-\sum_{i=0}^nr_i\cdot |H_{k_{n}}:H_{k_i}|}\geq|\alf|^{\gamma|F_{k_n}|},\]
where the last inequality follows from the definition of $r_i$: \[\sum_{i=0}^nr_i\cdot |H_{k_{n}}:H_{k_i}|\leq \sum_{i=0}^n(1-\gamma)(|F_{k_i}|/2^i)(|F_{k_n}|/|F_{k_i}|)\leq(1-\gamma)|F_{k_n}|.\]
For every $g\in G_{n+1}$ and $h\in \tilde{g}^{(k_{n+1})}$ put $\eta(h)=\eta(g)$.

It is easy to see that $\eta$ is a Toeplitz configuration. Moreover, one has 
\[h(\eta)\geq\limsup\limits_{n\to\infty}\frac{1}{|F_{k_n}|}\log|\mathcal B_{F_{k_n}}|\geq \gamma\log|\alf|.\qedhere\]
\end{proof}
\section *{Acknowledgments}
We would like to thank our advisor Dominik Kwietniak for many helpful comments, constant support and encouragement. We are also grateful to Jakub Byszewski and Samuel Petite for their remarks and suggestions. 

The research of Marta Straszak was supported by the National
Science Centre (NCN) under grant no.
DEC-2012/07/E/ST1/00185.
Martha \L{}\k{a}cka acknowledges support of the National Science Centre (NCN), grant no. 2015/19/N/ST1/00872.

\def\cprime{$'$} \def\ocirc#1{\ifmmode\setbox0=\hbox{$#1$}\dimen0=\ht0
  \advance\dimen0 by1pt\rlap{\hbox to\wd0{\hss\raise\dimen0
  \hbox{\hskip.2em$\scriptscriptstyle\circ$}\hss}}#1\else {\accent"17 #1}\fi}
  \def\ocirc#1{\ifmmode\setbox0=\hbox{$#1$}\dimen0=\ht0 \advance\dimen0
  by1pt\rlap{\hbox to\wd0{\hss\raise\dimen0
  \hbox{\hskip.2em$\scriptscriptstyle\circ$}\hss}}#1\else {\accent"17 #1}\fi}

\end{document}